\documentclass[12pt, reqno]{amsart}
\usepackage{amssymb, mathrsfs}
\usepackage{latexsym}
\usepackage{amsmath}
\usepackage{amsthm}
\usepackage{amsfonts}
\usepackage{dsfont, upgreek}
\usepackage{mathtools}
\usepackage{epsfig}
\usepackage{amscd}
\usepackage{graphicx}
\usepackage{tikz-cd}
\usepackage{enumitem}
\setlist[enumerate]{itemsep=0.5ex}

\usepackage{comment}

\usepackage{color}
\usepackage{tikz}
\usetikzlibrary{patterns}

\usepackage[left=1.4in,right=1.4in,top=1.2in,bottom=1.2in]{geometry}

\usepackage{tikz}
\usetikzlibrary{decorations.markings}
\usetikzlibrary{arrows.meta}

\usepackage{extarrows}

\usepackage{adjustbox}
\usepackage{pgfplots}
\pgfplotsset{compat=1.18}
\usepgfplotslibrary{colormaps}
\usepackage{slashed}

\usepackage[colorlinks=true,linkcolor=blue,citecolor=blue]{hyperref}

\usepackage[colorinlistoftodos,prependcaption,textsize=tiny]{todonotes}  

\usepackage[all,cmtip]{xy}
\newenvironment{plainproof}
  {\par\noindent\ignorespaces}
  {\hfill\(\square\)\par}
  
\theoremstyle{plain}
\newtheorem{theorem}{Theorem}[section]
\newtheorem{proposition}[theorem]{Proposition}
\newtheorem{lemma}[theorem]{Lemma}
\newtheorem{corollary}[theorem]{Corollary}

\newtheorem{remark}[theorem]{Remark}
\newtheorem{definition}[theorem]{Definition}

\theoremstyle{definition}
\newtheorem{example}{Example}[section]

\numberwithin{equation}{section}
\newcommand{\E}{\mathcal{E}}

\newcommand{\supp}{\mathrm{supp}}

\newcommand{\interior}[1]{%
	{\kern0pt#1}^{\mathrm{\,o}}%
}

\makeatletter
\let\save@mathaccent\mathaccent
\newcommand*\if@single[3]{%
	\setbox0\hbox{${\mathaccent"0362{#1}}^H$}%
	\setbox2\hbox{${\mathaccent"0362{\kern0pt#1}}^H$}%
	\ifdim\ht0=\ht2 #3\else #2\fi
}
\newcommand*\rel@kern[1]{\kern#1\dimexpr\macc@kerna}
\newcommand*\overbar[1]{\@ifnextchar^{{\wide@bar{#1}{0}}}{\wide@bar{#1}{1}}}
\newcommand*\wide@bar[2]{\if@single{#1}{\wide@bar@{#1}{#2}{1}}{\wide@bar@{#1}{#2}{2}}}
\newcommand*\wide@bar@[3]{%
	\begingroup
	\def\mathaccent##1##2{%
		\let\mathaccent\save@mathaccent
		\if#32 \let\macc@nucleus\first@char \fi
		\setbox\z@\hbox{$\macc@style{\macc@nucleus}_{}$}%
		\setbox\tw@\hbox{$\macc@style{\macc@nucleus}{}_{}$}%
		\dimen@\wd\tw@
		\advance\dimen@-\wd\z@
		\divide\dimen@ 3
		\@tempdima\wd\tw@
		\advance\@tempdima-\scriptspace
		\divide\@tempdima 10
		\advance\dimen@-\@tempdima
		\ifdim\dimen@>\z@ \dimen@0pt\fi
		\rel@kern{0.6}\kern-\dimen@
		\if#31
		\overline{\rel@kern{-0.6}\kern\dimen@\macc@nucleus\rel@kern{0.4}\kern\dimen@}%
		\advance\dimen@0.4\dimexpr\macc@kerna
		\let\final@kern#2%
		\ifdim\dimen@<\z@ \let\final@kern1\fi
		\if\final@kern1 \kern-\dimen@\fi
		\else
		\overline{\rel@kern{-0.6}\kern\dimen@#1}%
		\fi
	}%
	\macc@depth\@ne
	\let\math@bgroup\@empty \let\math@egroup\macc@set@skewchar
	\mathsurround\z@ \frozen@everymath{\mathgroup\macc@group\relax}%
	\macc@set@skewchar\relax
	\let\mathaccentV\macc@nested@a
	\if#31
	\macc@nested@a\relax111{#1}%
	\else
	\def\gobble@till@marker##1\endmarker{}%
	\futurelet\first@char\gobble@till@marker#1\endmarker
	\ifcat\noexpand\first@char A\else
	\def\first@char{}%
	\fi
	\macc@nested@a\relax111{\first@char}%
	\fi
	\endgroup
}
\makeatother
\begin{document}

\title[Riemannian PMT With Low-Codimension Singularities]{Riemannian Positive Mass Theorem in All Dimensions in the Presence of Low-Codimension Singularities}
\author{Marcus Khuri}
\address[Marcus Khuri]{Department of Mathematics, Stony Brook University}
\email{marcus.khuri@stonybrook.edu}
\thanks{}
\author{Jian Wang}
\address[Jian Wang]{State Key Laboratory of Mathematical Sciences\\
Academy of Mathematics and Systems Science, Chinese Academy of Sciences \\
Beijing 100190, China}
\email{jian.wang.4@amss.ac.cn}
\thanks{}
\author{Jinmin Wang}
\address[Jinmin Wang]{State Key Laboratory of Mathematical Sciences\\
Academy of Mathematics and Systems Science, Chinese Academy of Sciences \\
Beijing 100190, China}
\email{jinmin@amss.ac.cn}

\date{}

\thanks{Marcus Khuri acknowledges the support of NSF Grant DMS-2405045. Jinmin Wang acknowledges the support of NSFC 12501169.}

\begin{abstract}
We prove the Riemannian positive mass theorem in all dimensions for
asymptotically flat \(L^\infty\)-metrics with subcritical singular sets.
More precisely, we consider complete asymptotically flat manifolds whose
metrics are smooth away from a compact singular set of Minkowski dimension
less than \(n-3+\frac{2}{n}\), and whose scalar curvature is nonnegative on the regular set.
We show that the ADM mass of each asymptotically flat end is nonnegative, and that
the mass vanishes in some end only in the Euclidean case. For the rigidity statement,
we require additionally that the Minkowski dimension of the singular set is not larger 
than $n-3+\frac{1}{n-1}$. This gives an asymptotically flat analogue
of Schoen's codimension-three conjecture for positive scalar curvature.  The
proof combines a density theorem for singular asymptotically flat metrics,
capacity estimates across the singular set, conformal blow-up inspired by 
Bi--Hao--He--Shi--Zhu \cite{BiHaoHeShiZhu26}, and a \(\mu\)-bubble dimension-descent 
argument adapted from Brendle--Wang \cite{BrendleWang2026}.
\end{abstract}
\maketitle

\section{Introduction}
\label{sec1}

The positive mass theorem is one of the central results in scalar curvature
geometry and mathematical relativity.  In its Riemannian form it asserts that a
complete asymptotically flat manifold with nonnegative scalar curvature has
nonnegative ADM mass, with equality only for Euclidean space.  The theorem was
proved by Schoen and Yau in dimensions \(3\leq n\leq 7\) using minimal
hypersurfaces \cite{SchoenYau1979PMT,SchoenYau1981PMTII}, and by Witten in
the spin case in all dimensions using the Dirac operator \cite{Witten1981}.
In the past few years there has been renewed interest in extending
minimal-hypersurface methods beyond the classical regularity range. In particular,
we mention the approaches of Schoen--Yau \cite{SchoenYau2019MinimalSingularities} and Lohkamp
\cite{Lohkamp2006,Lohkamp2016HigherDimensionalPMTII} for arbitrary dimensions, as well as the work of Smale \cite{Smale1993} and
Chodosh--Mantoulidis--Schulze \cite{ChodoshMantoulidisSchulze2023} who proved generic regularity results in
dimensions 8, respectively \(9\) and \(10\), and the extension of this by
Chodosh--Mantoulidis--Schulze--Wang \cite{ChodoshMantoulidisSchulzeWang2025} to dimension \(11\). 
Very recently, Bi--Hao--He--Shi--Zhu \cite{BiHaoHeShiZhu26} blew up the singular set by a harmonic conformal factor to extend the Riemannian positive mass theorem up to dimension $19$, and in a breakthrough Brendle--Wang \cite{BrendleWang2026} improved this to all dimensions by using a different conformal factor based on the distance function to the singular set and a new inductive dimensional-descent scheme. These latter two results make use of Lesourd--Unger--Yau's shielding principle \cite{LesourdUngerYau2024} to control the additionally created complete ends. Versions of the spacetime positive mass theorem were also recently established in all dimensions in both the asymptotically flat and asymptotically hyperboloidal contexts, see
\cite{BrendleWang2026Spacetime,HirschKhuriLesourdZhang2026,Tsang2026InitialDataArbitraryEnds}.

The purpose of this paper is to prove a singular version of the Riemannian
positive mass theorem in all dimensions for asymptotically flat metrics which are
only \(L^\infty\) across a codimension greater than $3-\frac{2}{n}$ singular set.  Thus, an
\(L^\infty\) singular set of subcritical dimension cannot be used to hide negative
mass.

\begin{theorem}\label{A}
Let \((M^n,g)\), \(n\geq 3\), be a complete asymptotically flat
\(L^\infty\)-manifold. Suppose that the singular set \(\mathcal S\) is
compact.

\medskip
\noindent\textbf{Inequality.}
Assume that
\begin{equation}
        \dim_{\mathcal M}\mathcal S < n-3+\frac{2}{n}.
\end{equation}
If the scalar curvature of \(g\) is nonnegative on
\(M^n\setminus\mathcal S\), then the ADM mass of each asymptotically flat end
is nonnegative.

\medskip
\noindent\textbf{Rigidity.}
Assume in addition that
\begin{equation}
        \dim_{\mathcal M}\mathcal S\leq n-3+\frac{1}{n-1}.
\end{equation}
If the ADM mass of some asymptotically flat end vanishes, then \(M^n\) has
only one end and there exists a homeomorphism $\Psi:M^n\to\mathbb R^n$
such that
\begin{equation}
        \Psi|_{M^n\setminus\mathcal S}:
        M^n\setminus\mathcal S
        \longrightarrow
        \mathbb R^n\setminus\Psi(\mathcal S)
\end{equation}
is a smooth Riemannian isometry. Furthermore, the metric completion of the length space $(M^n\setminus\mathcal S,d_g)$
is isometric, as a metric space, to Euclidean space \((\mathbb R^n,d_{\delta})\).
\end{theorem}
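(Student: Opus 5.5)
We argue by contradiction for the inequality, and first reduce to a smooth setting away from $\mathcal S$. Suppose some asymptotically flat end $E_0$ has negative ADM mass $m<0$. The first step is a density argument: we replace $g$ by a metric $\hat g$ that is harmonically flat near infinity in $E_0$ and has strictly positive scalar curvature near infinity. It should agree with $g$ up to a small $L^\infty$-perturbation supported away from $\mathcal S$, still have mass $\hat m<0$, and still have $R_{\hat g}\ge 0$ on $M\setminus\mathcal S$. To do this we solve a conformal equation $-c_n\Delta_g u + R_g^- u\cdot\chi=0$ on $M\setminus \mathcal S$ with $u\to1$ at infinity. The Minkowski-dimension bound $\dim_{\mathcal M}\mathcal S<n-2$ implies that $\mathcal S$ has zero $W^{1,2}$-capacity. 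Using cutoffs $\eta_\varepsilon$ with $\int|\nabla\eta_\varepsilon|^2\to0$, the weak solution then extends across $\mathcal S$ as a bounded positive function that is smooth off $\mathcal S$. The remaining ends are handled by the Lesourd--Unger--Yau shielding principle, so we never need to touch them.

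The core of the proof is to make $\mathcal S$ disappear into complete ends. Following Brendle--Wang, we use a conformal factor built from the distance to the singular set. Let $\rho$ be a regularized distance to $\mathcal S$ and set $\tilde g=\phi^{4/(n-2)}\hat g$ with $\phi=\rho^{-\beta}$ near $\mathcal S$ and $\phi\to1$ far away. We compute
\[
R_{\tilde g}=\phi^{-\frac{n+2}{n-2}}\bigl(-c_n\Delta\phi+R_{\hat g}\phi\bigr).
\]
The term $-\Delta\rho^{-\beta}$ is of size $\rho^{-\beta-2}$, but its sign is not controlled, because $\hat g$ is only $L^\infty$ and $|\nabla\rho|$, $\Delta\rho$ are uncontrolled. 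So we do not use a pointwise bound. Instead we average: we replace $\phi$ by a solution of $-c_n\Delta_{\hat g}\phi=f\ge 0$, where $f$ is concentrated on the shells $\{2^{-k-1}<\rho<2^{-k}\}$ (the harmonic/Green-type blow-up of Bi--Hao--He--Shi--Zhu). We need $\phi\to\infty$ at $\mathcal S$ while the total mass $\int f$ stays finite, so that the mass of $E_0$ changes by only an arbitrarily small amount and stays negative. Covering $\mathcal S$ by $N(r)\lesssim r^{-d}$ balls with $d=\dim_{\mathcal M}\mathcal S$ gives shell volumes $\sim r^{n-d}$. De Giorgi--Nash--Moser bounds for the $L^\infty$ metric then give $\phi\gtrsim\sum_k 2^{k(n-2-\cdot)}$-type growth. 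The resulting $\tilde g$ is complete near $\mathcal S$, and this is a case of the shielding principle, with the new ends lying inside a shield.

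The $\mu$-bubble step is what fixes the threshold $n-3+2/n$, and I expect it to be the main obstacle. The new ends created near $\mathcal S$ are not conformally flat, and the scalar curvature there is only nonnegative, not quantitatively positive. We must therefore run a Brendle--Wang-style dimension descent. In $(M,\tilde g)$ we construct $\mu$-bubbles separating $E_0$'s negative-mass region from the blown-up ends, using a warping/weight function. The accumulated errors from $\phi$ must be integrable along each descent step, where each step costs one dimension and an exponent $2/k$ from the conformal Laplacian at dimension $k$. Balancing these exponents is what should reproduce the condition $d<n-3+2/n$. At this step I would first check the Hölder/capacity estimate for $\phi$ on shells against the stability inequality of the $\mu$-bubble. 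Descending to dimension $3$ (or to the base case of Brendle--Wang) then yields a contradiction with $m<0$.

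Rigidity follows the standard route. If $m=0$ but $g$ is not flat on $M\setminus\mathcal S$ (or $M$ has more than one end), then a small conformal perturbation produces negative mass, contradicting the inequality. The stronger bound $d\le n-3+1/(n-1)$ is what makes this perturbation argument and its capacity estimates survive. In fact it gives $\mathcal S$ zero $W^{1,p}$-capacity for some $p>2$, so the Ricci-flow-free Green's function argument of Schoen–Yau applies to $\mathrm{Ric}$. This gives $\mathrm{Ric}=0$, and since $n\ge3$ the scalar-flat, Ricci-flat, asymptotically flat structure forces the metric on $M\setminus\mathcal S$ to be flat. Next we use the harmonic coordinates from the end, which extend across $\mathcal S$ because the codimension is greater than $2$ and bounded harmonic functions extend. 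They define a developing map $\Psi$ that is a local isometry off $\mathcal S$ and proper, hence a homeomorphism onto $\mathbb R^n$. Finally, because $\Psi(\mathcal S)$ has Hausdorff dimension less than $n-1$, it does not disconnect and almost every segment avoids it. So $d_g$ equals the Euclidean distance, and the metric completion is $(\mathbb R^n,d_\delta)$.
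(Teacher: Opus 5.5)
There is a genuine gap in your blow-up step, and that step is exactly where the threshold $n-3+\frac{2}{n}$ comes from.

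\textbf{The blow-up step.} You keep the scalar-curvature-preserving exponent, $\tilde g=\phi^{4/(n-2)}\hat g$ with $-\Delta\phi=f\ge 0$ and $\int f<\infty$. In the notation of Proposition~\ref{blow-up1} this is $g'=w^{2a}g$ with $a=\frac{2}{n-2}$. Pointwise blow-up of $\phi$ is not what matters. Completeness needs $\phi^{2/(n-2)}$ to be non-integrable along every curve ending on $\mathcal S$, and with this exponent that fails once $\mathcal S$ is large:
\begin{itemize}
\item The paper's construction requires $\sigma<n-2-a^{-1}$, which here reads $\dim\mathcal S<\frac{n-2}{2}$.
\item For the uniform measure on a $d$-dimensional piece, $\phi\sim d(\cdot,\mathcal S)^{2-n+d}$, so $\phi^{2/(n-2)}$ is integrable along normal rays once $d>\frac{n-2}{2}$.
\item More generally, for $n\ge 4$ and any finite $\nu=f\,dV$, integrate $\phi^{2/(n-2)}$ along rays issuing from a Frostman measure on $\mathcal S$. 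This shows that almost every such ray has finite $\tilde g$-length once $\dim_{\mathcal H}\mathcal S>n-3$.
\end{itemize}
So no choice of shells works in the admissible range.

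The missing idea is to give up $R\ge 0$. Take $a$ larger, in $(\frac{1}{n-2-\sigma},\frac{n}{n-2})$. Then the scalar curvature of $w^{2a}g$ acquires a negative multiple of $|\nabla\log w|^2$. Absorb it with a weight $\rho=w^b$, where $b$ is a root of the quadratic that kills the gradient term. This gives $R_{g'}-2\Delta_{g'}\log\rho-(\frac{n-1}{n}+\varepsilon)|\nabla\log\rho|_{g'}^2>0$.

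For $a>\frac{2}{n-2}$ and coefficient slightly above $\frac{n-1}{n}$, such a root exists exactly when $a<\frac{n}{n-2}$. Completeness needs $a>\frac{1}{n-2-\sigma}$. The two conditions are compatible precisely when $\sigma<n-3+\frac{2}{n}$. So the threshold is fixed at the first blow-up, not by balancing exponents in the $\mu$-bubble descent. The coefficient $\frac{n-1}{n}+\varepsilon$ is exactly what makes the first $\mu$-bubble a weak $(n-1)$-data set, after which the descent of Section~\ref{sec4} proceeds with no further restriction.

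Two further points on this step. The paper cuts $G_{\mathcal S}$ off by $\varphi$, so the end and its mass are unchanged; your global potential shifts the mass instead. The paper also needs $R>0$ on all of $M^n\setminus\mathcal S$, not only near infinity. This absorbs $\Delta(\varphi G_{\mathcal S})$ on $U_2\setminus U_1$ and makes $Q$ globally positive.

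\textbf{The rigidity part.} Your perturbation argument for $\operatorname{Ric}=0$ is the paper's (Lemma~\ref{Ricci-flatness}). It uses only the inequality, so the stronger bound plays no role there. The gap is the next step: for $n\ge 4$, Ricci-flatness does not imply flatness. Moreover, $\Psi=(y^1,\dots,y^n)$ is a local isometry only once $\nabla^2 y^i=0$.

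The paper proves this from the Bochner identity $\frac12\Delta|\nabla y^i|^2=|\nabla^2 y^i|^2$ integrated against cut-offs. The flux at infinity is a multiple of $c_i$, and $\sum_i c_i$ is a multiple of $m=0$. The inner boundary term at $\mathcal S$ must also vanish, even though $|\nabla y^i|$ may blow up there. That needs three ingredients:
\begin{itemize}
\item the bound $|\nabla y^i|\le C\,d_{g_0}(\cdot,\mathcal S)^{\gamma-1}$ from De~Giorgi--Nash--Moser and Cheng--Yau (Lemma~\ref{lemma:harmonicCoordinates});
\item the refined Kato inequality applied to $(|\nabla y^i|^2+1)^b$ with $b\approx\frac{n-2}{2(n-1)}$;
\item capacity cut-offs with $\int|\nabla\chi_r|^2\le Cr^{1-\delta}$.
\end{itemize}
The resulting error $r^{2b(\gamma-1)+1-\delta}$ tends to zero for $\delta$ just above $\frac{1}{n-1}$. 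This is exactly where $\dim_{\mathcal M}\mathcal S\le n-3+\frac{1}{n-1}$ enters.

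Finally, ``proper, hence a homeomorphism'' needs more. The paper uses a mod $2$ degree argument for surjectivity, and a covering argument over the connected set $\mathbb R^n\setminus\Psi(\mathcal S)$. It then uses a metric-completion argument for injectivity over $\Psi(\mathcal S)$.
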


\begin{remark}
The homeomorphism \(\Psi\) constructed in the rigidity statement is actually locally bi-Lipschitz with respect to the background metric $g_0$. However, under the present \(L^\infty\) hypotheses, \(\Psi\) need not be \(C^1\) across the singular set \(\mathcal S\), as is shown by Example~\ref{example:flat-Linfty-nonsmooth} below.
\end{remark}

\begin{remark} 
In the proof of the \textbf{Inequality} part of Theorem~\ref{A}, the Minkowski dimension assumption is used only in two places: in the density theorem, Theorem~\ref{density}, and in the blow-up argument for the singular set of an $L^\infty$-metric, Proposition~\ref{blow-up1}. Both ingredients are available under the corresponding Hausdorff dimension hypotheses. More
precisely, Theorem~\ref{density} remains valid under the assumption $\dim_{\mathcal H}\mathcal S<n-2$, as explained in Remark~\ref{density-hausdorff}. In addition,
 Proposition~\ref{blow-up1} is stated under the Hausdorff dimension assumption $\dim_{\mathcal H}\mathcal S<n-3+\frac{2}{n}.$

Consequently, the \textbf{Inequality} statement also holds if the Minkowski dimension hypothesis is replaced by the Hausdorff dimension bound
\begin{equation}
    \dim_{\mathcal H}\mathcal S < n-3+\frac{2}{n}.
\end{equation}
\end{remark}

The assumptions in Theorem~\ref{A} are essential.  In particular, the
\(L^\infty\)-hypothesis cannot be omitted; see Definition \ref{linfinity} below
for the notion of an $L^{\infty}$-manifold.  The negative mass Schwarzschild
manifold has zero scalar curvature, a singular set of Minkowski dimension zero, and negative ADM mass, but its singularity is not \(L^\infty\): the metric is not uniformly equivalent to a
smooth background metric near the singular set.  Thus, the two-sided
\(L^\infty\) control rules out precisely this type of degeneracy.  The
Minkowski dimension assumption is also sharp in spirit.  Codimension-two singularities
may carry conical angle defects, and such cone-type singularities can contribute
a negative distributional scalar curvature concentrated along the singular set;
in asymptotically flat examples this can lead to negative ADM mass despite
nonnegative scalar curvature on the regular set.  Therefore, one cannot expect a
positive mass theorem under the sole assumption that the scalar curvature is nonnegative away from a
singular set of codimension two or lower. The theorem shows that any sharp Minkowski-codimension threshold for the validity of the positive mass theorem must lie between codimensions two and 
$3-\frac{2}{n}$.  Determining the precise threshold remains an interesting open problem.

This result may be viewed as an asymptotically flat analogue of Schoen's
codimension-three conjecture for positive scalar curvature.  In the compact
setting, this conjecture predicts that singular sets of codimension at least
three should be invisible from the point of view of positive scalar curvature:
if an $L^{\infty}$-metric is smooth and has nonnegative scalar curvature
away from such a set, then the singularities should not evade the usual
obstructions to positive scalar curvature.  A formulation of this conjecture
appears, for example, in the work of Li--Mantoulidis
\cite[Conjecture 1.5]{LiMantoulidis2019}.  Theorem \ref{A} establishes a
corresponding phenomenon in the asymptotically flat setting: the presence of a
codimension-three \(L^\infty\)-singular set does not invalidate the positive mass
theorem. In other words, codimension-three \(L^\infty\)-singularities cannot hide 
negative mass in an asymptotically flat manifold.

There is a substantial literature on positive mass theorems and scalar
curvature problems for nonsmooth or singular metrics.  Miao \cite{Miao2002} proved a positive
mass theorem for metrics with corners along a hypersurface under a mean-curvature
jump condition, see also Shi--Tam \cite{ShiTam2002}; related smoothing and Ricci-flow approaches were
developed by McFeron--Szekelyhidi \cite{McFeronSzekelyhidi2012}.  Lee \cite{Lee2013} proved a
positive mass theorem for Lipschitz metrics with small singular sets, while Lee--LeFloch \cite{LeeLeFloch2015} established a spin positive mass theorem for
metrics with distributional scalar curvature in the class
\(C^0\cap W^{1,n}_{\mathrm{loc}}\).  Other singular
positive mass results include work on boundary and fill-in phenomena
\cite{HirschMiao2020,MantoulidisMiaoTam2020}, incomplete manifolds
\cite{LeeLesourdUnger2023}, manifolds with structured skeleton-type singularities \cite{LiMantoulidis2019}, isolated conical singularities
\cite{DaiSunWang2025}, and $C^0$-metrics \cite{JSZ,LeeTam,MazurowskiYao2026ContinuousMetrics}.
We also mention the related work of Burkhardt-Guim
\cite{BurkhardtGuim2024Smoothing}, who used Ricci flow to smooth
\(L^\infty\)-metrics on \(\mathbb R^n\) with nonnegative scalar curvature away
from a singular set of finite \((n-\alpha)\)-dimensional Minkowski content,
\(\alpha>2\), under a small \(L^\infty\)-closeness assumption to the Euclidean
metric.  

The compact positive scalar curvature problem with \(L^\infty\)-singularities
has also seen important recent developments. Li--Mantoulidis \cite{LiMantoulidis2019} 
treated the case of dimension three, and
Kazaras \cite{Kazaras2024} proved desingularization results in dimension
four. Dai--Sun--Wang \cite{DaiSunWang2024ConicalPSC} and Dai--Wang--Wang--Wei \cite{DaiWangWangWei2024RCD}
established the case of isolated singularities on restricted topologies, while Wang--Xie \cite{WangXie2024SphereSubsetsLinfty}
and Shi--Wang--Wu--Zhu \cite{ShiWangWeiZhu2021FillIn} obtained further positive results
for singular sets of codimension greater than or equal to $\frac{n}{2}+1$. 
Very recently and concurrently with the present paper, Wang–Wang–Xie \cite{Wang-Wang-Xie} treated the torus case when the singular set has codimension greater than $n-3+\frac{2}{n}$, while Bi–Zhu \cite{Bi-Zhu} obtained an analogous result for enlargeable manifolds under a similar hypothesis involving the Assouad dimension. The latter assumption is more restrictive than the corresponding Hausdorff or Minkowski dimension bounds, as it involves a uniform local packing condition across all centers and scales. In contrast, Cecchini--Frenck--Zeidler \cite{CecchiniFrenckZeidler2024} showed that, in high
dimensions, there are counterexamples to Schoen's conjecture with point
singularities.  These compact results show that
the codimension-three question is quite subtle. On the other hand, as Theorem \ref{A} shows,
these subtleties disappear when the conjecture is considered in the noncompact context of the positive mass theorem.

The proof of Theorem \ref{A} involves three primary components. The first is a density and deformation
argument, that perturbs the metric to one of the same type of singularities but with strictly positive 
scalar curvature away from the singular set, harmonic asymptotics, and an ADM mass that is nearly unchanged. 
The codimension-$(3-\frac{2}{n})$
assumption enters analytically partially through capacity, namely, compact sets of Minkowski
dimension less than \(n-2\) have vanishing \(W^{1,2}\)-capacity.  This permits
integration by parts across \(\mathcal S\), yields the global Sobolev inequality,
and allows one to solve the elliptic equations needed for the deformation.
The second component consists of establishing a generalization of the main positive mass-type theorem
proven by Brendle--Wang \cite[Theorem 1.5]{BrendleWang2026}, so that it applies to what we call
weak $n$-data sets. Here, the notion of weak $n$-data set primarily differs from the Brendle--Wang $n$-data set concept
by a weakening of the weighted scalar curvature integral inequality. Lastly, the third component of the proof
is inspired by the work of Bi--Hao--He--Shi--Zhu \cite{BiHaoHeShiZhu26} and Brendle--Wang \cite{BrendleWang2026}.
In particular, using the codimension threshold $3-\frac{2}{n}$, we conformally blow up the singular set with Green's functions, producing a complete smooth manifold \(M^n\setminus\mathcal S\) with the chosen asymptotically
flat end preserved and satisfying a pointwise weighted scalar curvature inequality.
A stable \(\mu\)-bubble is then constructed, which serves as a lower-dimensional asymptotically flat space carrying
an integral weighted scalar curvature inequality, placing it within the context of weak $(n-1)$-data sets
and their positive mass-type theorem.  The rigidity statement is obtained by extending the standard splitting 
argument to show that the regular part is flat, while \(L^\infty\)-control across the singular set
together with the capacity property rule out nontrivial singular behavior. However, difficulties arise due to a lack of uniform Hessian control for harmonic functions near the singular set, and to overcome this, a stronger codimension threshold of
$3-\frac{1}{n-1}$ is needed for our approach to rigidity.

The paper is organized as follows.  In Section~\ref{sec2} we develop the elliptic theory
for \(L^\infty\)-metrics with codimension-greater-than-two singular sets, including
integration by parts across \(\mathcal S\), a global Sobolev inequality, and the
density theorem producing harmonically asymptotically flat metrics.  In
Section~\ref{sec3} we blow up the singular set and construct the \(\mu\)-bubbles used in
the dimension-reduction argument.  Section~\ref{sec4} formulates and establishes the generalized 
positive mass-type inequality for weak $n$-data sets, following Brendle--Wang \cite{BrendleWang2026Spacetime}, and this is applied to obtain the inequality portion of Theorem \ref{A}.
Finally, in Section~\ref{sec5} we prove rigidity and complete the proof of the singular positive mass
theorem.

\medskip

\noindent\textbf{Acknowledgements:} MK would like to thank Sven Hirsch, Demetre Kazaras, and Yiyue Zhang for discussions.

\section{A Density Theorem for \texorpdfstring{$L^\infty$}{Linfty}-Metrics}
\label{sec2}

In this section we develop the theory of elliptic PDE on $L^\infty$-asymptotically flat manifolds with low-codimension
singularities. This is then used to achieve perturbations to positive scalar curvature and harmonic asymptotics. We begin with several definitions.

\begin{definition}\label{linfinity}
Let \(M^n\) be a smooth manifold equipped with a smooth Riemannian
metric \(g_0\).  A measurable symmetric \((0,2)\)-tensor \(g\) is called an \textup{\(L^\infty\)-metric} with respect to \(g_0\) if every point \(x\in M^n\) admits a neighborhood \(V_x\) and
a constant \(\Lambda_x\geq 1\) such that
\begin{equation}\label{Lambda}
        \Lambda_x^{-1} g_0 \leq g \leq \Lambda_x g_0
        \qquad \text{a.e. on } V_x
\end{equation}
as quadratic forms. We say that $(M^n,g)$ is a \textup{complete $L^{\infty}$-Riemannian manifold} if $g_0$ is complete and 
\eqref{Lambda} holds with a uniform constant $\Lambda\geq 1$. The \textup{regular set} of an \(L^\infty\)-metric \(g\) consists of those points having a neighborhood on which \(g\) is smooth (admits a smooth representative), and the \textup{singular set}
is the complement of the regular set.
\end{definition}


\begin{definition}\label{AFdef} 
A complete $L^\infty$-Riemannian manifold $(M^n, g)$, with $n\geq 3$, is called \textup{asymptotically flat} if there exists a compact set $\mathcal{K}\subset M^n$ such that the following properties hold.
\begin{enumerate}
\item The complement $M^n\setminus \mathcal{K}$ has finitely many components $\{\mathcal{E}_i\}_{i=1}^k$, called ends, each of which is diffeomorphic to $\mathbb{R}^n\setminus \overline{B}_1$.

\item For each $i$ the metric $g|_{\E_i}$ is smooth, and after pulling back by the diffeomorphism $\mathbb{R}^n\setminus \overline{B}_1 \rightarrow \E_i$, it asymptotes to the Euclidean metric $\delta$ in Cartesian coordinates with decay 
\begin{equation}
|\mathring{\nabla}^l( g|_{\E_i}-\delta)|_{\delta}=O(r^{-q-l}),\quad \quad l=0,1,2,
\end{equation} 
for some $q>\frac{n-2}{2}$, where $\mathring{\nabla}$ denotes covariant differentiation with respect to $\delta$.
    
\item For each $i$ the scalar curvature of the end is integrable, $R_g \in L^1(\mathcal{E}_i)$.

\end{enumerate}
\end{definition}

\begin{remark}\label{unif-bounded} 
By smoothly deforming the background metric $g_0$, it may be assumed 
that $g_0$ agrees with the Euclidean metric in each asymptotically flat end. Thus, we may take the background manifold $(M^n,g_0)$ itself to be asymptotically flat in Definition \ref{AFdef}.
\end{remark}

\begin{remark}
By definition, the singular set of an $L^\infty$-Riemannian manifold is closed. Furthermore, in the
asymptotically flat context it is contained within a compact set $\mathcal{K}$, and hence the singular
set for these type of manifolds is necessarily compact.
\end{remark}

For each asymptotically flat end, the \textit{ADM mass} is defined by
\begin{equation}
    m
    =
    \frac{1}{2(n-1)\omega_{n-1}}
    \lim_{r\to\infty}
    \int_{S_r}\sum_{i,j=1}^n
    \left(\partial_i g_{ij}-\partial_j g_{ii}\right)\nu^j\,dA_\delta ,
\end{equation}
where $\nu$ and $dA_{\delta}$ are respectively the Euclidean outward unit normal and hypersurface measure
of the coordinate sphere $S_r$. Under the above decay assumptions, and with the corresponding integrability of
the scalar curvature, this quantity is independent of the choice of
asymptotically flat coordinates and hence is a well-defined geometric invariant
of the end, as shown by Bartnik and Chru\'{s}ciel \cite{Bartnik1986,Chrusciel}.

We next recall the class of asymptotically flat metrics with harmonic
asymptotics.  This special form is useful because it simplifies the asymptotics, and
the leading coefficient in the asymptotic expansion directly determines the ADM mass.

\begin{definition}\label{HAF}
An end $(\mathcal{E}, g)$ of an asymptotically flat $(L^\infty)$-Riemannian manifold is called \textup{harmonically asymptotically flat} if the following decay\footnote{See \cite[Appendix A]{Lee} for the definition of weighted 
H\"{o}lder spaces.} holds in the asymptotic coordinate system
\begin{equation}\label{fonaoinoih}
g-\left(1+\frac{2m}{(n-2)r^{n-2}}\right)\delta\in C^{2,\alpha}_{-\mathring{q}}(\E),
\end{equation}
for some $m\in\mathbb{R}$, $\mathring{q}>n-2$, and $\alpha\in(0,1)$. In this case, $m$ is the ADM mass of $\mathcal{E}$.
\end{definition}

\begin{remark}\label{2.6}
It will be convenient in some places to extend the definition of harmonically asymptotically flat to accommodate 2-dimensional ends, by removing the mass term from \eqref{fonaoinoih}. In this situation the mass is defined to be zero.\end{remark}

The main result of this section is the following density theorem, which generalizes a well-known
result in the setting of smooth asymptotically flat manifolds \cite[Lemma 1]{Bray2001}.

\begin{theorem}\label{density}
Let \((M^n,g)\) be a complete asymptotically flat
\(L^\infty\)-manifold. Assume that the singular set \(\mathcal{S}\) satisfies the Minkowski dimension upper bound $\dim_{\mathcal M}\mathcal{S}< n-2$.
Suppose that \(R_g\geq 0\) on \(M^n\setminus \mathcal{S}\). Then for each \(\varepsilon>0\)
there exists a complete asymptotically flat \(L^\infty\)-metric
\(g'\) on \(M^n\) 
with the following properties.
\begin{enumerate}
\item It holds that $\|g'-g\|_{L^{\infty}(M^n,g_0)}<\varepsilon$.
\item \(g'\) is smooth on \(M^n\setminus \mathcal{S}\) and \(R_{g'}> 0\) on
\(M^n\setminus \mathcal{S}\).
\item On each asymptotically flat end, the new metric \(g'\) is harmonically asymptotically flat.
\item On each asymptotically flat end, if \(m'\) and \(m\) denote the respective ADM masses of \(g'\) and \(g\), then
$|m'-m|<\varepsilon$.
\end{enumerate}
\end{theorem}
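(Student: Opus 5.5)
I will follow the two-step scheme of Bray's smooth density lemma \cite[Lemma 1]{Bray2001}: first make the scalar curvature small and compactly supported near infinity, then fix it by a conformal change. The new ingredient is that every elliptic equation is posed on all of $M^n$ with an $L^\infty$ coefficient and then shown to be smooth on $M^n\setminus\mathcal S$.

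\textbf{Step 1 (analytic preliminaries).} Because $\dim_{\mathcal M}\mathcal S<n-2$, a covering of $\mathcal S$ by $N(r)\lesssim r^{-s}$ balls with $s<n-2$ gives cutoffs $\eta_r$. These satisfy $\eta_r=0$ near $\mathcal S$, $\eta_r=1$ outside the $2r$-neighborhood, and $\int|\nabla\eta_r|^2\lesssim r^{n-2-s}\to0$. Hence $\mathcal S$ has zero $W^{1,2}$-capacity.

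With these cutoffs, for $u\in W^{1,2}_{loc}$ that is smooth off $\mathcal S$, one can integrate by parts across $\mathcal S$. Weak solutions on $M^n\setminus\mathcal S$ of $-\Delta_g u+cu=f$ with finite energy are therefore weak solutions on $M^n$. Since $g$ is uniformly equivalent to $g_0$, which is Euclidean on the ends (Remark~\ref{unif-bounded}), the global Sobolev inequality for $g_0$ transfers to $g$ with a constant depending on $\Lambda$. De Giorgi--Nash--Moser then gives $L^\infty$ and H\"older bounds on all of $M^n$, and standard Schauder theory gives smoothness on the regular set.

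\textbf{Step 2 (cut off to harmonic form at infinity).} On each end $\mathcal E_i$, for large $\sigma$, I interpolate $g$ to $\delta$ on $\{\sigma<r<2\sigma\}$ with a cutoff $\chi_\sigma$, giving $\bar g=\chi_\sigma g+(1-\chi_\sigma)\delta$. This leaves $g$ untouched near $\mathcal S\subset\mathcal K$, and $\|\bar g-g\|_{L^\infty}\lesssim\sigma^{-q}$. The scalar curvature $R_{\bar g}$ is supported in the annuli (plus the original $R_g\geq0$ inside), and $\int|R_{\bar g}-R_g\chi|\to0$ as $\sigma\to\infty$ because $q>\frac{n-2}{2}$.

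I then solve
\[
-a_n\Delta_{\bar g}u+\tfrac12\big(R_{\bar g}-|R_{\bar g}|\big)\mathbf 1_{\{r>\sigma\}}u=0,\qquad u\to1 \text{ on each end},
\]
with $a_n=\frac{4(n-1)}{n-2}$. Only the negative part of $R_{\bar g}$, localized in the annuli, is used. The potential is smooth, supported away from $\mathcal S$, and small in $L^{n/2}$. Solvability, positivity, and $\|u-1\|_{L^\infty}\to0$ as $\sigma\to\infty$ follow from the Sobolev inequality of Step 1 via the Lax--Milgram and Fredholm theory in weighted spaces, using the capacity zero of $\mathcal S$ to posit the problem on $M^n$.

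The metric $u^{4/(n-2)}\bar g$ has $R\geq0$ on the regular set and $R=0$ for $r>2\sigma$. Since $u$ is $\bar g$-harmonic there and $\bar g=\delta$ there, $u=1+A|x|^{2-n}+O(|x|^{1-n})$, which gives harmonic asymptotics in the sense of Definition~\ref{HAF}. The mass is $m'=$ (mass of $\bar g$) $+$ a multiple of $A$, and $A$ is bounded by $\int|R^-_{\bar g}|\to0$. Together with $m(\bar g)\to m(g)$, which is standard since the mass is computed as a flux approximating $m$, this gives $|m'-m|<\varepsilon$.

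\textbf{Step 3 (strict positivity).} I take a positive smooth $\phi$ with $\phi\sim r^{-n-1}$ far out and $\phi>0$ everywhere, including near $\mathcal S$. I solve $-a_n\Delta w+R w=-t\phi w$, with $w\to1$, for small $t$, again weakly on $M^n$ by Step 1. This gives $R>0$ on the regular set and, because $\phi$ decays fast, preserves harmonic asymptotics while changing the mass by $O(t)$. Positivity of $w$ and closeness to $1$ in $L^\infty$ (Moser) give property (1).

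\textbf{Main obstacle.} The hard part is the solvability and uniform $L^\infty$ control of these equations across $\mathcal S$, where $g$ is only $L^\infty$. This requires the capacity cutoff argument to show that $W^{1,2}$ solutions on $M^n\setminus\mathcal S$ extend as genuine weak solutions, so that no mass-like contribution hides at $\mathcal S$. It also requires a maximum principle across $\mathcal S$ to obtain positivity of $u$ and $w$. I would handle both by testing with $\eta_r\cdot(\text{test function})$ and letting $r\to0$. The error terms are controlled by $\|\nabla u\|_{L^\infty}$ near $\mathcal S$ being merely bounded in $L^2$, which is handled via Cauchy--Schwarz against $\|\nabla\eta_r\|_{L^2}\to0$.
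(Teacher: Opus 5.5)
Your Step 1 agrees with the paper (Proposition~\ref{prop2.3}, Lemma~\ref{sobolev}). Step 2, however, has a genuine gap: it does not preserve the mass.

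Since $\bar g=\delta$ on $\{r\ge2\sigma\}$, the ADM mass of $\bar g$ is exactly $0$ for every $\sigma$. The ADM flux of $g$ through $S_\sigma$ tends to $2(n-1)\omega_{n-1}m$, but the mass of $\bar g$ is read off at infinity, so your claim $m(\bar g)\to m(g)$ is false. The missing mass sits in the transition annulus. Write $\bar g=\delta+\chi_\sigma h$. The linear part of $R_{\bar g}$ is a divergence, while the quadratic part and the volume-form error integrate to $O(\sigma^{n-2-2q})\to0$. Hence
\[
\int_{\{\sigma<r<2\sigma\}}R_{\bar g}\,dV_{\bar g}\longrightarrow-2(n-1)\omega_{n-1}\,m .
\]
Moreover, $\int|R_{\bar g}|$ over the annulus can be of order $\sigma^{n-2-q}$, which is unbounded when $q<n-2$. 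So your claim $\int|R_{\bar g}-\chi_\sigma R_g|\to0$ is false in general.

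Worse, you absorb only the negative part, so your potential is $\le0$. Then $u$ is $\bar g$-superharmonic with $u\to1$, the monopole coefficient satisfies $A\ge0$, and the final mass is $m'=m(\bar g)+2A=2A\ge0$. Thus whenever $m\le-\varepsilon$, property (4) fails. That is exactly the case in which the theorem is used in Section~\ref{sec4}.

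The missing idea is in Proposition~\ref{weak-density}. Interpolate not to $\delta$ but to a harmonic model with the same mass, $g_{\mathrm{mod}}=(1+\frac{m}{2r^{n-2}})^{\frac{4}{n-2}}\delta$, so that the inner and outer fluxes cancel. Then remove the full signed scalar curvature on $\{r\ge s\}$, taking $\mathbf c_s=\frac{n-2}{4(n-1)}\psi_sR_{g_s}$ with a radial cutoff $\psi_s$. Theorem~\ref{positive-solu} only needs the negative part of $\mathbf c_s$ to be small in $L^{n/2}$. The monopole coefficient is proportional to $\int\mathbf c_su_s$, and it tends to $0$ for two reasons:
\begin{enumerate}
\item $\int\psi_sR_{g_s}\to0$, because the radial cutoff turns the linear term into an average of fluxes, and these cancel;
\item $|\int\mathbf c_s(u_s-1)|\lesssim\|\mathbf c_s\|^2_{L^{2n/(n+2)}}\to0$, since $q>\frac{n-2}{2}$.
\end{enumerate}
Keeping the signed potential is essential, since $\int|R_{g_s}|$ over the annulus need not be small.

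Two smaller points. First, $\frac12(R_{\bar g}-|R_{\bar g}|)\mathbf 1_{\{r>\sigma\}}$ is not smooth, so your metric would not be smooth on $M^n\setminus\mathcal S$ as (2) requires. Second, in Step 3 the equation $-a_n\Delta w+Rw=-t\phi w$ gives $R_{\mathrm{new}}=-t\phi\,w^{-4/(n-2)}<0$. You need $-a_n\Delta w=t\phi w$; with that sign corrected, Step 3 is essentially Proposition~\ref{deform-positive-scalar}.
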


\subsection{Laplace-Beltrami operator}
When the singular set $\mathcal{S}$ is of Minkowski codimension greater than two, it has vanishing $W^{1,2}$-capacity. In particular,
it is invisible to $W^{1,2}$-analysis in the sense that $W^{1,2}(M^n\setminus\mathcal{S},g)$ agrees with $W^{1,2}(M^n,g)$, and integration by parts is not disrupted.

\begin{proposition}[Integration by parts across a codimension-two${}^+$ singular set]\label{prop2.3}
Let $(M^n,g)$ be an $L^\infty$-manifold.
If the singular set $\mathcal{S}$ is compact and satisfies the Minkowski dimension upper bound
$\dim_{\mathcal M} \mathcal S < n-2$,
then the singular set has vanishing \(W^{1,2}\)-capacity with respect to \(g\).
In particular consider \(u\in W^{1,2}(M^n,g)\), and let
$v\in C^\infty(M^n\setminus \mathcal S)$
have compact support in \(M^n\) in addition to
\begin{equation}
v\in W^{1,2}(M^n,g),\qquad
|\nabla v|_g\in L^\infty(M^n\setminus \mathcal S),
\qquad
\Delta_g v\in L^2(M^n\setminus \mathcal S),
\end{equation}
then
\begin{equation}
\int_{M^n\setminus \mathcal S} u\,\Delta_g v\,dV_g
=
-\int_{M^n\setminus \mathcal S}
\langle \nabla u,\nabla v\rangle_g\,dV_g .
\end{equation}
\end{proposition}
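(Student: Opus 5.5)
The plan is to build explicit Lipschitz cutoff functions from the distance to $\mathcal S$, and to control their Dirichlet energy using the Minkowski dimension bound. Fix $s$ with $\dim_{\mathcal M}\mathcal S<s<n-2$, and let $d(x)=\mathrm{dist}_{g_0}(x,\mathcal S)$. Since $\mathcal S$ is compact, the definition of upper Minkowski dimension gives $\mathrm{Vol}_{g_0}(\{d<r\})\le Cr^{n-s}$ for all small $r$. Define the cutoff
\[
\eta_r(x)=\min\{1,\max\{0,\,r^{-1}d(x)-1\}\},
\]
so $\eta_r=0$ on $\{d\le r\}$, $\eta_r=1$ on $\{d\ge 2r\}$, and $|\nabla\eta_r|_{g_0}\le r^{-1}$. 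By the uniform $L^\infty$ equivalence near the compact set $\mathcal S$ (cover $\mathcal S$ by finitely many $V_x$), the $g$- and $g_0$-norms and volume forms are comparable up to a constant $\Lambda$. Hence
\[
\int_M|\nabla\eta_r|_g^2\,dV_g\le C\Lambda^{c}\,r^{-2}\,r^{n-s}=C r^{n-2-s}\to0 .
\]
Together with $\mathrm{Vol}(\{d<2r\})\to0$, this shows $1-\eta_r$ has $W^{1,2}$ norm tending to zero, while $1-\eta_r\equiv1$ near $\mathcal S$. This gives vanishing capacity. The same estimate also shows that $C_c^\infty(M\setminus\mathcal S)$ is dense in $W^{1,2}(M,g)$: multiply by $\eta_r$ and then mollify away from $\mathcal S$.

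For the integration by parts, let $u\in W^{1,2}(M,g)$ and $v$ as in the statement. Since $\eta_r v$ is Lipschitz and compactly supported in $M\setminus\mathcal S$, where $g$ is smooth, the ordinary divergence theorem (after approximating $u$ in $W^{1,2}_{loc}$ of the regular region by smooth functions) gives
\[
\int u\,\eta_r\,\Delta_g v\,dV_g=-\int \eta_r\langle\nabla u,\nabla v\rangle_g\,dV_g-\int u\langle\nabla\eta_r,\nabla v\rangle_g\,dV_g .
\]
To see this, apply the divergence theorem to $\mathrm{div}(u\eta_r\nabla v)$, which has compact support in the regular set.

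Now let $r\to0$. On the left, $u\Delta_g v\in L^1$ because both factors are in $L^2$, so dominated convergence applies. The first term on the right converges by dominated convergence, since $\langle\nabla u,\nabla v\rangle\in L^1$. The error term is bounded by
\[
\|\nabla v\|_{L^\infty}\,\|u\|_{L^2(\{d<2r\})}\,\|\nabla\eta_r\|_{L^2}\le C\,\|u\|_{L^2(\{d<2r\})}\,r^{(n-2-s)/2}\to0 .
\]
Note that $\mathcal S$ itself has measure zero, because $s<n$. So the integrals over $M\setminus\mathcal S$ and over $M$ coincide, and the identity follows.

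The main point requiring care is the volume estimate for tubular neighborhoods, and its transfer from $g_0$ to $g$. This uses only compactness of $\mathcal S$ and finitely many $\Lambda_x$. It is also where the strict inequality $\dim_{\mathcal M}\mathcal S<n-2$ is essential: at exactly $n-2$ the energy bound $r^{n-2-s}$ would fail to decay.
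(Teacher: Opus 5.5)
Your proof is correct and is essentially the paper's argument. The paper also uses cut-offs equal to $1$ near $\mathcal S$ and supported in $N_{2r}(\mathcal S)$, with Dirichlet energy $O(r^{n-2-s})$ from the Minkowski volume bound (written there as $r^{1-\delta}$ with $s=n-3+\delta$). It then integrates by parts on the regular set and bounds the error term by $\|u\|_{L^2}\|\nabla v\|_{L^\infty}\|\nabla\eta_r\|_{L^2}$. The only difference is your explicit Lipschitz distance cut-off in place of smooth ones.

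Your side remark on the density of $C_c^\infty(M\setminus\mathcal S)$ in $W^{1,2}(M,g)$ is not needed here, but as written it skips a step. The term $\|u\nabla\eta_r\|_{L^2}$ is only controlled by $\|u\|_{L^\infty}\|\nabla\eta_r\|_{L^2}$, so you should first truncate $u$ to bounded functions before multiplying by $\eta_r$.
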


\begin{proof}
Let $g_0$ be the background metric. Since the Minkowski dimension of $\mathcal S$ exists and is of codimension greater than two,
for every $\delta$ satisfying 
\begin{equation}
\max\{0,\dim_{\mathcal{M}}\mathcal{S}+3-n\}<\delta <1
\end{equation}
there exists a constant \(C_\delta\) such that, for all
sufficiently small \(r>0\),
\begin{equation}
\operatorname{Vol}_{g_0}\bigl(N_r(\mathcal S)\bigr)
\leq C_\delta r^{3-\delta}.
\end{equation}
Here \(N_r(\mathcal{S})\) denotes the \(r\)-neighborhood of \(\mathcal{S}\) with respect to \(g_0\).

Choose smooth cut-off functions \(\chi_r\in C_c^\infty(M^n)\) satisfying
\begin{equation}
0\leq \chi_r\leq 1,\qquad
\chi_r=1 \quad\text{on } N_r(\mathcal{S}),
\qquad
\supp\, \chi_r\subset N_{2r}(\mathcal{S}),
\end{equation}
and
\begin{equation}
|\nabla \chi_r|_{g_0}\leq \frac{C}{r}.
\end{equation}
Since \(g\) and \(g_0\) are uniformly equivalent on a fixed compact neighborhood
of \(\mathcal{S}\), there is a constant \(C\), independent of \(r\), such that
\begin{equation}
|\nabla \chi_r|_g^2\,dV_g
\leq
C |\nabla \chi_r|_{g_0}^2\,dV_{g_0}.
\end{equation}
Therefore
\begin{equation}
\int_{M^n} |\nabla \chi_r|_g^2\,dV_g
\leq
\frac{C}{r^2}
\operatorname{Vol}_{g_0}\bigl(N_{2r}(\mathcal{S})\bigr)
\leq
C_\delta r^{1-\delta}.
\end{equation}
Since \(\delta<1\), it follows that
\begin{equation}
\|\nabla \chi_r\|_{L^2(M^n,g)}\to 0
\qquad\text{as } r\to0.
\end{equation}
Thus \(\mathcal{S}\) has vanishing \(W^{1,2}\)-capacity.

Set $\eta_r=1-\chi_r$.
Then \(\eta_r u\) vanishes in a neighborhood of \(\mathcal{S}\). Hence, by integration by
parts on the smooth manifold \(M^n\setminus \mathcal{S}\), and using the compact support of
\(v\), we obtain
\begin{align}
\int_{M^n\setminus \mathcal{S}} \eta_r u\,\Delta_g v\,dV_g
&=
-\int_{M^n\setminus \mathcal{S}}
\langle \nabla(\eta_r u),\nabla v\rangle_g\,dV_g \\
&=
-\int_{M^n\setminus \mathcal{S}}
\eta_r\langle\nabla u,\nabla v\rangle_g\,dV_g
+
\int_{M^n\setminus \mathcal{S}}
u\langle\nabla \chi_r,\nabla v\rangle_g\,dV_g .\nonumber
\end{align}
We now let \(r\to0\). Since \(\eta_r\to1\) almost everywhere and
$u\,\Delta_g v\in L^1(M^n\setminus \mathcal{S})$,
we have
\begin{equation}
\int_{M^n\setminus \mathcal{S}} \eta_r u\,\Delta_g v\,dV_g
\to
\int_{M^n\setminus \mathcal{S}} u\,\Delta_g v\,dV_g .
\end{equation}
Similarly,
\begin{equation}
\int_{M^n\setminus \mathcal{S}}
\eta_r\langle\nabla u,\nabla v\rangle_g\,dV_g
\to
\int_{M^n\setminus \mathcal{S}}
\langle\nabla u,\nabla v\rangle_g\,dV_g .
\end{equation}
The remaining cut-off error satisfies
\begin{equation}
\left|
\int_{M^n\setminus \mathcal{S}}
u\langle\nabla \chi_r,\nabla v\rangle_g\,dV_g
\right|
\leq
\|u\|_{L^2(M^n,g)}
\|\nabla v\|_{L^\infty(M^n\setminus \mathcal{S},g)}
\|\nabla\chi_r\|_{L^2(M^n,g)} .
\end{equation}
Since
\begin{equation}
\|\nabla\chi_r\|_{L^2(M^n,g)}\to0,
\end{equation}
this error term vanishes. Therefore
\begin{equation}
\int_{M^n\setminus \mathcal{S}} u\,\Delta_g v\,dV_g
=
-\int_{M^n\setminus \mathcal{S}}
\langle \nabla u,\nabla v\rangle_g\,dV_g .
\end{equation}
\end{proof}

\begin{remark}\label{density-hausdorff}
Proposition~\ref{prop2.3} remains valid if the Minkowski dimension assumption is replaced by the Hausdorff dimension assumption $
 \dim_{\mathcal H}\mathcal S<n-2 $.
Indeed,  the Minkowski dimension bound is used only to
construct cut-off functions $\chi_r$ supported in shrinking neighborhoods of $\mathcal S$ and satisfying
\[\|\nabla \chi_r\|_{L^2(M^n,g)}\to 0 .\]
Under the Hausdorff dimension assumption, one may instead use the cut-off
functions constructed by Donaldson--Sun~\cite[pp.~75--76]{Donaldson-Sun},
which have the same vanishing \(W^{1,2}\)-energy property. With these cut-off
functions in place, the rest of the proof is unchanged.

Consequently, all subsequent arguments in this section that rely on the dimension assumption only through Proposition~\ref{prop2.3} remain valid under the Hausdorff dimension hypothesis. In particular, Theorem~\ref{density} also
holds for $\dim_{\mathcal H}\mathcal S<n-2 .$  
\end{remark}

\subsection{Existence of positive solutions} 
Throughout the remainder of this section, it will be assumed for convenience that $M^n$ has a single end $\mathcal{E}$. The case of additional ends may be treated with minor modifications. Consider the equation with prescribed asymptotics
\begin{equation}\label{afh09g}
    \Delta_g u-\mathbf{c}u=0 \text{ on } M^n , \qquad u\rightarrow 1 \text{ as } r\rightarrow \infty,
\end{equation}
with $\mathbf{c}\in C^{0, \alpha}_{-n-q_0}(\mathcal{E})\cap C^\infty(M^n\setminus \mathcal S)\cap L^\infty(M^n)$ for some $\alpha\in(0,1)$ and $q_0 >0$. 
To study this problem, we record the following Sobolev inequality.

\begin{lemma}\label{sobolev}
Let \((M^n,g)\) be a complete asymptotically flat
\(L^\infty\)-manifold. Assume that the singular set \(\mathcal{S}\) satisfies the Minkowski dimension upper bound  $\dim_{\mathcal M}\mathcal{S}< n-2$.
Then there exists a constant \(C_*>0\), depending only on the geometry of
\((M^n,g)\), such that for every compactly supported function
$u\in W^{1,2}(M^n,g)$,
one has
\begin{equation}
    \left(\int_{M^n} |u|^{\frac{2n}{n-2}}\,dV_g\right)^{\frac{n-2}{n}}
    \leq
    C_*\int_{M^n} |\nabla u|_g^2\,dV_g .
\end{equation}
\end{lemma}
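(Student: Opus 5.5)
Since $g$ is uniformly equivalent to $g_0$ with a single constant $\Lambda$, the quantities $\int |u|^{2n/(n-2)}\,dV$ and $\int|\nabla u|^2\,dV$ computed with $g$ and with $g_0$ differ only by factors that depend on $\Lambda$ and $n$. Indeed, $dV_g\le \Lambda^{n/2}dV_{g_0}$ and $|\nabla u|_g^2\,dV_g \ge \Lambda^{-1-n/2}|\nabla u|_{g_0}^2\,dV_{g_0}$. So it is enough to prove the inequality for the smooth complete background metric $g_0$. For $g_0$ the singular set is irrelevant. By Remark~\ref{unif-bounded}, $(M^n,g_0)$ is a smooth asymptotically flat manifold, and we may assume it is exactly Euclidean on the end. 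I also need $W^{1,2}(M^n,g)=W^{1,2}(M^n,g_0)$ as sets with equivalent norms; this follows from the same equivalence. Compactly supported $u$ can be approximated in $W^{1,2}$ by $C_c^\infty(M^n)$ functions by standard mollification on the smooth manifold $M^n$. Both sides are continuous under this approximation (using Fatou on the left), so it suffices to treat $u\in C_c^\infty(M^n)$. The capacity statement of Proposition~\ref{prop2.3} is not really needed here; it only confirms that nothing is lost on $\mathcal S$, since $\mathcal S$ has $g_0$-measure zero.

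The main step is the Sobolev inequality on a smooth asymptotically flat manifold with $n\ge3$. This is classical, but I would argue it as follows. Write $M^n=\mathcal K'\cup \mathcal E$, where $\mathcal K'$ is a compact region with smooth boundary and $\mathcal E\cong\mathbb R^n\setminus \overline B_1$ carries the Euclidean metric. Take a cutoff $\phi$ with $\phi=1$ outside $B_3$ on the end and $\phi=0$ on $\mathcal K'\cup B_2$. Then $\phi u$ extends by zero to a function in $C_c^\infty(\mathbb R^n)$, and the Euclidean Sobolev inequality gives
\[
\|\phi u\|_{L^{2^*}}\le C\|\nabla(\phi u)\|_{L^2}\le C\|\nabla u\|_{L^2}+C\|u\|_{L^2(A)},
\]
where $A=B_3\setminus B_2$. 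For $(1-\phi)u$, which is supported in the compact region $\Omega=\mathcal K'\cup \overline B_3$, the compact Sobolev embedding gives $\|(1-\phi)u\|_{L^{2^*}}\le C(\|\nabla u\|_{L^2(\Omega)}+\|u\|_{L^2(\Omega)})$.

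The obstacle is therefore to control the lower-order term $\|u\|_{L^2(\Omega)}$ by $\|\nabla u\|_{L^2(M^n)}$. This uses that the end is infinite and that $u$ has compact support. I would argue by contradiction and compactness. Suppose there are $u_k\in C_c^\infty(M^n)$ with $\|u_k\|_{L^2(\Omega)}=1$ and $\|\nabla u_k\|_{L^2(M^n)}\to0$. Rellich on the connected region $\Omega'=\Omega\cup(B_R\setminus B_1)$ gives a subsequence converging in $L^2(\Omega')$ to a constant $c$ with $|c|\,\mathrm{Vol}(\Omega)^{1/2}=1$. Now apply the Euclidean Sobolev inequality to $\phi u_k$. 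The bound above shows $\|\phi u_k\|_{L^{2^*}(\mathbb R^n)}$ stays bounded. However, $\phi u_k\to c$ in $L^2(B_R\setminus B_3)$, which forces $\|\phi u_k\|_{L^{2^*}(B_R\setminus B_3)}\gtrsim |c|R^{n/2^*}$ for large $k$. Letting $R\to\infty$ contradicts boundedness, and here $n\ge3$ is used. Alternatively, one can prove the Poincaré-type bound $\|u\|_{L^2(\Omega)}\le C\|\nabla u\|_{L^2}$ directly, by integrating along rays in the Euclidean end out to where $u$ vanishes. Combining the pieces yields the stated inequality for $g_0$, and hence for $g$ with $C_*$ depending on $\Lambda$ and $(M^n,g_0)$.
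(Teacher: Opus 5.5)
Your proposal is correct, and it reaches the conclusion by a somewhat different route than the paper. The paper cites the Sobolev inequality for the smooth asymptotically flat background $(M^n,g_0)$ \cite[Lemma 3.1]{SchoenYau1979PMT}. It transfers this to $g$ by uniform equivalence only for $u\in C_c^\infty(M^n\setminus\mathcal S)$, where $g$ is smooth. It then uses the vanishing $W^{1,2}$-capacity of $\mathcal S$ (Proposition~\ref{prop2.3}) to get density of $C_c^\infty(M^n\setminus\mathcal S)$ in $W^{1,2}(M^n,g)$.

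You differ in two ways. First, you prove the background inequality yourself, via the cutoff splitting, the Euclidean Sobolev inequality on the end, and compactness (or Hardy-type ray integration) to absorb $\|u\|_{L^2(\Omega)}$. Second, you approximate by $C_c^\infty(M^n)$ rather than $C_c^\infty(M^n\setminus\mathcal S)$. You observe that $|\nabla u|_g^2=g^{ij}\partial_iu\,\partial_ju$ is comparable to $|\nabla u|_{g_0}^2$ almost everywhere, so $W^{1,2}(M^n,g)=W^{1,2}(M^n,g_0)$ with equivalent norms.

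The second point is the substantive difference. It shows that, with $W^{1,2}(M^n,g)$ understood as $L^2$ functions with distributional gradient in $L^2$, the hypothesis $\dim_{\mathcal M}\mathcal S<n-2$ plays no role in this particular lemma. The paper's route is shorter and stays on the regular set. Its capacity input is not wasted, since it is genuinely needed elsewhere in Section~\ref{sec2}: for integration by parts across $\mathcal S$, and for identifying weak equations on $M^n$ with equations on $M^n\setminus\mathcal S$. The first point buys self-containment at the cost of length.

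Two small details to fill in:
\begin{itemize}
\item Applying Rellich on $\Omega'$ requires a bound on $\|u_k\|_{L^2(\Omega')}$. This follows from the Poincar\'e inequality on the connected domain $\Omega'$, with the mean taken over $\Omega$.
\item You treat a single end. The finitely-many-ends case is identical, with a cutoff in each end.
\end{itemize}
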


\begin{proof}
Since the background metric \(g_0\) is complete, smooth, and asymptotically flat, the usual
Sobolev inequality holds on \((M^n,g_0)\); see \cite[Lemma 3.1]{SchoenYau1979PMT} for $n=3$ and note that the
same proof holds in all dimensions. In particular, by the uniform equivalence between $g$ and $g_0$,
the inequality holds with respect to $g$ for all $u\in C^{\infty}_c(M^n \setminus\mathcal{S})$.
Since $\mathcal{S}$ is (Minkowski) codimension greater than two, it has vanishing \(W^{1,2}\)-capacity by Proposition \ref{prop2.3}. Thus, $C^{\infty}_c(M^n \setminus\mathcal{S})$ is dense inside $W^{1,2}(M^n,g)$, yielding the desired result.
\end{proof}

\begin{theorem}\label{positive-solu}
Let \((M^n,g)\) be a complete asymptotically flat
\(L^\infty\)-manifold with asymptotic end $\mathcal{E}$. Assume that the singular set \(\mathcal{S}\) satisfies the Minkowski dimension upper bound $\dim_{\mathcal M}\mathcal{S}< n-2$. Suppose that $\mathbf{c}\in C^{0, \alpha}_{-n-q_0}(\mathcal{E})\cap C^\infty(M^n\setminus \mathcal S)\cap L^\infty(M^n)$ and satisfies  
\begin{equation}
\left(\int_{M^n} |\mathbf{c}_-|^{\frac{n}{2}}dV_g\right)^{\frac{2}{n}}\leq \frac{1}{2C_*}  ,\quad\quad  \mathbf{c}_-:=\min\{0, \mathbf c\},
\end{equation}
where $C_*$ is a constant from Lemma \ref{sobolev}.
Then there is a positive solution $u\in C^{\infty}(M^n\setminus \mathcal{S})\cap W^{1,2}(M^n,g)\cap L^{\infty}(M^n)$ of \eqref{afh09g} such that $u-(1+\frac{\mathcal{C}}{r^{n-2}})\in C^{2, \alpha}_{2-n-q'}(\mathcal{E})$, where $q'=\min\{1, q_0\}$ and
\begin{equation}
\mathcal{C}=-\frac{1}{(n-2)\omega_{n-1}}\int_{M^n}\mathbf{c}u \,dV_g.
\end{equation}
Moreover, \(u\) is bounded below by a positive constant in the
essential-infimum sense, that is
\begin{equation}
        \operatorname*{ess\,inf}_{M^n} u>0 .
\end{equation}
\end{theorem}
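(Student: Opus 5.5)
Write $u=1+w$, so that \eqref{afh09g} becomes $\Delta_g w-\mathbf{c}w=\mathbf{c}$ with $w\to 0$ at infinity. I will solve this variationally in the completion $\mathcal{D}$ of $C^\infty_c(M^n)$ under the norm $\|\nabla w\|_{L^2(M^n,g)}$. By Lemma~\ref{sobolev}, $\mathcal{D}$ embeds into $L^{2n/(n-2)}$. By Proposition~\ref{prop2.3}, $C^\infty_c(M^n\setminus\mathcal S)$ is dense in $\mathcal{D}$, so weak formulations can be tested against functions supported away from $\mathcal S$.

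\textbf{Coercivity.} Consider the bilinear form $B(w,\varphi)=\int\langle\nabla w,\nabla\varphi\rangle_g+\mathbf{c}w\varphi\,dV_g$. By H\"older's inequality and Lemma~\ref{sobolev},
\begin{equation*}
\int \mathbf{c}_- w^2\,dV_g\geq -\|\mathbf{c}_-\|_{L^{n/2}}\|w\|^2_{L^{2n/(n-2)}}\geq -\tfrac12\|\nabla w\|_{L^2}^2 .
\end{equation*}
Hence $B(w,w)\geq\frac12\|\nabla w\|^2_{L^2}$. The right-hand side $\varphi\mapsto-\int\mathbf{c}\varphi\,dV_g$ is a bounded functional on $\mathcal{D}$, because $\mathbf{c}\in L^{2n/(n+2)}$: it is bounded and decays like $r^{-n-q_0}$. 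Lax--Milgram therefore gives a unique weak solution $w\in\mathcal{D}$. Since $\mathbf c$ is smooth on $M^n\setminus\mathcal S$, local elliptic regularity shows $w\in C^\infty(M^n\setminus\mathcal S)$. Because $g$ is only $L^\infty$, De Giorgi--Nash--Moser applies in divergence form with measurable coefficients near $\mathcal S$, and together with the decay at infinity this gives $w\in L^\infty$. Also $u=1+w$ lies in $W^{1,2}_{loc}$, with $\nabla u\in L^2$. I read the statement's $W^{1,2}(M^n,g)$ in this sense, since $u\to1$ prevents $u$ itself from being in $L^2$.

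\textbf{Asymptotics.} On the end $\mathcal E$, the metric is smooth and $\Delta_g w=\mathbf{c}(1+w)\in C^{0,\alpha}_{-n-q_0}$. Standard weighted Schauder theory and the expansion of harmonic functions on asymptotically flat ends (as in \cite{Bray2001}, \cite{Lee}) give $w=\mathcal{C}r^{2-n}+O(r^{2-n-q'})$ in $C^{2,\alpha}$. To identify $\mathcal{C}$, integrate $\Delta_g u=\mathbf{c}u$ over $\{r\leq\rho\}$, using Proposition~\ref{prop2.3} with a cutoff equal to $1$ on the compact part. Letting $\rho\to\infty$, the flux $\int_{S_\rho}\partial_\nu u\to-(n-2)\omega_{n-1}\mathcal{C}$, which yields the stated formula. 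The $q'=\min\{1,q_0\}$ accounts for both the metric corrections and the decay of $\mathbf{c}$.

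\textbf{Positivity (the main obstacle).} The plan is a weak maximum principle argument. First, test the equation with $u_-=\min\{u,0\}$. This is legitimate: $u_-$ is compactly supported because $u\to1$, and it lies in $W^{1,2}$, so Proposition~\ref{prop2.3} and density allow it as a test function. This gives $\int|\nabla u_-|^2+\mathbf{c}u_-^2=0$, and the coercivity estimate then forces $u_-\equiv0$, so $u\geq0$.

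Strict positivity with a uniform lower bound is harder. I will use the weak Harnack inequality for nonnegative supersolutions of $\operatorname{div}(a\nabla u)-\mathbf{c}u=0$ with bounded measurable coefficients. This holds near $\mathcal S$ because the equation holds weakly across $\mathcal S$ by Proposition~\ref{prop2.3}. Covering the compact region by finitely many balls and chaining the Harnack estimate, either $u\equiv0$ on a connected component or $\operatorname{ess\,inf}u>0$ on compact sets. The first alternative is impossible since $u\to1$ at infinity. On the end, $u$ is close to $1$ for large $r$, and this gives $\operatorname*{ess\,inf}_{M^n}u>0$.
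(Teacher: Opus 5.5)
Your proof is correct and follows the paper's argument in all essentials. It uses the same coercivity estimate from the smallness of $\mathbf{c}_-$ and Lemma~\ref{sobolev}, De Giorgi--Nash--Moser across $\mathcal S$, the weighted asymptotic expansion, the flux identity for $\mathcal C$, testing with $u_-$, and the weak Harnack inequality. The only real difference is that you get $w$ in one step by Lax--Milgram on the homogeneous space $\mathcal D$, whereas the paper solves Dirichlet problems on an exhaustion $\Omega_i$ and extracts a limit from uniform $L^{2n/(n-2)}$ and Moser bounds; your route is slightly more economical, since it gives existence and uniqueness at once without a diagonal extraction. Your reading of $W^{1,2}(M^n,g)$ as $\nabla u\in L^2$ with $u\in W^{1,2}_{\mathrm{loc}}$ is the sensible one, since $u\to1$ at infinity.
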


\begin{proof}
Let $\{\Omega_i\}_{i=1}^{\infty}$ be an exhaustion of $M^n$ by precompact connected open sets, each of which has a boundary $\partial \Omega_i\subset \mathcal{E}$ that is a coordinate sphere.  For each $i$, the kernel of $\Delta_g -\mathbf{c}$ in $W^{1,2}_0(\Omega_i,g)$ is trivial. To see this, observe that if a function $w\in\ker(\Delta_g-\mathbf{c})\cap W^{1,2}_0(\Omega_i,g)$ then utilizing Lemma \ref{sobolev}, H\"{o}lder's inequality, and the notion of weak solution produces
\begin{align}\label{gradient-end}
\int_{\Omega_i} |\nabla w|_g^2 dV_g&=-\int_{\Omega_i}\mathbf{c}w^2 dV_g \leq \left(\int_{M^n} |\mathbf{c}_-|^{\frac{n}{2}} dV_g\right)^{\frac{2}{n}} \left(\int_{M^n} w^\frac{2n}{n-2} dV_g\right)^{\frac{n-2}{n}}\\
&\leq \frac{1}{2C_*} \left(\int_{M^n} w^\frac{2n}{n-2}dV_g \right)^{\frac{n}{n-2}} 
\leq \frac{1}{2} \int_{M^n} |\nabla w|_g^2dV_g = \frac{1}{2} \int_{\Omega_i} |\nabla w|_g^2 dV_g,\nonumber
\end{align}
and hence $w=0$ in $\Omega_i$. It follows by elliptic theory that there is then a unique weak solution 
$w_i\in C^{\infty}(\Omega_i\setminus \mathcal{S})\cap W_0^{1,2}(\Omega_i,g)$ to the Dirichlet problem
\begin{equation}\label{Prob1}
\Delta_g  w_i-\mathbf{c} w_i=\mathbf{c} \quad \text{ in } \Omega_i , \quad\quad\quad w_i =0 \quad \text{ on }\partial\Omega_i .
\end{equation}


We will now make uniform estimates for the sequence $\{w_i\}_{i=1}^{\infty}$. The same manipulations as in \eqref{gradient-end} yield
\begin{align}
\int_{M^n} |\nabla w_i|_g^2 dV_g &=-\int_{M^n} \mathbf{c}w_i^2 dV_g- \int_{M^n} \mathbf{c}w_i dV_g\\
&\leq \frac{1}{2C_*}\left(\int_{M^n}\! w_i^\frac{2n}{n-2} dV_g\right)^{\frac{n-2}{n}}\!\!\!\!\!\!+  \left(\int_{M^n} \!\mathbf{c}^{\frac{2n}{n+2}}dV_g \right)^{\frac{n+2}{2n}}\!\! \!\left(\int_{M^n} \!w_i^\frac{2n}{n-2}dV_g \right)^{\frac{n-2}{2n}}\!\!\!.\nonumber
\end{align}
This together with Lemma \ref{sobolev} implies that  
\begin{equation}\label{initial-iteration}
\left(\int_{M^n}w^{\frac{2n}{n-2}}_i dV_g \right)^{\frac{n-2}{2n}}\leq 2C_* \left(\int_{M^n} \mathbf{c}^{\frac{2n}{n+2}} dV_g\right)^{\frac{n+2}{2n}}. 
\end{equation}
Note that the right-hand side is finite since $\mathbf{c}\in C^{0,\alpha}_{-n-q_0}(\E)\cap L^{\infty}(M^n)$. 

Moser iteration then provides a uniform $L^{\infty}$ bound on any fixed compact subset of $M^n$. Note that this holds even across the singular set, since the coefficients of $\Delta_g$, when expressed in divergence form, are controlled in $L^{\infty}$ near $\mathcal{S}$. We may then apply Schauder estimates to obtain uniform $C^{2,\alpha}$ bounds on compact subsets of $M^n \setminus\mathcal{S}$. 
Therefore, a diagonal argument may be used to extract a $C^{2,\alpha}$-convergent subsequence (with notation unchanged for convenience) $w_{i} \rightarrow w$. This limit solves the equation of \eqref{Prob1} on $M^n$, and by elliptic regularity as well as the estimates above $w\in C^{\infty}(M^n \setminus\mathcal{S})\cap W^{1,2}(M^n,g)\cap L^{\infty}(M^n)$.  Furthermore, the basic asymptotic analysis of \cite[Corollary A.38]{Lee} shows that $w-\frac{\mathcal{C}}{r^{n-2}}\in C^{2,\alpha}_{2-n-q'}(\E)$, for some constant $\mathcal{C}$ where $q'=\min\{1,q_0\}$. By setting $u=1+w$, we find that this function satisfies equation \eqref{afh09g} along with all other desired properties, except perhaps positivity.

To show positivity, let $u_{-}=\min\{0,u\}$ and observe that $u_- \in W^{1,2}_0(\Omega_i,g)$. Using the notion of weak solution and $u^-$ as a test function produces
\begin{equation}
\int_{\Omega_i}\left(\langle \nabla u,\nabla u_-\rangle_g +\mathbf{c}uu_-\right)dV_g =0.
\end{equation}
We may then replace $u$ with $u_-$ in this equation, and ignore the positive part of $\mathbf{c}$ to find
\begin{equation}
\int_{\Omega_i}|\nabla u_-|_g^2 dV_g \leq \int_{\Omega_i}|\mathbf{c}_-| u_-^2 dV_g.
\end{equation}
The same strategy as in \eqref{gradient-end} then implies that $u_-$ vanishes in $\Omega_i \setminus\mathcal{S}$. Since this sequence of domains is an exhaustion, we obtain $u\geq 0$ on $M^n \setminus \mathcal{S}$. The weak Harnack inequality \cite[Theorem 8.18]{GT} then shows that $u$ must be strictly positive away from the singular set. In fact, since the
equation holds weakly across \(\mathcal S\) and \(g\) is uniformly equivalent
to a smooth background metric, the same weak Harnack inequality applies
on balls intersecting \(\mathcal S\) as well. Hence \(u\) has a representative
which is strictly positive on \(M^n\), and
\begin{equation}
        \operatorname*{ess\,inf}_{B}u>0
\end{equation}
for every relatively compact ball \(B\subset M^n\). Together with the
asymptotic condition \(u\to1\), this gives a global positive lower bound for
\(u\) in the essential-infimum sense.

Lastly, to obtain the monopole expression let $\phi_r\in C_c^{\infty}(M^n)$ be smooth cut-off functions
such that $\phi_r=1$ on $M^n_r$ and $\phi_r=0$ on $M^n \setminus M^n_{2r}$ with $|\nabla \phi_r|_g \leq 2/r$, in which $M_r^n$ denotes the bounded component of $M^n \setminus S_r$ where $S_r\subset\mathcal{E}$ is the $r$-coordinate sphere. Then using the definition of weak solution and integrating by parts produces
\begin{align}
\int_{M^n}\!\mathbf{c}u\phi_r dV_g =-\int_{M^n}\!\langle\nabla u,\nabla\phi_r \rangle_g dV_g
&=\int_{M^n_{2r}\setminus M^n_r}\!\!\!\phi_r \Delta_g u dV_g +\int_{S_{r}}\nu(u) dA_g\\
&=\int_{M^n_{2r}\setminus M^n_r}\!\!\!\phi_r\mathbf{c}u dV_g -(n-2)\omega_{n-1}\mathcal{C} +o(1),\nonumber
\end{align}
where $\nu$ is the unit normal pointing towards infinity. Since $\mathbf{c}=O(r^{-n-q_0})$, the bulk integral on the right-hand side tends to zero as $r\rightarrow \infty$, and the desired result follows.
\end{proof}

\subsection{Proof of Theorem \ref{density}}

\begin{proposition}\label{deform-positive-scalar} 
Assume the setting and notation of Theorem \ref{density}. 
Then for each \(\varepsilon>0\)
there exists a complete asymptotically flat \(L^\infty\)-metric
\(\hat g\) on \(M^n\)
with the following properties.
\begin{enumerate}
\item There exists a uniform constant $c>0$ such that $\|\hat{g}-g\|_{L^{\infty}(M^n,g_0)}\leq c\varepsilon$.
\item \(\hat g\) is smooth on \(M^n\setminus \mathcal{S}\) and \(R_{\hat g}> 0\) on
\(M^n\setminus \mathcal{S}\).
\item If \(m\) and \(\hat m\) denote the ADM masses of \(g\) and \(\hat g\) on $\mathcal{E}$, then
$\hat m=m+2\varepsilon$.
\end{enumerate}
\end{proposition}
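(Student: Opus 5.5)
We will obtain \(\hat g\) by a conformal deformation \(\hat g=u^{\frac{4}{n-2}}g\), where \(u\) solves an equation of the form \eqref{afh09g} with a carefully chosen potential. Fix a smooth function \(f\geq 0\) on \(M^n\) that is positive everywhere, lies in \(C^{0,\alpha}_{-n-q_0}(\mathcal E)\) for some \(q_0>0\), and satisfies \(\|f\|_{L^\infty}+\|f\|_{L^{2n/(n+2)}}\leq 1\); for instance \(f=\langle r\rangle^{-n-1}\) on the end, extended positively inside. With \(c_n=\frac{4(n-1)}{n-2}\) and a parameter \(t>0\), set
\begin{equation*}
\mathbf c_t=\frac{R_g-t f}{c_n}.
\end{equation*}
Strictly speaking \(R_g\) must be handled separately: it is smooth and bounded on compact subsets of \(M^n\setminus\mathcal S\), but not necessarily in \(L^\infty\) near \(\mathcal S\). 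We therefore take \(\mathbf c_t=-tf/c_n\) on a neighborhood of the compact set containing \(\mathcal S\) and on the end, using a cutoff. More simply, we choose \(\mathbf c_t=-\frac{t}{c_n}f\) on all of \(M^n\). Then \(R_g\geq0\) gives, by the conformal formula, on \(M^n\setminus\mathcal S\)
\begin{equation*}
R_{\hat g}=u^{-\frac{n+2}{n-2}}\bigl(-c_n\Delta_g u+R_g u\bigr)=u^{-\frac{4}{n-2}}\bigl(R_g+tf\bigr)>0,
\end{equation*}
provided \(u>0\) and \(\Delta_g u=\mathbf c_t u\). Since \(\mathbf c_t\in C^{0,\alpha}_{-n-q_0}(\mathcal E)\cap C^\infty\cap L^\infty\), and \(\|(\mathbf c_t)_-\|_{L^{n/2}}\leq Ct\) is below \(1/(2C_*)\) for small \(t\), Theorem \ref{positive-solu} provides a positive solution \(u_t\in C^\infty(M^n\setminus\mathcal S)\cap W^{1,2}\cap L^\infty\) with \(\operatorname{ess\,inf}u_t>0\) and expansion \(u_t=1+\mathcal C_t r^{2-n}+O(r^{2-n-q'})\). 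This establishes item (2), and shows that \(\hat g\) is an \(L^\infty\)-metric that is smooth off \(\mathcal S\) and asymptotically flat. The decay class is preserved because \(u_t-1=O(r^{2-n})\) and \(2-n<-q\).

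For the \(L^\infty\) closeness in item (1), we need \(\|u_t-1\|_{L^\infty}\leq Ct\). Write \(w_t=u_t-1\), which solves \(\Delta w_t-\mathbf c_t w_t=\mathbf c_t\). Estimate \eqref{initial-iteration} gives \(\|w_t\|_{L^{2n/(n-2)}}\leq 2C_*\|\mathbf c_t\|_{L^{2n/(n+2)}}\leq Ct\). Moser iteration, which is valid across \(\mathcal S\) because the divergence-form coefficients are \(L^\infty\), together with the decay on the end, then yields \(\sup|w_t|\leq Ct\) with a constant uniform in \(t\le t_0\). Hence \(|\hat g-g|_{g_0}\leq C|u_t^{4/(n-2)}-1|\,\Lambda\leq c\,t\).

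For the mass, the standard computation for \(g\) multiplied by \(u^{4/(n-2)}\) with \(u=1+\mathcal C r^{2-n}+\dots\) gives \(\hat m=m+2\mathcal C_t\). Here
\begin{equation*}
\mathcal C_t=-\frac{1}{(n-2)\omega_{n-1}}\int\mathbf c_t u_t\,dV_g=\frac{t}{c_n(n-2)\omega_{n-1}}\int f u_t\,dV_g=:t\,\Phi(t)>0.
\end{equation*}
To obtain exactly \(\hat m=m+2\varepsilon\), we need \(t\Phi(t)=\varepsilon\). The map \(t\mapsto t\Phi(t)\) is continuous, since solutions depend continuously on \(t\) in \(L^{2n/(n-2)}\) by the coercive estimate, and \(\int fu_t\) is continuous because \(f\in L^{2n/(n+2)}\). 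It vanishes at \(t=0\), and \(\Phi(t)\to\Phi(0)=\frac{\int f}{c_n(n-2)\omega_{n-1}}>0\). Hence for small \(\varepsilon\) the intermediate value theorem gives \(t\approx\varepsilon/\Phi(0)\), so \(t\leq C\varepsilon\) and item (1) follows with a uniform constant. For larger \(\varepsilon\), one either restricts to small \(\varepsilon\), which suffices for Theorem \ref{density}, or rescales \(f\).

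The main obstacle is the uniform \(L^\infty\) estimate \(\|u_t-1\|_{L^\infty}\le Ct\) across \(\mathcal S\), together with the continuity in \(t\). Both rely on the global Sobolev inequality of Lemma \ref{sobolev} and on De Giorgi–Nash–Moser theory for \(L^\infty\) coefficients, which is where the capacity statement of Proposition \ref{prop2.3} is used. A secondary point is verifying the mass formula, which only uses the smooth end.
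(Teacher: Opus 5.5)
Your argument is correct for small $\varepsilon$. That is also all the paper's proof delivers, since it requires $\varepsilon<\hat{\mathcal C}$. However, your route differs from the paper's.

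\textbf{The paper's route.} The paper solves the PDE only once. It fixes a negative potential $\hat{\mathbf c}$ with $\|\hat{\mathbf c}\|_{L^{n/2}}\le \frac{1}{2C_*}$ and obtains a single solution $u$ with monopole coefficient $\hat{\mathcal C}>0$. It then uses the affine interpolation $\bar u:=1+\lambda(u-1)$, with $\lambda=\varepsilon/\hat{\mathcal C}\in(0,1)$, as the conformal factor. The key observation is that $R_{\hat g}>0$ only requires the conformal factor to be positive and strictly superharmonic; it need not solve an equation of the form $\Delta_g \bar u=\mathbf c\,\bar u$.

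\textbf{Why it is efficient.} Here $\Delta_g\bar u=\lambda\hat{\mathbf c}u<0$ and $\bar u\geq\min\{1,\operatorname{ess\,inf}u\}>0$, so positivity follows at once. The monopole coefficient of $\bar u$ is exactly $\lambda\hat{\mathcal C}=\varepsilon$, and $\|\bar u-1\|_{L^\infty}=\lambda\|u-1\|_{L^\infty}$. Hence the exact mass $m+2\varepsilon$ and the linear bound $c\varepsilon$ come for free. No continuous dependence on a parameter, no uniqueness argument, no intermediate value theorem, and no Moser bound uniform in a parameter is needed.

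\textbf{Your route.} You solve a one-parameter family $\Delta_g u_t=\mathbf c_t u_t$ and tune $t$ by the intermediate value theorem. This is sound:
\begin{itemize}
\item The energy estimate applied to $u_t-u_s$, which is justified by the capacity property at $\mathcal S$ and the $r^{2-n}$ decay at infinity, gives uniqueness and Lipschitz dependence of $u_t-1$ in $L^{2n/(n-2)}$.
\item Positivity and continuity of $\Phi$ on $[0,t_0]$ give $t\le \varepsilon/\min_{[0,t_0]}\Phi\le C\varepsilon$.
\item Moser iteration on $g_0$-balls of fixed size, with constants uniform because $g$ is uniformly equivalent to the asymptotically flat $g_0$, gives $\|u_t-1\|_{L^\infty}\le Ct$.
\end{itemize}
These extra steps buy little beyond the cosmetic feature that each conformal factor solves an exact linear equation and $R_{\hat g}=u_t^{-\frac{4}{n-2}}(R_g+tf)$.

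\textbf{A small slip.} The inequality $2-n<-q$ need not hold, since only $q>\frac{n-2}{2}$ is assumed. What you actually need, and what is true, is that the conformal metric is asymptotically flat with decay rate $\min\{q,n-2\}>\frac{n-2}{2}$.
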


\begin{proof}
Choose a negative function $\hat{\mathbf{c}}\in C^\infty(M^n)\cap C^{0, \alpha}_{-n-q_0}(\E)$ for some $\alpha\in(0,1)$ and $q_0 >0$, with 
$\|\hat{\mathbf{c}}\|_{L^{n/2}(M^n)}\leq \frac{1}{2C_*}$, and consider the following equation with prescribed asymptotics
\begin{equation}\label{big1}
    \Delta_g u-\hat{\mathbf{c}} u=0\quad \text{ on }M^n,\qquad\quad u(x)\rightarrow 1 \quad\text{ as } |x|\rightarrow \infty.
\end{equation}
We now apply Theorem \ref{positive-solu} to find a positive solution $u\in C^{\infty}(M^n\setminus \mathcal{S})\cap W^{1,2}(M^n,g)\cap L^{\infty}(M^n)$ of \eqref{big1} with $u-(1+\frac{\hat{\mathcal{C}}}{r^{n-2}})\in C^{2, \alpha}_{2-n-q'}(\mathcal{E})$, where
\begin{equation}
\hat{\mathcal{C}}=-\frac{1}{(n-2)\omega_{n-1}}\int_{M^n} \hat{\mathbf{c}} udV_g>0. 
\end{equation}
For each $\varepsilon\in (0,\hat{\mathcal{C}})$ set
\begin{equation}
    f=\hat{\mathcal{C}}^{-1}\varepsilon u+(1-\hat{\mathcal{C}}^{-1}\varepsilon), \qquad\qquad \hat{g}=f^{\frac{4}{n-2}}g.
\end{equation}
This defines a complete asymptotically flat $L^{\infty}$-metric on $M^n$, which clearly satisfies property $(1)$. Moreover, its scalar curvature is given by
\begin{equation}
f^{\frac{4}{n-2}}R_{\hat{g}}=R_g-\frac{4(n-1)}{n-2}\frac{\Delta_g f}{f}=R_g -\frac{4(n-1)}{n-2}\frac{\hat{\mathcal{C}}^{-1}\varepsilon\hat{\mathbf{c}}u}{f}>0 
\end{equation}
on $M^n \setminus\mathcal{S}$. Lastly, a computation shows that the new mass takes the form
\begin{equation}
    \hat m=m+2\hat{\mathcal{C}}^{-1}\varepsilon\cdot \hat{\mathcal{C}}=m+2\varepsilon.
\end{equation}
\end{proof}

We next establish a slightly weaker version of Theorem \ref{density}, which differs from the desired result
by its nonnegative (instead of strictly positive) scalar curvature conclusion.

\begin{proposition}\label{weak-density} 
Assume the setting and notation of Theorem \ref{density}. 
Then for each \(\varepsilon>0\)
there exists a complete asymptotically flat \(L^\infty\)-metric
\(\tilde{g}\) on \(M^n\) 
with the following properties.
\begin{enumerate}
\item It holds that $\|\tilde{g}-g\|_{L^{\infty}(M^n,g_0)}<\varepsilon$.
\item \(\tilde{g}\) is smooth on \(M^n\setminus \mathcal{S}\) and \(R_{\tilde{g}}\geq 0\) on
\(M^n\setminus \mathcal{S}\).
\item On the end $\mathcal{E}$ the new metric \(\tilde{g}\) is harmonically asymptotically flat.
\item If \(\tilde{m}\) and \(m\) denote the ADM masses of \(\tilde{g}\) and \(g\) on $\mathcal{E}$, then
$|\tilde{m}-m|<\varepsilon$.
\end{enumerate}
\end{proposition}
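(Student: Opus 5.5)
We follow the scheme of Bray's density lemma \cite[Lemma 1]{Bray2001}, with Theorem \ref{positive-solu} replacing the smooth elliptic theory. Everything happens on the end $\mathcal E$, so the singular set is untouched except through the global conformal factor.

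\textbf{Step 1: cut off the metric on the end.} Fix a smooth cut-off $\chi_\sigma$ on $\mathcal E$ with $\chi_\sigma=1$ for $r\le \sigma$, $\chi_\sigma=0$ for $r\ge 2\sigma$, and $|\mathring\nabla^l\chi_\sigma|\le C\sigma^{-l}$. Define
\begin{equation}
\bar g_\sigma=\chi_\sigma g+(1-\chi_\sigma)\delta \text{ on } \mathcal E, \qquad \bar g_\sigma=g \text{ elsewhere}.
\end{equation}
Then $\bar g_\sigma=g$ near $\mathcal S$, $\bar g_\sigma$ is Euclidean for $r\ge 2\sigma$, and the decay assumption with $q>\frac{n-2}{2}$ gives
\begin{equation}
\|\bar g_\sigma-g\|_{L^\infty}=O(\sigma^{-q}), \qquad |R_{\bar g_\sigma}|\le C\sigma^{-q-2} \text{ on the annulus } \sigma\le r\le 2\sigma,
\end{equation}
while $R_{\bar g_\sigma}=R_g\ge0$ for $r\le\sigma$ away from $\mathcal S$. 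Consequently
\begin{equation}
\|R_{\bar g_\sigma}^-\|_{L^{n/2}}\le C\sigma^{-q-2}\sigma^{2}\to0 \quad\text{and}\quad \|R_{\bar g_\sigma}\|_{L^{1}(\sigma\le r\le 2\sigma)}=O(\sigma^{n-2-q}).
\end{equation}

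\textbf{Step 2: conformal correction.} Solve
\begin{equation}
\Delta_{\bar g_\sigma}u-\mathbf c\,u=0, \qquad \mathbf c=\tfrac{n-2}{4(n-1)}R_{\bar g_\sigma}\chi_{\sigma},
\end{equation}
or simply with $\mathbf c=\tfrac{n-2}{4(n-1)}R_{\bar g_\sigma}$ restricted to the nonsmooth-free region. The coefficient $\mathbf c$ is bounded, smooth off $\mathcal S$, and compactly supported on the end, hence lies in $C^{0,\alpha}_{-n-q_0}(\mathcal E)$. Here lies a subtle point: Theorem \ref{positive-solu} requires $\mathbf c\in L^\infty(M^n)$, but $R_g$ may blow up near $\mathcal S$.

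To avoid this, use $\mathbf c=\tfrac{n-2}{4(n-1)}R_{\bar g_\sigma}$ only on the annulus $\sigma\le r\le2\sigma$, and $\mathbf c=0$ elsewhere. This choice is bounded, and its negative part has small $L^{n/2}$ norm for large $\sigma$, so Theorem \ref{positive-solu} applies (with $C_*$ uniform in $\sigma$, since the metrics are uniformly equivalent). It yields a positive $u$ with $\operatorname{ess\,inf}u>0$.

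Set $\tilde g=u^{\frac4{n-2}}\bar g_\sigma$. Then
\begin{equation}
R_{\tilde g}=u^{-\frac{n+2}{n-2}}\Bigl(R_{\bar g_\sigma}u-\tfrac{4(n-1)}{n-2}\Delta u\Bigr)
\end{equation}
equals $0$ on the annulus and equals $u^{-\frac4{n-2}}R_g\ge0$ inside. For $r\ge2\sigma$ the metric $\bar g_\sigma$ is flat and $u$ is harmonic there with expansion $1+\mathcal C r^{2-n}+O(r^{1-n})$. This gives harmonic asymptotics with mass $\tilde m=2\mathcal C$, where
\begin{equation}
\mathcal C=-\frac{1}{(n-2)\omega_{n-1}}\int\mathbf c u \,dV.
\end{equation}
Strictly, one should upgrade the expansion to the weighted class demanded in Definition \ref{HAF}; Euclidean harmonic expansion gives any $\mathring q<n-1$.

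\textbf{Step 3: estimates as $\sigma\to\infty$ (the main obstacle).} We must show that $u\to1$ uniformly and that $2\mathcal C\to m$. Uniform closeness follows from the estimate \eqref{initial-iteration} together with Moser iteration: since $\|\mathbf c\|_{L^{2n/(n+2)}}=O(\sigma^{\frac{n+2}{2}-q-2})\to0$ exactly when $q>\frac{n-2}{2}$, we get $\|u-1\|_{L^\infty}\to0$, and hence $\|\tilde g-g\|_{L^\infty(g_0)}<\varepsilon$.

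For the mass, the key identity is
\begin{equation}
\int_{\mathcal E} R_{\bar g_\sigma}\,dV=\text{(flux of }\bar g_\sigma\text{ at } r=2\sigma\text{, namely }0)-\text{(flux at } r=\sigma),
\end{equation}
and the flux at $r=\sigma$ is $2(n-1)\omega_{n-1}m+o(1)$ plus the term $\int_{r<\sigma}R_g$, which tends to $0$ by $R_g\in L^1$. Combining this with $u=1+o(1)$ on the annulus, $\int|\mathbf c|=O(\sigma^{n-2-q})$ (bounded by a quantity controlled by the $L^1$ norm), and the boundary integral computation, we obtain $2\mathcal C=m+o(1)$.

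The delicate part is controlling $\int\mathbf c(u-1)$. We handle it by H\"older, pairing $\|\mathbf c\|_{L^{2n/(n+2)}}$ with $\|u-1\|_{L^{2n/(n-2)}}$; both factors are $O(\sigma^{\frac{n-2}{2}-q})$, so the product tends to zero.
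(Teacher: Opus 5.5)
Your overall strategy is sound, but it differs from the paper's. You cut the metric off to the flat metric $\delta$, so $\bar g_\sigma$ has zero mass and the conformal factor must carry all of it. You therefore need $2\mathcal C\to m$, which comes from the flux identity $\int_{\sigma\le r\le 2\sigma}R_{\bar g_\sigma}\,dV=-2(n-1)\omega_{n-1}m+o(1)$: the outer flux is $0$, the inner flux is that of $g$, and the quadratic error is $O(\sigma^{n-2-2q})$.

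The paper instead glues $g$ to $g_{\mathrm{mod}}=(1+\frac{m}{2r^{n-2}})^{\frac{4}{n-2}}\delta$. This model is scalar-flat and has the same mass as $g$, so the inner and outer fluxes cancel, $\int\psi_sR_{g_s}\,dV=o(1)$, and the correction is small: $\mathcal C_s\to0$ and $m(\tilde g_s)=m+2\mathcal C_s\to m$.

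Both routes rest on the same two estimates. The first is the flux identity with its quadratic error. The second is the H\"older pairing $\|\mathbf c\|_{L^{2n/(n+2)}}\|u-1\|_{L^{2n/(n-2)}}=O(\sigma^{n-2-2q})$, which you rightly identify as the real control on $\int\mathbf c(u-1)$. A bare $o(1)$ bound on $u-1$ times $\int|\mathbf c|=O(\sigma^{n-2-q})$ would not suffice when $q<n-2$.

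Your version is the classical Schoen--Yau one. It gives $\tilde g=u^{\frac{4}{n-2}}\delta$ with $u$ Euclidean-harmonic near infinity, so harmonic asymptotics are immediate. The paper's version keeps $u_s$ a small perturbation of $1$ with vanishing monopole.

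Your remark that $\int_{r<\sigma}R_g$ tends to $0$ is wrong as written, and it is also unnecessary. The flux of $\bar g_\sigma$ through $S_\sigma$ equals that of $g$, and it converges to $2(n-1)\omega_{n-1}m$ simply because the ADM mass exists.

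There is one concrete defect. Your final coefficient is $\mathbf c=\frac{n-2}{4(n-1)}R_{\bar g_\sigma}$ on $\{\sigma\le r\le 2\sigma\}$ and $0$ elsewhere. It jumps across the sphere $\{r=\sigma\}$, because $R_{\bar g_\sigma}=R_g$ there need not vanish. Consequently:
\begin{enumerate}
\item $\mathbf c$ is not in $C^{0,\alpha}\cap C^\infty(M^n\setminus\mathcal S)$, so Theorem \ref{positive-solu} does not apply as stated.
\item Even a weak solution $u$ is only $C^{1,\alpha}$ across that sphere.
\item Hence $\tilde g$ is not smooth on $M^n\setminus\mathcal S$, and property (2) fails.
\end{enumerate}

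The repair is exactly the paper's $\psi_s$ device. Take $\mathbf c=\frac{n-2}{4(n-1)}\psi R_{\bar g_\sigma}$ with $\psi$ smooth, $\psi=0$ for $r\le\sigma/2$ and $\psi=1$ for $r\ge\sigma$. Then $R_{\tilde g}=u^{-\frac{4}{n-2}}(1-\psi)R_{\bar g_\sigma}\ge 0$, since $1-\psi\neq 0$ only where $R_{\bar g_\sigma}=R_g\ge0$. The negative part of $\mathbf c$ still lives only in the annulus. The extra contribution to $\mathcal C$ satisfies $0\le\int\psi R_g u\,dV\le \sup u\int_{r\ge\sigma/2}R_g\,dV\to 0$, and this is where $R_g\in L^1(\mathcal E)$ genuinely enters. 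So $2\mathcal C\to m$ still holds.
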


\begin{proof}
In the asymptotic end, choose a smooth one-parameter family of radial cut-off functions \(0\leq \phi_s\leq 1\),
\(s\gg 1\), such that
\begin{equation}
\phi_s(r)=1 \quad\text{for } r\leq 2s,\qquad
\phi_s(r)=0 \quad\text{for } r\geq 4s,
\end{equation}
and
\begin{equation}
        |\nabla^k\phi_s|_{\delta}\leq C_k s^{-k}
        \qquad\text{on } \{2s\leq r\leq 4s\}.
\end{equation}
Extend \(\phi_s\) trivially to the rest of \(M^n\). Let \(g_{\mathrm{mod}}\)
be the following harmonically asymptotically flat model metric on \(\mathcal E\) with the same
mass as \(g\):
\begin{equation}
        g_{\mathrm{mod}}
        =
        \left(1+\frac{m}{2r^{n-2}}\right)^{\frac{4}{n-2}}\delta,
\end{equation}
and define
\begin{equation}
        g_s=\phi_s g+(1-\phi_s)g_{\mathrm{mod}} .
\end{equation}
Then \(g_s=g\) on \(\{r\leq 2s\}\), while
\(g_s=g_{\mathrm{mod}}\) on \(\{r\geq 4s\}\). In particular, \(g_s\) is a
complete asymptotically flat \(L^\infty\)-metric with singular set \(\mathcal S\),
and its chosen end is harmonically asymptotically flat with mass $m$ for each $s$.
Moreover \(g_s\to g\) uniformly on $M^n \setminus \mathcal{S}$ as $s\rightarrow\infty$.

Now choose a second family of radial cut-off functions \(0\leq \psi_s\leq 1\) with
\begin{equation}
        \psi_s(r)=0 \quad\text{for } r\leq s,\qquad
        \psi_s(r)=1 \quad\text{for } r\geq 2s,
\end{equation}
and
\begin{equation}
        |\nabla^k\psi_s|_{\delta}\leq C_k s^{-k}
        \qquad\text{on } \{s\leq r\leq 2s\}.
\end{equation}
Set
\begin{equation}
       \mathbf{c}_s=\frac{n-2}{4(n-1)}\psi_s R_{g_s}.
\end{equation}
Since \(g_s=g\) on \(\{r\leq 2s\}\) and \(R_g\geq 0\) there, while
\(g_s=g_{\mathrm{mod}}\) for \(r\geq 4s\), the negative part of \(\mathbf{c}_s\) is
supported in the transition annulus \(\{2s\leq r\leq 4s\}\). Note that in this annulus
$|R_{g_s}|\leq C r^{-q-2}$, and hence
\begin{equation}
        \|\mathbf{c}_{s-}\|_{L^{n/2}(M^n,g_s)}
        \leq C\left(
        \int_{2s\leq r\leq 4s} r^{-\frac n2(q+2)}r^{n-1}\,dr
        \right)^{2/n}
        \leq C s^{-q}
        \longrightarrow 0 .
\end{equation}
Thus, for \(s\) sufficiently large, the smallness hypothesis in Theorem \ref{positive-solu} is
satisfied. We may therefore solve
\begin{equation}\label{oah0fh08a0gh}
        \Delta_{g_s}u_s-\mathbf{c}_su_s=0,
        \qquad
        u_s\to 1 \quad\text{as } r\to\infty .
\end{equation}
Theorem \ref{positive-solu} gives a solution \(u_s\) bounded below by a positive constant in the
essential-infimum sense, which is smooth on \(M^n\setminus \mathcal{S}\), and with
asymptotic expansion
\begin{equation}\label{fonq0h-0q9h09gnh}
        u_s=1+\frac{\mathcal C_s}{r^{n-2}}
        +O_{2,\alpha}(r^{2-n-q'}),\quad\quad
        \mathcal C_s
        =
        -\frac{1}{(n-2)\omega_{n-1}}
        \int_{M^n} \mathbf{c}_s u_s\,dV_{g_s}.
\end{equation}
Note that we may take $q'=1$ and any $\alpha\in(0,1)$ since $\mathbf{c}_s$ vanishes identically for $r\geq 4s$.

We claim that $\mathcal C_s\longrightarrow 0$ as $s\rightarrow\infty$. Write
\begin{equation}\label{foafih0i9hh}
        \int_{M^n} \mathbf{c}_su_s\,dV_{g_s}
        =
        \int_{M^n} \mathbf{c}_s\,dV_{g_s}
        +
        \int_{M^n} \mathbf{c}_s(u_s-1)\,dV_{g_s},
\end{equation}
and observe that the energy estimate \eqref{initial-iteration} in the proof of Theorem \ref{positive-solu} gives 
\begin{equation}\label{afoh0wa9h09}
\|u_s-1\|_{L^{2n/(n-2)}(M^n,g_s)}\leq 2C_* \|\mathbf{c}_s\|_{L^{2n/(n+2)}(M^n,g_s)}, 
\end{equation}
while
$\|\mathbf{c}_s\|_{L^{2n/(n+2)}(M^n,g_s)}\rightarrow 0$
because \(|R_{g_s}|\leq Cr^{-q-2}\) on the transition region and
\(q>(n-2)/2\). Hence H\"{o}lder's inequality produces
\begin{equation}
        \left|
        \int_{M^n} \mathbf{c}_s(u_s-1)\,dV_{g_s}
        \right|\
        \leq
        \|\mathbf{c}_s\|_{L^{2n/(n+2)}}
        \|u_s-1\|_{L^{2n/(n-2)}}
        \longrightarrow 0.
\end{equation}
To see that the first integral on the right-hand side of \eqref{foafih0i9hh} also converges to zero,
write $g_s=\delta+h_s$ on the transition annulus \(\{2s\leq r\leq 4s\}\).
Note that the asymptotic flatness assumption implies
\begin{equation}
        h_s=O(r^{-q}),\qquad
        \partial h_s=O(r^{-q-1}),\qquad
        \partial^2 h_s=O(r^{-q-2}),
\end{equation}
uniformly in \(s\). Furthermore, the scalar curvature expansion in
Cartesian coordinates has the form
\begin{equation}\label{f0qjt-09jhq-}
        R_{g_s}
        =
        \partial_i\partial_j(h_s)_{ij}
        -
        \partial_i\partial_i(h_s)_{jj}
        +
        Q_s,
\end{equation}
where the quadratic error is estimated by
\begin{equation}
        |Q_s|\leq C\bigl(|\partial h_s|^2
        +|h_s|\,|\partial^2 h_s|\bigr)\leq Cr^{-2q-2},
\end{equation}
and thus
\begin{equation}
        \int_{\{2s\leq r\leq 4s\}} |Q_s|\,dV_\delta
        \leq C s^{n-2q-2}\longrightarrow 0.
\end{equation}
It remains to treat the linear part of \eqref{f0qjt-09jhq-}.
By integration by parts on the annuli and by the ADM flux formula, its integral
over the exterior region equals, up to an error tending to zero, the difference
between the ADM flux of \(g_s\) through a large coordinate sphere and the ADM
flux of \(g\) through the inner sphere. The outer flux is the ADM mass of
\(g_{\mathrm{mod}}\), while the inner flux converges to the ADM mass of \(g\).
Since \(g_{\mathrm{mod}}\) was chosen to have the same ADM mass as \(g\), these
two fluxes cancel. Consequently
\begin{equation}
\int_{M^n} \psi_s R_{g_s}\,dV_{g_s}=o(1),
\end{equation}
and it follows that $\mathcal{C}_s \rightarrow 0$.

Finally define $\tilde{g}_s=u_s^{\frac{4}{n-2}}g_s$ and set $\tilde{g}=\tilde{g}_{s_{\varepsilon}}$ for sufficiently large $s_{\varepsilon}$ depending on $\varepsilon$. Then $\tilde{g}$ is a complete asymptotically flat $L^{\infty}$-metric on $M^n$, which is smooth away from $\mathcal{S}$.
Observe that by the energy estimate \eqref{afoh0wa9h09} from Theorem \ref{positive-solu} and Moser iteration $\| u_s -1\|_{L^{\infty}(M^n,g_0)}<\varepsilon$ for large $s$, and thus the same holds for the deviation of $\tilde{g}_s$ from $g$, establishing property $(1)$. 
By equation \eqref{oah0fh08a0gh} and the conformal scalar curvature formula 
\begin{equation}
        R_{\tilde g_s}
        =
        u_s^{-\frac{n+2}{n-2}}
        \left(
        -\frac{4(n-1)}{n-2}\Delta_{g_s}u_s
        +R_{g_s}u_s
        \right)= u_s^{-\frac{4}{n-2}}(1-\psi_s)R_{g_s}.
\end{equation}
On \(\{r\leq 2s\}\) one has \(\psi_s \leq 1\) and \(g_s=g\), so
$R_{\tilde g_s}\geq0$ in this region away from $\mathcal{S}$.
On \(\{r\geq 2s\}\) one has \(\psi_s=1\) so $R_{\tilde g_s}=0$ in this region, establishing property $(2)$.
Property $(3)$ concerning the harmonic asymptotics of $\tilde{g}_s$, follows directly from the expansion \eqref{fonq0h-0q9h09gnh}
and structure of $g_{\text{mod}}$. Furthermore, the relation between the ADM masses is
\begin{equation}
        m(\tilde g_s)
        =
        m(g_s)+2\mathcal C_s
        =
        m(g)+2\mathcal C_s .
\end{equation}
Property $(4)$ then follows since \(\mathcal C_s\to0\).
\end{proof}


\begin{proof}[Proof of Theorem \ref{density}]
Consider the case of a single asymptotically flat end. First, apply Proposition
\ref{weak-density} to obtain a new metric $\tilde{g}$ on $M^n$ satisfying all desired 
properties, except that its scalar curvature is only known to be nonnegative instead of
strictly positive. Next, apply Proposition \ref{deform-positive-scalar} to $(M^n, \tilde{g})$
to obtain $(M^n , \hat{g})$, which satisfies all desired properties including that of positive scalar
curvature; note that the conformal change in the proof of this proposition preserves the harmonic asymptotics
of the end. Thus, setting $g'=\hat{g}$ yields the desired conclusion. The case of multiple asymptotically
flat ends may be treated in an analogous way with straightforward modifications.
\end{proof}

\section{Blow-up of Singular Sets}
\label{sec3}

In this section the singular set will be removed by a two-stage blow-up procedure. First, using a Green's function with pole on the codimension greater than $3-\frac{2}{n}$ singular set, we conformally deform the original $L^\infty$-metric to obtain a complete metric on the regular set, and a positive scalar-curvature-type inequality. We then construct suitable $\mu$-bubbles and derive a stability inequality strong enough to pass to the next dimension. Finally, the GMT singular set arising on the minimizing hypersurface is blown up by a second conformal deformation, allowing the dimension-reduction argument to continue. 

\subsection{Blow-up of the singular set for \texorpdfstring{$L^{\infty}$}{Linfty}-metrics}

Let \((M^n,g)\) be a complete harmonically asymptotically Euclidean
\(L^\infty\)-manifold with background metric $g_0$, and let \(\mathcal S\) denote its singular set.  In order to build
the desired conformal factor, we will employ Green's functions with pole at points of \(\mathcal S\). Choose precompact open neighborhoods \(U_1\) and \(U_2\) of $\mathcal{S}$, such that $\partial U_2$ is smooth and 
\begin{equation}
        \mathcal S\subset U_1,\qquad
        \overline{U}_1\subset U_2 .
\end{equation}
The existence of Green's functions for uniformly elliptic operators with $L^\infty$ coefficients, and their basic properties, has been achieved in \cite[Theorem~1.1]{MR657523}, see also \cite{MR161019} and \cite[Lemma~A.1]{ChengLeeTam2022}. The relevant form of this result, needed for our purposes, is recorded here. 

\begin{lemma}[Existence and estimates for the Green's function]
\label{exist-Green}
Let \((U_2,g|_{U_2})\) and \(\mathcal S\) be as above.  Then there exists a
Dirichlet Green's function
\begin{equation}
        G:U_2\times U_2\to \mathbb R\cup\{\infty\}
\end{equation}
associated with the the Laplace-Beltrami operator on \(U_2\).  Moreover, for each
\(x\in \mathcal S\), the function
\begin{equation}
        y\mapsto G(x,y)
\end{equation}
is positive and \(g\)-harmonic on \(U_2\setminus \mathcal S\).  In addition,
there exists a constant \(C>0\), independent of \(x\in\mathcal S\), such that
for all \(x\in\mathcal S\) and \(y\in U_1\setminus\mathcal S\),
\begin{equation}\label{f0a-9fjh-9qjhh}
        C^{-1} d_{g_0}(x,y)^{2-n}
        \leq G(x,y)
        \leq C d_{g_0}(x,y)^{2-n}.
\end{equation}
On the regular set \(U_2\setminus\mathcal S\), the function \(G(x,\cdot)\) is
smooth.
\end{lemma}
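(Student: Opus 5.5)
The plan is to reduce Lemma~\ref{exist-Green} to the classical theory of Green's functions for divergence-form operators with bounded measurable coefficients, namely Grüter--Widman \cite[Theorem~1.1]{MR657523} and Littman--Stampacchia--Weinberger \cite{MR161019}. First I cover \(\overline U_2\) by finitely many coordinate charts and, using a partition of unity, write the Laplace--Beltrami operator weakly as
\begin{equation}
  \Delta_g u=\frac{1}{\sqrt{\det g}}\,\partial_i\bigl(\sqrt{\det g}\,g^{ij}\partial_j u\bigr).
\end{equation}
The relevant operator is then \(L u=\partial_i(a^{ij}\partial_j u)\) with \(a^{ij}=\sqrt{\det g}\,g^{ij}\), taken with respect to the measure \(dV_g\). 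By \eqref{Lambda} and compactness of \(\overline U_2\), the coefficients are measurable, symmetric and uniformly elliptic with constants depending only on \(\Lambda\) and \(g_0|_{\overline U_2}\). Because \(\partial U_2\) is smooth and \(U_2\) is precompact, the Grüter--Widman construction applies. It solves \(L G_\rho(x,\cdot)=-\)(normalized indicator of \(B_\rho(x)\)) in \(W^{1,2}_0(U_2)\), proves uniform weak-\(L^{n/(n-2)}\) bounds, and passes to the limit \(\rho\to0\). This yields a Dirichlet Green's function \(G(x,\cdot)\) for every \(x\in U_2\), including \(x\in\mathcal S\), since no regularity of the coefficients at the pole is needed. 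Its bounds \(G(x,y)\le K\,|x-y|^{2-n}\) and \(G(x,y)\ge K^{-1}|x-y|^{2-n}\) for \(|x-y|\le\tfrac12\operatorname{dist}(x,\partial U_2)\) hold with \(K\) depending only on the ellipticity constants. In the manifold setting I would follow \cite[Lemma~A.1]{ChengLeeTam2022}, where the chart-by-chart construction is carried out, or equivalently run the same approximation argument directly on \((U_2,g)\) using the Sobolev inequality of Lemma~\ref{sobolev} and Moser iteration.

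Next come the qualitative properties. Positivity of \(G(x,\cdot)\) on \(U_2\setminus\{x\}\) follows from the weak maximum principle for the approximants \(G_\rho\ge0\), combined with the weak Harnack inequality \cite[Theorem 8.18]{GT}. For harmonicity on \(U_2\setminus\mathcal S\) with \(x\in\mathcal S\), note that \(G(x,\cdot)\) is a weak solution of \(\Delta_g G=0\) on \(U_2\setminus\{x\}\supset U_2\setminus\mathcal S\). On the open set \(U_2\setminus\mathcal S\) the metric is smooth, so standard interior elliptic regularity (Schauder bootstrapping) upgrades the weak solution to a smooth classical harmonic function there.

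For the two-sided estimate \eqref{f0a-9fjh-9qjhh} on \(y\in U_1\setminus\mathcal S\), I pass between coordinate distance and \(d_{g_0}\) via the uniform comparability of \(g_0\) with the chart metrics on \(\overline U_2\). Set \(d_0=\operatorname{dist}_{g_0}(\overline U_1,\partial U_2)>0\).
\begin{itemize}
  \item When \(d_{g_0}(x,y)\le d_0/2\), both Grüter--Widman bounds apply directly with constants independent of \(x\in\mathcal S\).
  \item When \(d_{g_0}(x,y)>d_0/2\), we have \(y\in U_1\) and the distance is bounded above and below, so it suffices to bound \(G(x,y)\) above and below by constants. The upper bound is the global Grüter--Widman bound. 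For the lower bound, \(G(x,\cdot)\) is positive and harmonic on \(U_2\setminus \overline{B_{d_0/4}(x)}\), and it is bounded below by \(c\,d_0^{2-n}\) on the sphere \(\partial B_{d_0/4}(x)\). A Harnack chain inside the connected compact region \(\overline U_1\setminus B_{d_0/4}(x)\), staying a fixed distance \(d_0/2\) from \(\partial U_2\), then gives a uniform lower bound.
\end{itemize}
This requires \(U_1\) and \(U_2\) to be chosen connected, or the argument to be done componentwise with each component of \(\mathcal S\) handled separately.

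The main obstacle is uniformity in \(x\in\mathcal S\) of the lower bound away from the pole. The Harnack chain must have bounded length and stay uniformly inside \(U_2\), and \(\mathcal S\) may lie in several components of \(U_1\). I would handle this by compactness of \(\mathcal S\) and \(\overline U_1\), covering by finitely many balls of radius \(d_0/8\) with Harnack constants depending only on \(\Lambda\) and \(g_0\).
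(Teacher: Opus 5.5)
Your proposal is correct and follows the paper, which gives no argument beyond citing Gr\"uter--Widman \cite[Theorem~1.1]{MR657523}, Littman--Stampacchia--Weinberger \cite{MR161019}, and Cheng--Lee--Tam \cite[Lemma~A.1]{ChengLeeTam2022} for existence, positivity, regularity, and the two-sided bounds. Your Harnack-chain step for the lower bound away from the pole is a sound supplement, and so is your observation that this bound and the positivity of $G(x,\cdot)$ on all of $U_2\setminus\mathcal S$ require $U_2$ to be chosen connected; the paper leaves that assumption implicit, and it is harmless since Proposition~\ref{blow-up1} only uses the near-diagonal lower bound.
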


These Green's functions will now be used for the first blow-up, making $M^n\setminus\mathcal{S}$ into a complete manifold with
scalar curvature satisfying a positivity property.  For the remainder of this section, it will be assumed for convenience that the asymptotically flat manifold $(M^n, g)$ has only one end.

\begin{proposition}[Blow-up of low-codimension singular set]\label{blow-up1}
Let $(M^n, g)$ be a complete harmonically asymptotically flat $L^\infty$-manifold with end $\E$, and assume that its singular set satisfies the Hausdorff dimension upper bound, $\dim_{\mathcal{H}}\mathcal{S}< n-3 +\frac{2}{n}$. If the scalar curvature $R_g$ is positive on $M^n\setminus \mathcal{S}$, then there exists a complete harmonically asymptotically flat smooth metric $g'$ on $M^n \setminus\mathcal{S}$ with arbitrary ends, and a constant $\varepsilon_n>0$ such that the following properties hold.
\begin{enumerate}
\item The metric $g'$ agrees with $g$ on the end $\E$.
\item For each $\varepsilon\in (0, \varepsilon_n
)$, there exists a positive function $\rho\in C^{\infty}(M^n \setminus\mathcal{S})$ such that $\rho= 1$ on $\E$ and 
\begin{equation}\label{Q-term1}
 Q:=R_{g'}-2\Delta_{g'}\log\rho-\left(\frac{n-1}{n}+\varepsilon\right)|\nabla\log\rho|_{g'}^2>0\quad \text{ on }M^n\setminus\mathcal{S}.
\end{equation}
\end{enumerate}
\end{proposition}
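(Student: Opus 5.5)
The plan is to take $g'=u^{\frac{4}{n-2}}g$ for a conformal factor $u$ that blows up at $\mathcal S$ at a prescribed rate, and $\rho=u^{a}$ for a suitable negative power $a$. Concretely, $u=\eta\,w^{b}+(1-\eta)$, where $w$ is a positive $g$-harmonic function on $U_2\setminus\mathcal S$ built from the Green's functions of Lemma~\ref{exist-Green}, $b\geq 1$ is an exponent to be chosen, and $\eta$ is a cutoff equal to $1$ on $U_1$ and supported in $U_2$. The outcome should be that completeness of $g'$ forces $b$ to be large, positivity of $Q$ forces $b$ not too large, and the two requirements are compatible exactly when $\dim_{\mathcal H}\mathcal S<n-3+\frac2n$.

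\emph{Step 1: the pointwise computation on $U_1\setminus\mathcal S$, where $u=w^b$.} Write $\varphi=\log u$ and $t=1/b\in(0,1]$. Since $\Delta_g w=0$, we have $\Delta_g\varphi=-t|\nabla\varphi|_g^2$ and $\Delta_g u/u=(1-t)|\nabla\varphi|_g^2$. The conformal formulas then give
\begin{equation*}
u^{\frac{4}{n-2}}Q=R_g-|\nabla\varphi|_g^2\Big[\tfrac{4(n-1)}{n-2}(1-t)+2a(2-t)+\big(\tfrac{n-1}{n}+\varepsilon\big)a^2\Big],
\end{equation*}
using $u^{\frac{4}{n-2}}\Delta_{g'}\log\rho=a\big(\Delta_g\varphi+2|\nabla\varphi|_g^2\big)$. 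Choosing $a=-(2-t)/\big(\frac{n-1}{n}+\varepsilon\big)$ makes the bracket equal to
\begin{equation*}
\tfrac{4(n-1)}{n-2}(1-t)-\frac{(2-t)^2}{\frac{n-1}{n}+\varepsilon}.
\end{equation*}
At $\varepsilon=0$ this is negative precisely for $t>\frac2n$; one checks that equality holds at $t=\frac2n$, where both terms equal $\frac{4(n-1)}{n}$. Hence for any fixed $t>\frac2n$ there is an $\varepsilon_n>0$ such that the bracket is negative for all $\varepsilon<\varepsilon_n$, and then $Q\geq u^{-4/(n-2)}R_g>0$ on $U_1\setminus\mathcal S$.

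\emph{Step 2: completeness and the construction of $w$.} Fix $s$ with $\dim_{\mathcal H}\mathcal S<s<n-3+\frac2n$, and set $b=\frac{n-2}{2(n-2-s)}$. Then $t=\frac{2(n-2-s)}{n-2}>\frac2n$, which is exactly the condition Step 1 needs. Completeness of $g'$ near $\mathcal S$ requires $\int u^{\frac{2}{n-2}}\,dr=\infty$ along every path into $\mathcal S$. This holds if $w\gtrsim d_{g_0}(\cdot,\mathcal S)^{-(n-2-s)}$ near $\mathcal S$, since then $u^{\frac{2}{n-2}}\gtrsim d^{-1}$. To build such a $w$, cover $\mathcal S$ at each dyadic scale $2^{-k}$ by $N_k$ balls $B(x_{k,i},2^{-k})$ with $x_{k,i}\in\mathcal S$, and set
\begin{equation*}
w=1+\sum_k c_k 2^{-k(n-2)}\sum_i G(x_{k,i},\cdot),\qquad c_k=2^{k(n-2-s)}.
\end{equation*}
The two-sided bound \eqref{f0a-9fjh-9qjhh} gives $w\gtrsim c_k\approx d^{-(n-2-s)}$ at distance about $2^{-k}$ from $\mathcal S$. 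Away from $\mathcal S$ the series converges locally uniformly, with its $C^2$ norms on compact subsets of $U_2\setminus\mathcal S$ controlled by elliptic estimates, provided $\sum_k N_k 2^{-ks}<\infty$. This step is the main obstacle. With a Minkowski bound we would have $N_k\lesssim 2^{ks''}$ for some $s''<s$ and the estimate is immediate. Under only a Hausdorff bound I would first try a Frostman/covering replacement: use efficient coverings with $\sum r_i^{s}$ summable across generations, so that the weighted Green's potential still blows up at every point of $\mathcal S$, at the rate needed along every approach path, while remaining finite off $\mathcal S$. If the rate cannot be arranged uniformly, I would fall back on the completeness-via-distance trick of Brendle--Wang.

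\emph{Step 3: gluing to the end.} On $U_2\setminus U_1$ we have $u=\eta w^b+1-\eta$, a smooth positive function on a compact set away from $\mathcal S$ where $R_g\geq c_0>0$. On $M^n\setminus U_2$ we have $u=1$, so $g'=g$ and $\rho=1$ there; in particular $g'$ agrees with $g$ on $\mathcal E$ and keeps its harmonic asymptotics. On the transition region the error terms in $Q$ are bounded by $C\,\|\nabla u\|_{C^1}$. I would make them small relative to $c_0$ by scaling the Green's potential part by a small constant $\lambda$, taking $w=1+\lambda\sum(\cdots)$; this does not affect the blow-up rate or completeness near $\mathcal S$. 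With this choice $Q>0$ on all of $M^n\setminus\mathcal S$, which gives property (2), and $g'$ is complete with the added ends at $\mathcal S$.
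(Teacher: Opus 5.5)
\textbf{There is a genuine gap: Step 2 only works under a Minkowski bound, while the proposition assumes a Hausdorff bound.}

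Your Steps 1 and 3 are sound and are essentially the paper's argument. Write $g'=w^{4b/(n-2)}g$ and $\rho=w^{ab}$ with $w$ harmonic near $\mathcal S$. Then your threshold $t=1/b>\frac2n$ is exactly the paper's condition $a<\frac{n}{n-2}$ for $g'=w^{2a}g$. The paper also uses a small multiple of the Green's potential to absorb the transition region. Since $|a|\le\frac{2n}{n-1}$ uniformly in $\varepsilon$, your $\lambda$, and hence $g'$, can be chosen independently of $\varepsilon$.

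The problem is Step 2, the only place the dimension hypothesis enters, and that hypothesis is a \emph{Hausdorff} bound. Your completeness criterion is the pointwise bound $w\gtrsim d_{g_0}(\cdot,\mathcal S)^{-(n-2-s)}$. The equal-radius dyadic covers you use to get it require $N_k\lesssim 2^{ks''}$, i.e.\ a Minkowski bound. A Hausdorff bound only gives, at each generation $\ell$, covers by balls $B(x_{\ell,j},r_{\ell,j})$ with $r_{\ell,j}\le 2^{-\ell}$ and $\sum_j r_{\ell,j}^{\sigma}\le 1$, and these radii may be far smaller than $2^{-\ell}$. The scales represented near a given $x\in\mathcal S$ can then have long gaps. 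At points whose distance to $\mathcal S$ falls in such a gap, no term of the potential gives the pointwise rate, so that rate cannot be arranged in general.

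Your first fallback, phrased as achieving ``the rate needed along every approach path'', is therefore not yet an argument. The second fallback does not work here. The Brendle--Wang function needs Hessian bounds for a distance function of a smooth metric. Here $g$ is only $L^\infty$ near $\mathcal S$, so $\Delta_g$ of such a function is uncontrolled, while your Step 1 uses $\Delta_g w=0$ essentially. Its construction also counts separated points, which is again a Minkowski argument.

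The missing idea is to give up the pointwise bound and prove divergence of the length integral scale by scale, via a Wolff-type potential.
\begin{itemize}
\item Keep your weights but attach them to the Hausdorff covers. With $\sigma\in(\dim_{\mathcal H}\mathcal S,s)$, set $\nu=\sum_{\ell,j}r_{\ell,j}^{\,s}\,\delta_{x_{\ell,j}}$, whose total mass is at most $\sum_\ell 2^{-\ell(s-\sigma)}<\infty$. Define $w=1+\lambda\int G(x,\cdot)\,d\nu(x)$.
\item Let $\gamma$ be a $g$-unit-speed curve with $\gamma(t)\to x\in\mathcal S$ as $t\to L$. Then $d_{g_0}(\gamma(t),x)\le C_0(L-t)$, and the lower Green's bound gives $w(\gamma(t))\gtrsim \nu(B_r(x))\,r^{2-n}$ with $r=2C_0(L-t)$.
\item For each $\ell$, pick a cover ball of radius $s_\ell=r_{\ell,j_\ell}$ containing $x$. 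Then $\nu(B_r(x))\ge s_\ell^{\,s}$ for $r\in[s_\ell,2s_\ell]$, so there $u^{2/(n-2)}=w^{1/(n-2-s)}\gtrsim s_\ell^{-1}$.
\item Each of infinitely many disjoint intervals $[s_{\ell_k},2s_{\ell_k}]$ therefore contributes a fixed positive amount to $\int u^{2/(n-2)}\,dt$, which diverges.
\end{itemize}
This is precisely the paper's Steps 1--2, where its exponent $p=n-2-a^{-1}$ equals your $s$. With it, your remaining steps go through unchanged.
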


\begin{proof}
\emph{Step 1: Construction of a Green's function having pole $\mathcal S$.} 
Let $\sigma\in(\dim_{\mathcal{H}}\mathcal{S}, n-3+\frac{2}{n})$, and choose a value $a$
satisfying
\begin{equation}
   \max\left\{1,\frac{2}{n-2},\frac{1}{\,n-2-\sigma\,}\right\} <a< \frac{n}{n-2}.
\end{equation}
Since \(\dim_{\mathcal H}\mathcal{S}< \sigma\) and $\mathcal{S}$ is compact, there exists
\(\ell_0\) such that, for every integer \(\ell\ge \ell_0\), one can find a
finite cover \(\{B^{g_0}_{r_{\ell,j}}(x_{\ell,j})\}_{j=1}^{N_\ell}\) of \(\mathcal S\) by $g_0$-geodesic balls,
with \(x_{\ell,j}\in \mathcal S\), satisfying
\begin{equation}\label{cover-choice}
        r_{\ell,j}\le 2^{-\ell},
        \qquad\quad
        \sum_{j=1}^{N_\ell} r_{\ell,j}^{\sigma}\le 1 .
\end{equation}
After reindexing \(\ell\), we may assume this holds for all integers \(\ell\ge 1\).
Define the atomic measure
\begin{equation}
\nu_{\mathcal S}\coloneqq
\sum_{\ell=1}^{\infty}\sum_{j=1}^{N_\ell}
r_{\ell,j}^{\,n-2-a^{-1}}\delta_{x_{\ell,j}} .
\end{equation}
Set \(p:=n-2-a^{-1}\), and observe that since \(p-\sigma>0\) it follows from \eqref{cover-choice} that for any measurable set \(X\subset U_2\) we have
\begin{align}
\begin{split}
\nu_S(X)
\leq
\sum_{\ell=1}^{\infty}\sum_{j=1}^{N_\ell}
r_{\ell,j}^{p} &=
\sum_{\ell=1}^{\infty}\sum_{j=1}^{N_\ell}
r_{\ell,j}^{p-\sigma}r_{\ell,j}^{\sigma}  \\
&\leq
\sum_{\ell=1}^{\infty}
2^{-\ell(p-\sigma)}
\sum_{j=1}^{N_\ell}r_{\ell,j}^{\sigma}  
\leq
\sum_{\ell=1}^{\infty}2^{-\ell(p-\sigma)}
<\infty .
\end{split}
\end{align}
Thus \(\nu_{\mathcal S}\) is a finite measure. Moreover, since each atom is centered at a point
\(x_{\ell,j}\in \mathcal S\), and \(\mathcal S\) is closed, \(\nu_{\mathcal S}\) is supported on \(\mathcal S\).
Next set
\begin{equation}
        G_{\mathcal{S}}(y):=\int_{\mathcal{S}} G(x,y)\,d\nu_{\mathcal S}(x)
        =
        \sum_{\ell=1}^{\infty}\sum_{j=1}^{N_\ell}
        r_{\ell,j}^{\,n-2-a^{-1}}G(x_{\ell,j},y).
\end{equation}
Since \(\nu_{\mathcal S}\) is finite and the Green's functions are locally uniformly
bounded for \(y\) in compact subsets of \(U_2\setminus \mathcal{S}\) by Lemma \ref{exist-Green}, this series
converges locally uniformly on \(U_2\setminus \mathcal S\). Hence \(G_{\mathcal S}\) is a
positive harmonic function on \(U_2\setminus \mathcal S\). Furthermore, by interior elliptic
regularity, \(G_{\mathcal S}\) is smooth on \(U_2\setminus \mathcal S\).

We note that \(G_{\mathcal S}\) has pole set \(\mathcal S\). Indeed, fix
\(x\in \mathcal S\). For each \(\ell\geq 1\), choose \(j_\ell\) such that
$x\in B^{g_0}_{r_{\ell,j_\ell}}(x_{\ell,j_\ell})$,
and set \(s_\ell:=r_{\ell,j_\ell}\). Then \(s_\ell\leq 2^{-\ell}\), and hence
\(s_\ell\to0\). If \(y\in U_2\setminus \mathcal S\) satisfies
\(d_{g_0}(y,x)<s_\ell\), then
\begin{equation}
        d_{g_0}(y,x_{\ell,j_\ell})
        \leq d_{g_0}(y,x)+d_{g_0}(x,x_{\ell,j_\ell})
        <2s_\ell .
\end{equation}
Using the lower Green's function estimate \eqref{f0a-9fjh-9qjhh} and the definition of \(\nu_{\mathcal S}\), we obtain
\begin{align}
\begin{split}
        G_{\mathcal S}(y)
        \geq
        s_\ell^{\,n-2-a^{-1}}G(x_{\ell,j_\ell},y)  
        &\geq
        C^{-1}s_\ell^{\,n-2-a^{-1}}
        d_{g_0}(x_{\ell,j_\ell},y)^{2-n}  \\
        &\geq
        C^{-1}2^{2-n}
        s_\ell^{\,n-2-a^{-1}}s_\ell^{\,2-n}  
        =
        c\,s_\ell^{-a^{-1}} .
\end{split}
\end{align}
It follows that
\begin{equation}
        G_{\mathcal S}(y)\to+\infty
        \qquad\text{as } y\to x,\quad y\in U_2\setminus\mathcal S .
\end{equation}
For the completeness of the conformally blown-up metric, however, this
pointwise blow-up is not sufficient by itself. What is needed is the stronger
integral statement that \(G_{\mathcal S}^a\) is non-integrable along every curve
approaching \(\mathcal S\). The Wolff-type potential introduced below encodes
the scale-by-scale lower bounds for the Green's potential and will be used to
prove precisely this nonintegrability.

Consider the Wolff-type potential on $U_2$,
\begin{equation}
        W(x):=\int_0^1\left(\nu_{\mathcal{S}}(B^{g_0}_r(x))r^{2-n}\right)^a\,dr .
\end{equation}
Fix \(x\in \mathcal S\), and as above for each \(\ell\ge1\) choose \(j_\ell\) such that
$x\in B^{g_0}_{r_{\ell,j_\ell}}(x_{\ell,j_\ell})$.
Note that for every \(r\in [s_\ell,2s_\ell]\), one has
\(x_{\ell,j_\ell}\in B^{g_0}_r(x)\). Therefore
\begin{equation}
        \nu_{\mathcal S}(B^{g_0}_r(x))\ge s_\ell^{\,n-2-a^{-1}} .
\end{equation}
It follows that, for \(r\in [s_\ell,2s_\ell]\),
\begin{equation}
\left(\nu_{\mathcal S}(B^{g_0}_r(x))r^{2-n}\right)^a
\ge
c\,\left(s_\ell^{\,n-2-a^{-1}}s_\ell^{\,2-n}\right)^a
=
c\,s_\ell^{-1}.
\end{equation}
Since \(s_\ell\to0\), we may choose a subsequence \(\ell_k\) such that $2s_{\ell_{k+1}}<s_{\ell_k}$,
then the intervals \([s_{\ell_k},2s_{\ell_k}]\) are pairwise disjoint. Consequently,
\begin{equation}\label{blow-up-wolff}
W(x)
\ge
\sum_{k=1}^\infty
\int_{s_{\ell_k}}^{2s_{\ell_k}}
\left(\nu_{\mathcal S}(B^{g_0}_r(x))r^{2-n}\right)^a\,dr  
\ge
c\sum_{k=1}^\infty
\int_{s_{\ell_k}}^{2s_{\ell_k}}s_{\ell_k}^{-1}\,dr
= +\infty .
\end{equation}
Thus $W$ 
takes the value $+\infty$ at all points of \(\mathcal S\).

\medskip
\noindent\emph{Step 2: Construction of a complete metric on $M^n\setminus \mathcal S$.} 
Let \(\varphi\) be a smooth function on $M^n$ satisfying
\begin{equation}
        0\leq \varphi\leq 1,\qquad
        \varphi=1 \text{ on } U_1,\qquad
        \overline{\operatorname{supp}\varphi}\subset U_2 .
\end{equation}
For a constant \(\tau>0\), to be chosen later, define
\begin{equation}
        w:=1+\tau\varphi G_{\mathcal S},
        \qquad
        g':=w^{2a}g ,
\end{equation}
where \(\varphi G_{\mathcal S}\) is extended by zero outside \(U_2\).
Since \(\varphi=0\) on the asymptotically flat end \(\mathcal E\), we have
\(g'=g\) on \(\mathcal E\). Thus, the harmonically asymptotically flat
asymptotics are preserved.

We now prove completeness near \(\mathcal S\). Let
\(\gamma:[0,L)\to U_1\setminus \mathcal S\) be a \(g\)-unit-speed curve such
that $\lim_{t\to L}\gamma(t)=x\in \mathcal S$.
Since \(g\) and \(g_0\) are uniformly equivalent on \(\overline{U}_2\), there
is a constant \(C_0>0\) such that, for \(t\in[L-\eta,L)\) with $\eta>0$ sufficiently small, we have
$d_{g_0}(\gamma(t),x)\leq C_0(L-t)$.
Set \(r_t:=2C_0(L-t)\). If \(z\in B_{r_t}^{g_0}(x)\), then
\begin{equation}
        d_{g_0}(z,\gamma(t))
        \leq d_{g_0}(z,x)+d_{g_0}(x,\gamma(t))
        \leq 2r_t .
\end{equation}
Using the lower Green's function estimate \eqref{f0a-9fjh-9qjhh}, and the fact that $\nu_{\mathcal S}$ is supported on $\mathcal{S}$, we obtain
\begin{align}
\begin{split}
        G_{\mathcal S}(\gamma(t))
        &=\int_{\mathcal S}G(z,\gamma(t))\,d\nu_{\mathcal S}(z) \\
        &\geq
        \int_{B_{r_t}^{g_0}(x)}\!\!
        G(z,\gamma(t))\,d\nu_{\mathcal S}(z) 
        \geq
        c\,\nu_{\mathcal S}(B_{r_t}^{g_0}(x))\, r_t^{2-n}.
\end{split}
\end{align}
Therefore, as in \cite[Lemma 2.20]{BiHaoHeShiZhu26},
\begin{align}
\begin{split}
        \int_0^L G_{\mathcal S}(\gamma(t))^a\,dt
        &\geq
        c\int^{L}_{L-\eta}
        \left(\nu_{\mathcal S}(B_{r_t}^{g_0}(x))r_t^{2-n}\right)^a\,dt \\
        &=
        c(2C_0)^{-1}\int_0^{2C_0\eta}
        \left(\nu_{\mathcal S}(B_r^{g_0}(x))r^{2-n}\right)^a\,dr
        =
        +\infty,
\end{split}
\end{align}
where the last equality follows from \(W(x)=+\infty\).

Since \(\varphi=1\) on \(U_1\), we have \(w\geq \tau G_{\mathcal S}\) along
\(\gamma\) for \(t\) sufficiently close to \(L\). Hence
\[
        L_{g'}(\gamma)
        =
        \int_0^L w(\gamma(t))^a\,dt
        \geq
        \tau^a\int_{L-\eta}^L G_{\mathcal S}(\gamma(t))^a\,dt
        =
        +\infty .
\]
Thus every curve approaching \(\mathcal S\) has infinite \(g'\)-length.
Since \(g'=g\) on the asymptotically flat end and \(g'\) is smooth on
\(M^n\setminus\mathcal S\), it follows that \(g'\) is complete on
\(M^n\setminus\mathcal S\).

\medskip
\noindent\emph{Step 3: The positivity of $Q$.} 
Fix \(\varepsilon>0\), to be chosen sufficiently small, and set
\begin{equation}
        \mu:=\frac{n-1}{n}+\varepsilon,
        \qquad
        \rho:=w^b,
\end{equation}
where \(b\in\mathbb R\) will be chosen below. 
Since \(g'=w^{2a}g\), the conformal transformation formulas give
\begin{align}
\begin{split}
        w^{2a}R_{g'}
        &=
        R_g-2a(n-1)\Delta_g\log w
        -(n-1)(n-2)a^2|\nabla\log w|_g^2, \\
        w^{2a}\Delta_{g'}\log w
        &=
        \Delta_g\log w
        +a(n-2)|\nabla\log w|_g^2, \\
        w^{2a}|\nabla\log w|_{g'}^2
        &=
        |\nabla\log w|_g^2 .
\end{split}
\end{align}
Using \(\log\rho=b\log w\), we obtain
\begin{align}
\begin{split}
    w^{2a}Q
    &=
    R_g-2\bigl(b+a(n-1)\bigr)\Delta_g\log w          \\
    &\quad
    -
    \left(
        2ab(n-2)
        +(n-1)(n-2)a^2
        +\mu b^2
    \right)
    |\nabla\log w|_g^2 .
\end{split}
\end{align}
Since
\begin{equation}
        \Delta_g\log w
        =
        \frac{\Delta_g w}{w}
        -
        |\nabla\log w|_g^2,
\end{equation}
this becomes
\begin{equation}\label{Q-computation-2'}
    w^{2a}Q
    =
    R_g
    -2\bigl(b+a(n-1)\bigr)\frac{\Delta_g w}{w}       
    +
    c_*|\nabla\log w|_g^2 ,
\end{equation}
where
\begin{equation}\label{b-equation'}
  c_*=  -\bigl(2a(n-2)-2\bigr)b
    -\bigl((n-1)(n-2)a^2-2a(n-1)\bigr)
    -\mu b^2.
\end{equation}
We choose \(b\) so that the coefficient of the gradient term in
\eqref{Q-computation-2'} vanishes, namely $c_*=0$.
Because $a>\frac{2}{n-2}$
this quadratic equation has a real root provided
\begin{equation}\label{discriminant-condition'}
    \mu
    <
    \frac{(2a(n-2)-2)^2}
    {4\bigl((n-1)(n-2)a^2-2a(n-1)\bigr)} .
\end{equation}
Moreover, the choice \(a<\frac{n}{n-2}\) implies
\begin{equation}
    \frac{(2a(n-2)-2)^2}
    {4\bigl((n-1)(n-2)a^2-2a(n-1)\bigr)}
    >
    \frac{n-1}{n}.
\end{equation}
Hence there exists \(\varepsilon_n>0\) such that
\eqref{discriminant-condition'} holds whenever
\begin{equation}
        0<\varepsilon<\varepsilon_n,
        \qquad
        \mu=\frac{n-1}{n}+\varepsilon .
\end{equation}
For each such \(\varepsilon\), choose a real root
\(b=b(\varepsilon)\) of \eqref{b-equation'}. The branch may be chosen so that
\(b(\varepsilon)\) remains uniformly bounded for
\(\varepsilon\in(0,\varepsilon_n)\).

With this choice of \(b\), the gradient term in \eqref{Q-computation-2'}
vanishes, and it follows that
\begin{equation}
    w^{2a}Q
    =
    R_g
    -2\bigl(b+a(n-1)\bigr)
    \frac{\tau\Delta_g(\varphi G_{\mathcal S})}{w}.
\end{equation}
Because \(w\geq1\), and because \(b=b(\varepsilon)\) is uniformly bounded
for \(\varepsilon\in(0,\varepsilon_n)\), there exists a constant
\(C_1>0\), independent of \(\varepsilon\), such that
\begin{equation}\label{Q-lower'}
    w^{2a}Q
    \geq
    R_g
    -
    C_1\tau|\Delta_g(\varphi G_{\mathcal S})| .
\end{equation}
Observe that \(\Delta_g(\varphi G_{\mathcal S})=0\) on
\(U_1\setminus\mathcal S\), since \(\varphi=1\) there and
\(G_{\mathcal S}\) is \(g\)-harmonic. It also vanishes on
\(M^n\setminus U_2\), since \(\varphi G_{\mathcal S}\) is supported in
\(U_2\). On the compact set
\(\overline{U_2\setminus U_1}\), define
\begin{equation}
    C_2:=
    \sup_{\overline{U_2\setminus U_1}}
    |\Delta_g(\varphi G_{\mathcal S})|<\infty,
    \qquad
    C_3:=
    \inf_{\overline{U_2\setminus U_1}}R_g>0 .
\end{equation}
Choose \(\tau>0\) sufficiently small so that
$C_1C_2\tau<C_3$.
Then \eqref{Q-lower'} gives
\begin{equation}
        w^{2a}Q>0
        \qquad\text{on } U_2\setminus U_1 .
\end{equation}
On \(U_1\setminus\mathcal S\) and on \(M^n\setminus U_2\), we have
\(\Delta_g(\varphi G_{\mathcal S})=0\), and hence on those sets $w^{2a}Q=R_g>0$.
Therefore, $Q>0$ on $M^n\setminus\mathcal S$.
\end{proof}

\subsection{\texorpdfstring{$\mu$}{mu}-bubble existence and stability}
Although Proposition \ref{blow-up1} shows that \(M^n\setminus\mathcal S\)
carries a complete smooth asymptotically flat metric satisfying the
scalar-curvature inequality \eqref{Q-term1}, this is not by itself sufficient
to carry out the dimension-reduction argument in all dimensions. The
obstruction is that the coefficient of the gradient term in \eqref{Q-term1}
is not preserved under dimension reduction. In this subsection we construct
suitable \(\mu\)-bubbles and derive a stability inequality in which the
coefficient of the gradient term is improved enough to allow the
dimension-reduction argument to be iterated.

\subsubsection{Potential function}

The metric produced in Proposition \ref{blow-up1} is complete and
harmonically asymptotically flat, but it may have additional ends. In
order to construct the required \(\mu\)-bubbles, we first construct a barrier
potential which vanishes on the prescribed asymptotically flat end and
tends to \(-\infty\) along the auxiliary ends.

\begin{lemma}[Existence of a barrier potential]
\label{potential-existence}
Let \((M^n,g)\) be a complete Riemannian manifold with a harmonically asymptotically
flat end $\mathcal{E}$, possibly with additional arbitrary ends, and let
\(Q\in C^\infty(M^n)\) be positive. There exists a constant \(\delta_0>0\)
such that, for every \(\delta\in(0,\delta_0)\), there is a connected open
set \(\Omega_\delta\subset M^n\) 
that contains the prescribed end $\mathcal{E}$ with $\Omega_{\delta}\setminus\E$ precompact,
and a smooth function \(\Phi\in C^\infty(\Omega_\delta)\) satisfying
\begin{equation}\label{potential-ineq}
    \Phi=0 \quad \text{on } \mathcal E,
    \qquad
    \Phi(x)\to -\infty
    \quad \text{as } x\to \partial\Omega_\delta ,
\end{equation}
and
\begin{equation}\label{potential-main-ineq}
    Q+2\delta^2\Phi^2-4|\nabla\Phi|_g
    \geq 0
    \quad \text{on } \Omega_\delta .
\end{equation}
\end{lemma}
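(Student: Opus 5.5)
The plan is to build $\Phi$ as a function of a smoothed distance to the prescribed end, by solving an ODE that makes the inequality \eqref{potential-main-ineq} hold with equality up to the positivity of $Q$. This is the standard Brendle--Wang / Lesourd--Unger--Yau shielding construction.

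\emph{Choice of exhaustion function and region.} Fix a coordinate sphere $S_{r_0}\subset\mathcal E$ and let $\mathcal E_0\supset\mathcal E$ denote the closed exterior region it bounds (enlarging $\mathcal E$ slightly if needed). Choose a smooth function $\psi$ on $M^n$ with:
\begin{itemize}
\item $\psi=0$ on $\mathcal E_0$ and $\psi>0$ off $\mathcal E_0$;
\item $|\nabla\psi|_g\leq 1$, obtained by smoothing the distance to $\mathcal E_0$;
\item $\psi$ proper on $M^n\setminus\mathcal E$, which is possible since $g$ is complete.
\end{itemize}
For a parameter $L>0$, to be fixed in terms of $\delta$, set $\Omega_\delta=\{\psi<L\}$. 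It contains $\mathcal E$, and $\Omega_\delta\setminus\mathcal E$ is precompact because $\psi$ is proper. Connectedness is arranged by taking the component containing $\mathcal E$. We look for $\Phi=f(\psi)$ with $f$ decreasing, $f\equiv 0$ near $0$, and $f(t)\to-\infty$ as $t\to L$.

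\emph{The ODE.} Since $|\nabla\Phi|=|f'(\psi)||\nabla\psi|\leq |f'(\psi)|$, it suffices to have
\begin{equation*}
Q+2\delta^2 f^2-4|f'|\geq 0 .
\end{equation*}
Let $q_0>0$ be a lower bound for $Q$ on the compact set $\{\psi\leq L_0\}$. On this set $Q$ is a smooth positive function, so it is bounded below there; on the end $\{\psi=0\}$ we only need $Q\geq0$, since $\Phi$ is locally constant there. The natural choice is the solution of
\begin{equation*}
4f'=-\bigl(q_0+2\delta^2 f^2\bigr).
\end{equation*}
This gives
\begin{equation*}
f(t)=-\frac{\sqrt{q_0}}{\sqrt2\,\delta}\tan\!\Bigl(\frac{\sqrt{2q_0}\,\delta}{4}\,(t-t_1)\Bigr),
\end{equation*}
which reaches $-\infty$ at a finite time $t_1+L_\delta$ with $L_\delta=\frac{2\pi}{\sqrt{2q_0}\,\delta}$.

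To make $f$ vanish near $t=0$, I will start the ODE at $t_1>0$ and use $f=0$ on $[0,t_1]$. The junction is $C^1$-matched by replacing $q_0$ with $q_0\eta(t)$, where $\eta$ is a smooth cutoff from $0$ to $1$ on $[0,t_1]$; the inequality still holds because $Q\geq q_0\geq q_0\eta$.

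\emph{Main obstacle: consistency of $q_0$, $L$ and $\delta$.} The bound $q_0$ is taken on $\{\psi\leq L\}$, but $L\approx L_\delta\sim \delta^{-1}q_0^{-1/2}$ itself depends on $q_0$. I resolve this by fixing $q_0$ first and then shrinking $\delta$ the right way:
\begin{enumerate}
\item Fix a large $L_0$ and set $q_0=\min_{\{\psi\leq L_0\}}Q>0$.
\item Let $\delta_0$ be such that $t_1+\frac{2\pi}{\sqrt{2q_0}\,\delta}\leq L_0$ for $\delta\geq\delta_0$.
\end{enumerate}
Shrinking $\delta$ enlarges $L_\delta$, which is the wrong direction. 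So I would instead use a solution that blows up faster: replace $q_0$ by a larger constant $A$ and allow negative slack. Concretely, I would solve $4f'=-(q_0+2\delta^2f^2)$ and check that the time to blow-up can be made at most $L_0$ for all small $\delta$, by taking the initial value $f(t_1)$ to be very negative rather than $0$.

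Alternatively, and more simply, since $Q$ is only needed on the compact set $\{\psi\leq L\}$, one can just set $L:=L_\delta+t_1$ and $q_0:=q_0(\delta)$. The pair is then chosen by a fixed-point/monotonicity argument on $L\mapsto \min_{\{\psi\leq L\}}Q$, which is the step requiring the most care. The statement's existence of $\delta_0$ suggests the intended route: fix the region, then take $\delta$ small enough that the ODE solution exists on it, using a shifted tangent that reaches $-\infty$ exactly at $\psi=L$. Such a solution satisfies $|f'|\leq \tfrac14(q_0+2\delta^2f^2)$, and this is what \eqref{potential-main-ineq} requires.
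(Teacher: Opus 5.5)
There is a genuine gap, and it sits exactly at the step you flag as the main obstacle: none of your proposed resolutions works. In your ODE $4f'=-(q_0+2\delta^2f^2)$ the constant $q_0$ drives the blow-up. So you need $Q\geq q_0$ on a region of $\psi$-width about $\frac{2\pi}{\sqrt{2q_0}\,\delta}$, which tends to $\infty$ as $\delta\to0$. But $Q$ is only assumed positive, so it may tend to $0$ along the auxiliary ends. In the application $Q=w^{-2a}R_g$ near $\mathcal S$, with no lower bound along the blown-up ends. If $Q$ decays like $\psi^{-2}$ or faster, the fixed-point condition $L\geq t_1+\frac{2\pi}{\sqrt{2q_0(L)}\,\delta}$, with $q_0(L)=\min_{\{\psi\leq L\}}Q$, has no solution for small $\delta$.

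Fixing the region and then shrinking $\delta$ is impossible outright. Along a unit-speed curve $|(\Phi\circ\gamma)'|\leq\frac14\bigl(C+2\delta^2\Phi^2\bigr)$, where $C$ is the supremum of $Q$ on the region. Going from $0$ to $-\infty$ therefore costs length at least $\frac{2\pi}{\sqrt{2C}\,\delta}$, so $\Omega_\delta$ must grow as $\delta\to0$. Starting from a very negative value does not help either: over a fixed collar you can only reach a bounded value $-K$, and from there blow-up still takes $\psi$-time of order $(K\delta^2)^{-1}$.

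The missing idea is that the quadratic term alone forces finite-time blow-up. The function $f(t)=-2/(c-\delta^2t)$ solves $4f'=-2\delta^2f^2$, so $4|\nabla\Phi|_g\leq2\delta^2\Phi^2$ and \eqref{potential-main-ineq} holds using nothing but $Q>0$. A lower bound for $Q$ is needed only on a fixed compact collar, to move $\Phi$ off $0$.

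This is the paper's construction, with $\phi$ playing the role of your $\psi$. On $V=\{0<\phi<d_0\}$ it sets $\Phi=-2\tan(\delta^2\phi)$, which requires $Q>8\delta^2$ there; this is what determines $\delta_0$. Beyond $V$ it sets $\Phi=-2/\bigl(\cot(\delta^2d_0)-\delta^2(\phi-d_0)\bigr)$, which reaches $-\infty$ at $\phi=L_\delta=d_0+\delta^{-2}\cot(\delta^2d_0)$. Then $\Omega_\delta$ is the component of $U\cup\{\phi<L_\delta\}$ containing $\mathcal E$. It grows as $\delta\to0$ but stays precompact off $\mathcal E$ by properness of $\phi$.

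In your framework the repair is a one-line change: take $\eta$ to be a bump supported in $(0,t_1)$ instead of a cutoff that stays equal to $1$. Then:
\begin{itemize}
\item $Q\geq q_0\eta$ only requires $q_0=\min_{\{\psi\leq t_1\}}Q$ on a fixed compact set;
\item $f(t_1)\leq-\frac{q_0}{4}\int\eta<0$;
\item no blow-up occurs before $t_1$ once $\delta<\delta_0:=\frac{2\pi}{\sqrt{2q_0}\,t_1}$;
\item after $t_1$, $f$ solves the pure Riccati equation and reaches $-\infty$ at $\psi=t_1+2/(\delta^2|f(t_1)|)$, which you use to define $\Omega_\delta$.
\end{itemize}
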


\begin{proof}
Choose \(U\subset M^n\), a connected open neighborhood of the prescribed end $\E$, with smooth compact boundary. Since \(M^n\) is complete, the
distance function \(d_g(\cdot,\partial U)\) is a proper \(1\)-Lipschitz
function on \(M^n\setminus U\). After smoothing this function and modifying
it in a collar of \(\partial U\), we may choose a smooth proper function
$\phi:M^n\setminus U\to [0,\infty)$
such that $\phi=0$ on $\partial U$ and $|\nabla\phi|_g\leq 1$.
Choose a regular value \(d_0>0\) of \(\phi\), and set
\begin{equation}
        U':=U\cup\{x\in M^n\setminus U:\phi(x)<d_0\}.
\end{equation}
Then \(U'\) has smooth boundary, \(\overline U\subset U'\), and
\(V:=U'\setminus\overline U\) has compact closure. Moreover, with
\begin{equation}
        \partial_0V:=\partial U,\qquad
        \partial_1V:=\partial U'=\{\phi=d_0\},
\end{equation}
we have
\begin{equation}
        \phi=0 \quad \text{on } \partial_0V,
        \qquad
        \phi=d_0 \quad \text{on } \partial_1V,
        \qquad
        |\nabla\phi|_g\leq 1 .
\end{equation}
Since \(Q>0\) and \(\overline{V}\) is compact, there exists \(\delta_0>0\) appropriately
small such that
\begin{equation}\label{delta-small}
        Q>8\delta_0^2
        \quad \text{on } V,
        \qquad
        d_0\delta_0^2<\frac{\pi}{2}.
\end{equation}
For \(\delta\in(0,\delta_0)\), set
\begin{equation}\label{L-delta}
        L_\delta
        :=
        d_0+\delta^{-2}\cot(\delta^2d_0),
\end{equation}
and consider the open set
\begin{equation}\label{Omega-delta}
        \tilde{\Omega}_\delta
        :=
        U\cup\{x\in M^n\setminus U:\phi(x)<L_\delta\}.
\end{equation}
We define \(\Omega_\delta\) to be the connected component of $\tilde{\Omega}_{\delta}$ containing the prescribed end \(\mathcal E\), and note that
\(\partial\Omega_\delta\) is a subset of the level set \(\{\phi=L_\delta\}\).
Next, define a barrier function \(\Phi\) on \(\Omega_\delta\) by
\begin{equation}\label{Phi-piecewise}
    \Phi:=
    \begin{cases}
        0,
        & \text{on } U,\\[1mm]
        -2\tan(\delta^2\phi),
        & \text{on } V,\\[2mm]
        -\dfrac{2}{
        \cot(\delta^2d_0)-\delta^2(\phi-d_0)},
        & \text{on } \Omega_\delta\setminus U' .
    \end{cases}
\end{equation}
The two nonzero pieces agree in value along \(\partial_1V\), and both agree
with \(0\) along \(\partial_0V\). After smoothing in arbitrarily small
collars of \(\partial_0V\) and \(\partial_1V\), and after decreasing
\(\delta_0\) if necessary, we obtain a smooth function, still denoted
\(\Phi\), satisfying the properties below. By \eqref{L-delta}, the denominator
in the last line of \eqref{Phi-piecewise} tends to \(0^+\) as
\(\phi\to L_\delta\). Hence $\Phi(x)\to -\infty$ as $x\to\partial\Omega_\delta$.

It remains to verify \eqref{potential-main-ineq}. On \(U\), we have
\(\Phi=0\), and hence the desired inequality holds on this set due to the positivity of $Q$.
On \(V\), using \(|\nabla\phi|_g\leq 1\), we obtain
\begin{equation}
        |\nabla\Phi|_g
        \leq
        2\delta^2\sec^2(\delta^2\phi)  
        =
        2\delta^2\left(1+\frac{\Phi^2}{4}\right).
\end{equation}
Therefore, on \(V\),
\begin{equation}
        Q+2\delta^2\Phi^2-4|\nabla\Phi|_g
        \geq
        Q+2\delta^2\Phi^2
        -8\delta^2\left(1+\frac{\Phi^2}{4}\right) 
    =Q-8\delta^2>0,
\end{equation}
where the last inequality follows from \eqref{delta-small}.
On \(\Omega_\delta\setminus U'\), a direct computation gives
$|\nabla\Phi|_g\leq\frac{\delta^2}{2}\Phi^2$, and
consequently on this region
\begin{equation}
Q+2\delta^2\Phi^2-4|\nabla\Phi|_g\geq
        Q>0.
\end{equation}
The smoothing near \(\partial_0V\) and \(\partial_1V\) is performed inside a
fixed compact subset of \(\overline{V}\), where \(Q\) has a positive lower bound; by
choosing the smoothing collars sufficiently small and then decreasing
\(\delta_0\) if necessary, the same desired inequality remains valid there. 
\end{proof}

\subsubsection{Existence of \(\mu\)-bubbles} 
Let \((M^n,g)\) be a complete Riemannian manifold with a harmonically asymptotically
flat end $\mathcal{E}$ having negative mass $m(\E,g)<0$, and possibly with additional arbitrary ends.
We seek a complete asymptotically flat hypersurface (the $\mu$-bubble), that plays a role analogous to the stable minimal surfaces in classical Schoen-Yau dimensional reduction, but now in the context of arbitrary ends. The existence of such surfaces, which were originally introduced by Gromov \cite[Section 9]{Gromov1}, is well-known \cite{chodosh2024generalized,LesourdUngerYau2024,Zhu2021Width}, and thus here we only outline the argument with some detail pertinent to the current setting.

Let \(\rho\in C^{\infty}(M^n)\) be a positive weight function with $\rho=1$ on $\E$, and let \(\Phi\) and $\Omega_{\delta}$ be the barrier potential and domain constructed in Lemma \ref{potential-existence} with respect
to a positive $Q\in C^\infty(M^n)$, for some $\delta\in(0,\delta_0)$. Choose asymptotic Cartesian coordinates $x=(x',x^n)$
on the prescribed end \(\mathcal E\), where \(x'\in\mathbb R^{n-1}\). For \(r,\sigma>0\) large, define
\begin{align}
\begin{split}
\mathcal Z_{r,\sigma}
&:=
\bigl(\Omega_\delta\setminus\mathcal E\bigr)
\cup
\{x\in\mathcal E: |x'|\le r,\ -\sigma\le x^n\le\sigma\}, \\
\mathcal P_r^{\pm\sigma}
&:=
\{x\in\mathcal E: |x'|\le r,\ x^n=\pm\sigma\}, \\
\mathcal D_{r,\sigma}
&:=
\{x\in\mathcal E: |x'|=r,\ -\sigma\le x^n\le\sigma\}.
\end{split}
\end{align}
Since \(\rho=1\) and $\Phi=0$ on \(\mathcal E\), and
\(m(M^n,g,\mathcal E)<0\), the standard asymptotic barrier estimates imply
that, for all $r$ and \(\sigma\) sufficiently large,
\begin{equation}\label{boundary-barrier}
H+\nu(\log\rho)-\Phi>0 \quad\quad \text{ at } \mathcal P_r^{\pm\sigma}\cup\mathcal D_{r,\sigma},
\end{equation}
where $\nu$ is the unit outer normal and $H$ is the mean curvature of these boundary components of $\mathcal Z_{r,\sigma}$.

\medskip

\noindent\emph{Step 1: The truncated \(\mu\)-bubble problem with free boundary.}
Decompose the boundary of \(\mathcal Z_{r,\sigma}\) as
\begin{equation}
        \partial_+
        :=
        \mathcal P_r^{+\sigma},
        \qquad
        \partial_-
        :=
        \mathcal P_r^{-\sigma}\cup\partial\Omega_\delta,
        \qquad
        \partial_0
        :=
        \mathcal D_{r,\sigma}.
\end{equation}
Let \(\Omega_0\subset \mathcal Z_{R,\sigma}\) be a fixed Caccioppoli set such that
\(\partial_+\subset \Omega_0\) and \(\partial_-\cap \Omega_0=\emptyset\). We minimize
the $\mu$-bubble functional with free boundary
\begin{equation}
        \mathcal F_{r,\sigma}(\Omega)
        :=
        \int_{\partial^*\Omega}\rho\,d\mathcal H^{n-1}
        -
        \int_{\mathcal Z_{r,\sigma}}
        (\chi_\Omega-\chi_{\Omega_0})\rho\Phi\,d\mathcal{H}^n
\end{equation}
among Caccioppoli sets \(\Omega\subset\mathcal Z_{r,\sigma}\) satisfying
\begin{equation}
        \Omega\Delta \Omega_0
        \Subset
        \mathring{\mathcal Z}_{r,\sigma}\cup\partial_0 .
\end{equation}
Here $\chi_{\Omega}$ is the characteristic function of $\Omega$, $d\mathcal{H}^n$
is the $n$-dimensional Hausdorff measure, and \(\partial^*\Omega\) denotes the reduced boundary relative 
to the region in which variations are allowed; in particular, the fixed boundary faces
\(\partial_+\) and \(\partial_-\) are not included in the first variation.

The inequalities \eqref{boundary-barrier},
together with the fact that \(\Phi\to-\infty\) as
\(x\to\partial\Omega_\delta\), provide the required barriers at
\(\partial_+\) and \(\partial_-\). 
Therefore the standard compactness and barrier
argument for \(\mu\)-bubbles \cite[Sections 3 \& 4]{chodosh2024generalized} gives a minimizer \(\Omega_{r,\sigma}\) of
\(\mathcal F_{r,\sigma}\). The desired free boundary $\mu$-bubble is defined to be the closure of the reduced boundary
\begin{equation}
        \Sigma_{r,\sigma}:=\overline{\partial^*\Omega}_{r,\sigma}.
\end{equation}
The side-barrier inequality along \(\partial_0\) implies that
\(\Sigma_{r,\sigma}\) meets \(\partial_0\) orthogonally.

\medskip

\noindent\emph{Step 2: Passing to the limit and asymptotics of the end.}
We now let \(r\to\infty\), keeping \(\sigma\) fixed. By the compactness
argument for \(\mu\)-bubbles \cite[Theorem 4.5]{LesourdUngerYau2024} and almost-minimizing boundaries \cite[Lemma A.3]{Eichmair2009PlateauMOTS}, after passing to a subsequence
\(r_j\to\infty\), the hypersurfaces \(\Sigma_{r_j,\sigma}\) converge in the sense of currents
to a limiting $(n-1)$-integral current, whose support we denote by \(\Sigma^{n-1}\); we will refer to this surface as a \textit{$\mu$-bubble associated with $(\rho,\Phi)$}. The regularity theory
for almost-minimizing hypersurfaces with prescribed mean curvature \cite[Theorem A.1]{Eichmair2009PlateauMOTS} implies
that \(\Sigma^{n-1}\) is smooth outside a closed singular set \(\mathcal S_{\Sigma}\) of Hausdorff codimension seven or greater. Away from this singular set, the first variation formula implies that
\begin{equation}\label{mean-curvature}
H_\Sigma=\Phi-\nu(\log\rho),
\end{equation}
where $H_{\Sigma}$ denotes mean curvature with respect to the unit outer normal $\nu$; here the `outer' direction is determined by the approximating Caccioppoli sets. Furthermore, the corresponding Minkowski dimension bound for the singular set of \(\mu\)-bubbles follows from the
Cheeger--Naber \cite[Theorem 5.8]{CheegerNaber2013QuantitativeStratification}/Naber--Valtorta \cite[Theorem 1.6]{NaberValtorta2019Varifolds} stratification argument, as
adapted to \(\mu\)-bubbles by Brendle--Wang \cite[Theorem~3.34]{BrendleWang2026}, thus
\begin{equation}
        \dim_{\mathcal M}(\mathcal S_{\Sigma})\le n-8 .
\end{equation}
Moreover, \(\mathcal S_{\Sigma}\) is contained in a compact subset of \(M^n\). In fact, stronger statements about the asymptotics of the limiting $\mu$-bubble $\Sigma^{n-1}$ are valid. Indeed, in \cite[Theorem 4.5]{LesourdUngerYau2024} asymptotic estimates combined with Allard's regularity theorem show that there exists $r_0>0$ large enough, so that
$\Sigma^{n-1}\cap \{|x'|>r_0\}$ may be expressed as a graph $x^n=z(x')$ for a smooth function $z$ satisfying
\begin{align}
\begin{split}
        z-a_0 &\in C^{3,\alpha}_{-\epsilon}(\mathbb{R}^{n-1}\setminus B_{r_0})
        \qquad \text{if } n=3,\\
        z-\bigl(a_0+b_0|x'|^{3-n}\bigr)
        &\in C^{3,\alpha}_{3-n-\epsilon}(\mathbb{R}^{n-1}\setminus B_{r_0})
        \qquad \text{if } n\ge4,
\end{split}
\end{align}
for some $a_0 ,b_0 \in \mathbb{R}$, $\epsilon>0$, and $\alpha\in (0,1)$.
Since the ambient end $(\E,g)$ is harmonically asymptotically flat, it follows
that the corresponding asymptotic end of the $\mu$-bubble $(\E_{\Sigma},g_{\Sigma})$
is also harmonically asymptotically flat, and has zero mass $m(\E_{\Sigma},g_{\Sigma})=0$
if $n\geq 4$. We now record these results.

\begin{proposition}[\(\mu\)-bubble existence]
\label{existence-bubble}
Consider a complete Riemannian manifold $(M^n,g)$ with a harmonically asymptotically
flat end $\mathcal{E}$ having negative mass $m(\E,g)<0$, and possibly with additional arbitrary ends.
Let \(\rho\in C^{\infty}(M^n)\) be a positive weight function with $\rho=1$ on $\E$, and let \(\Phi\) and $\Omega_{\delta}$ be the barrier potential and domain constructed in Lemma \ref{potential-existence} with respect
to a positive $Q\in C^\infty(M^n)$, for some $\delta\in(0,\delta_0)$. Then there exists a $\mu$-bubble $\Sigma^{n-1}\subset\Omega_{\delta}$ associated with $(\rho,\Phi)$, which is smooth outside a closed singular set $\mathcal{S}_{\Sigma}$ residing within a compact subset of $M^n$, such that if it is nonempty
\begin{equation}
\dim_{\mathcal M}\mathcal S_{\Sigma}\le n-8 .
\end{equation}
Moreover, $(\Sigma^{n-1},g_{\Sigma})$ satisfies equation \eqref{mean-curvature} away from the singular set, and has a single harmonically asymptotically flat end\footnote{When $n=3$, the 2-dimensional end $\E_{\Sigma}$ is still referred to as harmonically asymptotically flat, where Definition \ref{fonaoinoih} is extended to accommodate this case by removing the mass term from \eqref{fonaoinoih} as in Remark \ref{2.6}. In this situation, the mass is defined to be zero.} $\mathcal{E}_\Sigma$, with the property that
if $n\geq 4$ then its mass vanishes, $m(\E_{\Sigma},g_{\Sigma})=0$.
\end{proposition}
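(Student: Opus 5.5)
The proof assembles the construction sketched just before the statement into three steps: barriers and existence on truncated domains, compactness as $r\to\infty$, and regularity and asymptotics of the limit.

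\emph{Step 1: barriers.} Fix $\delta\in(0,\delta_0)$ and the pair $(\Phi,\Omega_\delta)$ from Lemma~\ref{potential-existence}. On the end $\mathcal E$ we have $\rho=1$ and $\Phi=0$, so the quantity in \eqref{boundary-barrier} reduces to the mean curvature of the coordinate slabs and cylinders. Harmonic asymptotics give
\[
g=\Bigl(1+\tfrac{2m}{(n-2)r^{n-2}}\Bigr)\delta+O(r^{-\mathring q}).
\]
A direct computation for this conformally flat leading term shows:
\begin{enumerate}
\item the planes $\{x^n=\pm\sigma\}$ have mean curvature of sign $-m\,\sigma|x|^{-n}$, up to lower order terms, with respect to the normal pointing out of $\mathcal Z_{r,\sigma}$;
\item the cylinder $\{|x'|=r\}$ has $H\approx (n-2)/r>0$.
\end{enumerate}
Since $m<0$, \eqref{boundary-barrier} therefore holds for $r,\sigma$ large. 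Near $\partial\Omega_\delta$, the divergence $\Phi\to-\infty$ makes the weighted area functional strictly prefer to exclude those regions, giving the barrier at $\partial_-$. With these barriers, the direct method in BV (lower semicontinuity of $\int\rho\,d|D\chi_\Omega|$, boundedness of $\rho\Phi$ on the compact set $\mathcal Z_{r,\sigma}\cap\{\Phi\ge -K\}$) together with the maximum-principle argument of \cite[Sections 3 \& 4]{chodosh2024generalized} yields a minimizer $\Omega_{r,\sigma}$. Its boundary stays off $\partial_\pm$ and meets $\partial_0$ orthogonally.

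\emph{Step 2: limit $r\to\infty$.} The key point is a uniform local mass bound. Comparing $\Omega_{r,\sigma}$ with $\Omega_{r,\sigma}\cup B$ and with $\Omega_{r,\sigma}\setminus B$ for small balls $B$ gives that $\Sigma_{r,\sigma}$ is $(\Lambda,r_0)$-almost minimizing, with $\Lambda$ locally bounded by $\sup|\Phi|+\sup|\nabla\log\rho|$. This in turn gives uniform area bounds $|\Sigma_{r,\sigma}\cap B_s(p)|\le Cs^{n-1}$. Compactness of integral currents then gives a subsequential limit, and \cite[Lemma A.3]{Eichmair2009PlateauMOTS} shows the limit is again almost minimizing. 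The barrier at $\partial\Omega_\delta$ is uniform in $r$, so $\Sigma\subset\Omega_\delta$. Nonemptiness follows because $\Sigma$ separates $\mathcal P^{+\sigma}$ from $\mathcal P^{-\sigma}$.

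\emph{Step 3: regularity and asymptotics.} By \cite[Theorem A.1]{Eichmair2009PlateauMOTS}, $\Sigma$ is smooth away from a closed set $\mathcal S_\Sigma$ of Hausdorff codimension at least $7$ in $\Sigma$. The first variation on the regular part gives \eqref{mean-curvature}. The Minkowski bound $\dim_{\mathcal M}\mathcal S_\Sigma\le n-8$ is quoted from \cite[Theorem~3.34]{BrendleWang2026}, which adapts Naber--Valtorta quantitative stratification to $\mu$-bubbles and applies to compact pieces. For the end, \cite[Theorem 4.5]{LesourdUngerYau2024} uses height bounds $|x^n|\le\sigma$ together with Allard regularity to show that outside a compact set $\Sigma$ is a graph $x^n=z(x')$ with the stated expansion. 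Since $\mathcal S_\Sigma$ lies in the graphical complement, it is contained in a compact set.

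Pulling back the harmonically flat $g$ to this graph, and using $|\nabla z|=O(|x'|^{2-n})$, gives
\[
g_\Sigma=\Bigl(1+\tfrac{2m}{(n-2)|x|^{n-2}}\Bigr)\delta_{\mathbb R^{n-1}}+O(|x'|^{-\mathring q'}),
\]
with $\mathring q'>n-3$. Since $|x|\approx|x'|$, the $|x|^{2-n}$ term decays faster than the critical $(n-1)$-dimensional mass rate $|x'|^{3-n}$. Hence the end is harmonically asymptotically flat with $m(\mathcal E_\Sigma,g_\Sigma)=0$ when $n\ge4$; the case $n=3$ falls under Remark~\ref{2.6}.

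The main obstacle is Step 2. One must rule out that $\Sigma_{r,\sigma}$ drifts off into the auxiliary ends, or degenerates to the faces $x^n=\pm\sigma$, as $r\to\infty$. I would first handle this with the uniform barriers of Step 1, combined with the sign of $\Phi$, which is negative off $\mathcal E$ and so penalizes volume there.
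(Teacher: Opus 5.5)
Your proposal is correct and follows essentially the same route as the paper. The shared steps are: free-boundary $\mu$-bubbles in the truncated regions $\mathcal Z_{r,\sigma}$, with the slab barriers coming from $m<0$ and the side-cylinder barrier from its $(n-2)/r$ mean curvature; the limit $r\to\infty$ via Lesourd--Unger--Yau and Eichmair's almost-minimizing compactness and regularity; the Minkowski bound from Brendle--Wang; and the graphical asymptotics of Lesourd--Unger--Yau, which give a single harmonically asymptotically flat end whose $|x'|^{2-n}$ leading term forces zero mass for $n\ge 4$. The paper itself only sketches these steps by citation, so your extra detail (the direct-method setup, nonemptiness via separation, and the explicit induced-metric expansion) fills in its argument rather than departing from it.
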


\begin{remark}\label{boundness-mean-cuvrature} 
Note that since $\Sigma^{n-1}$ is contained in $\Omega_\delta$ and $\Omega_\delta \setminus\E$ is a precompact subset of $M^n$, the functions $\rho$ and $|\nabla\rho|_g$ are uniformly controlled, namely there is a positive constant $C_0$ such that
\begin{equation}\label{rho-bound}
   0< C_0^{-1}\leq \rho\leq C_0, \quad\quad   |\nabla \log \rho|_g\leq C_0,\quad\quad \text{ on }\Sigma^{n-1}.
\end{equation}
Moreover, since $\Sigma^{n-1}$ stays a positive distance away from $\partial \Omega_\delta$, the function $|\Phi|$ is uniformly bounded on the $\mu$-bubble. Thus, equation \eqref{mean-curvature} implies that 
\begin{equation}\label{mean-curvature-bound}
    |H_{\Sigma}|\leq C_1\quad\quad \text{ on }\Sigma^{n-1}\setminus\mathcal{S}_{\Sigma},
\end{equation}
for some positive constant $C_1$.
\end{remark}

\subsubsection{The stability inequality} 
In this subsection we derive the stability inequality satisfied by the
limiting \(\mu\)-bubble. The stability obtained here is stronger than the
usual compactly supported stability: following the strong-stability argument
of Lesourd--Unger--Yau \cite{LesourdUngerYau2024}, with the height-picking step replaced by the
free-boundary approximation, one may pass to the limit in stability
inequalities for variations which do not vanish at infinity. This yields a
strong stability inequality for test functions which differ from a constant
by a weighted Sobolev function. We then combine this strong stability with
the scalar-curvature positivity property of the ambient metric to obtain a
refined inequality as in \cite[Definition 1.3]{BrendleWang2026} although with a modified coefficient, 
suitable for the dimension-reduction argument.


\begin{proposition}[Stability inequality]
\label{weighted-stability-inequality}
Consider the hypotheses and setting of Proposition \ref{existence-bubble}, and let $\Sigma^{n-1}\subset\Omega_{\delta}$ be the constructed
$\mu$-bubble associated with $(\rho,\Phi)$. For any 
$c\in \mathbb{R}$ let $f\in C^{\infty}(\Sigma^{n-1}\setminus\mathcal{S}_{\Sigma})$ be a test function such that $f$ vanishes in a neighborhood of $\mathcal{S}_{\Sigma}$, and $f-c\in W^{1,2}_{-\frac{n-3}{2}}(\E_{\Sigma})$ when $n\geq 4$
or $\supp(f-c)$ is compact in $\overline{\E}_{\Sigma}$ when $n=3$. Then
\begin{equation}\label{mu-stability}
	\int_{\Sigma^{n-1}}\!\!\!\! \left( \rho|\nabla_\Sigma f|_{g_{\Sigma}}^2 \!-\! \rho\left(Ric(\nu,\nu)\!+\!|A_\Sigma|^2 \!-\! \nabla^2\log\rho(\nu,\nu)\!+\! \nu(\Phi)\right) f^2 \right)dV_{g_{\Sigma}}\geq 0.
\end{equation}
Furthermore, if additionally
\begin{equation}\label{qequation}
Q=R_{g}-2\Delta_{g}\log\rho-\left(\frac{n-1}{n}+\varepsilon\right)|\nabla\log\rho|_{g}^2>0\quad\text{ on }\Omega_{\delta}
\end{equation}
for some $\varepsilon>0$, then there exists a constant $\delta_{n,\varepsilon} <\delta_0$ depending only on $n$ and $\varepsilon$, and a positive function $Q_1\in C^{\infty}(\Sigma^{n-1}\setminus \mathcal{S}_{\Sigma})$, such that for any $\delta\in (0, \delta_{n,\varepsilon})$ it holds that
\begin{align}\label{inductive1}
 &  \int_{\Sigma^{n-1}} \!\!\left(\rho|\nabla_\Sigma f|_{g_{\Sigma}}^2 \!+\!\frac{1}{2}\rho\!\left(\! R_{g_{\Sigma}}\!-2\Delta_{g_\Sigma}\log\rho-\left(\!\frac{n-1}{n}+\varepsilon\right)|\nabla_{\Sigma}\log\rho|_{g_{\Sigma}}^2\right)f^2 \right)dV_{g_{\Sigma}}\nonumber\\
 \geq &\int_{\Sigma^{n-1}} \rho Q_1f^2 dV_{g_{\Sigma}}.
\end{align}
\end{proposition}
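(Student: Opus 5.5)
The first inequality \eqref{mu-stability} is the second variation of the weighted functional $\mathcal F$. The plan is to derive it first for compactly supported $f$ and then extend it to test functions asymptotic to a constant $c$, using the free-boundary approximations from Step~1. Inequality \eqref{inductive1} should then follow from \eqref{mu-stability} by the Gauss equation and an algebraic completion of squares. In that algebra the barrier inequality \eqref{potential-main-ineq} controls $\nu(\Phi)$, and the smallness of $\delta$ absorbs the error $\delta^2\Phi^2$.

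\emph{Step 1: compactly supported variations.} For $f\in C^\infty_c(\Sigma\setminus\mathcal S_\Sigma)$, vary $\Sigma$ by $f\nu$ and differentiate twice. Use the first variation $H_\Sigma+\nu(\log\rho)-\Phi=0$ from \eqref{mean-curvature}, together with $\partial_t H=-\Delta_\Sigma f-(Ric(\nu,\nu)+|A_\Sigma|^2)f$ and $\partial_t\nu(\log\rho)=\nabla^2\log\rho(\nu,\nu)f-\langle\nabla_\Sigma\log\rho,\nabla_\Sigma f\rangle$. This gives \eqref{mu-stability} after one integration by parts against the weight $\rho$. The minimizing property passes to the limit $\Sigma$ through the current convergence of Step~2.

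\emph{Step 2: strong stability (the main obstacle).} Now let $f-c$ lie in the stated weighted space. I would apply the second variation on $\Sigma_{r_j,\sigma}$ with test functions equal to $c$ near the free boundary $\partial_0$. These variations are admissible because the free-boundary minimization allows the boundary to slide on $\mathcal D_{r,\sigma}$. The orthogonal contact produces a boundary term involving the second fundamental form of the cylinder $\mathcal D_{r,\sigma}$ and $\nu(\log\rho)$. This term is $O(r^{-1})\cdot r^{n-2}$ only after it is combined with the mass-sign barrier \eqref{boundary-barrier}. Following Lesourd--Unger--Yau, the aim is to show the boundary term has a favorable sign, or at least tends to $0$ along the sequence. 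Locally smooth convergence, and the graphical decay $z-(a_0+b_0|x'|^{3-n})\in C^{3,\alpha}_{3-n-\epsilon}$, then let us pass to the limit. Here the curvature terms decay like $|x'|^{-q-2}$ with $q>\frac{n-2}{2}$, so they are integrable against $c^2$ and against the weighted Sobolev tail $f-c$ for $n\ge4$. For $n=3$, $f-c$ is compactly supported and Step~1 plus a logarithmic cutoff suffices. Controlling this boundary/tail contribution is the delicate point.

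\emph{Step 3: the algebra for \eqref{inductive1}.} On $\Sigma\setminus\mathcal S_\Sigma$ use the Gauss equation
\[
Ric(\nu,\nu)+|A|^2=\tfrac12(R_g-R_{g_\Sigma}+H^2+|A|^2)
\]
and the decomposition
\[
\Delta_g\log\rho=\Delta_{g_\Sigma}\log\rho+\nabla^2\log\rho(\nu,\nu)+H\nu(\log\rho).
\]
Also use $|\nabla\log\rho|^2=|\nabla_\Sigma\log\rho|^2+\nu(\log\rho)^2$, and write $R_g-2\Delta_g\log\rho=Q+\mu|\nabla\log\rho|^2$ with $\mu=\frac{n-1}{n}+\varepsilon$. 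Moving the $R_{g_\Sigma}$, $\Delta_{g_\Sigma}\log\rho$ and $\mu|\nabla_\Sigma\log\rho|^2$ terms to the left, \eqref{mu-stability} becomes
\[
\mathrm{LHS}\ \ge\ \int\rho\Big(\tfrac12Q+\tfrac12|A|^2+\tfrac12H^2+H\nu(\log\rho)+\tfrac{\mu}{2}\nu(\log\rho)^2+\nu(\Phi)\Big)f^2 ,
\]
where LHS is the left-hand side of \eqref{inductive1}. Next use $|A|^2\ge H^2/(n-1)$ and substitute $\nu(\log\rho)=\Phi-H$. The $H,\Phi$ part becomes the quadratic form $\alpha H^2+(1-\mu)H\Phi+\frac{\mu}{2}\Phi^2$, with $\alpha=\frac{1}{2n(n-1)}+\frac{\varepsilon}{2}>0$. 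At $\varepsilon=0$ its discriminant vanishes exactly. Differentiating in $\varepsilon$ should show that for $\varepsilon>0$ the minimum over $H$ is at least $\kappa_{n,\varepsilon}\Phi^2$ with $\kappa_{n,\varepsilon}>0$; I would verify this explicitly.

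Finally, \eqref{potential-main-ineq} gives
\[
\tfrac12Q+\nu(\Phi)\ \ge\ \tfrac12Q-|\nabla\Phi|\ \ge\ \tfrac14Q-\tfrac12\delta^2\Phi^2 .
\]
Choose $\delta_{n,\varepsilon}$ with $\frac12\delta_{n,\varepsilon}^2<\kappa_{n,\varepsilon}$, and set $Q_1=\frac14 Q+(\kappa_{n,\varepsilon}-\frac12\delta^2)\Phi^2>0$. Since $f$ vanishes near $\mathcal S_\Sigma$, all integrations by parts on $\Sigma$ are legitimate.
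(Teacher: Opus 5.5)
Your Steps 1 and 3 follow the paper's argument. In Step 3, substitute $\nu(\log\rho)=\Phi-H$ into your form $\alpha H^2+(1-\mu)H\Phi+\frac{\mu}{2}\Phi^2$; the result is exactly minus the paper's Part 3 at $\delta=0$. Its discriminant is $-\frac{n\varepsilon}{n-1}$, since the $\varepsilon^2$ terms cancel, so $\kappa_{n,\varepsilon}=\frac{n^2\varepsilon}{2(1+n(n-1)\varepsilon)}>0$. Your choice $Q_1=\frac14Q+(\kappa_{n,\varepsilon}-\frac12\delta^2)\Phi^2$ then works as well as the paper's $Q_1=\frac14 Q$. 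Your remark that for $n=3$ strong stability already follows from compactly supported stability via a logarithmic cut-off is also correct.

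The real gap is Step 2. You leave the free-boundary term unresolved and point to the wrong mechanism: the mass-sign barrier \eqref{boundary-barrier} is a first-order condition used in the existence step, and it is not what controls the second variation. What controls it is the convexity of the side wall.

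\begin{itemize}
\item Because $\Sigma_{r,\sigma}$ meets $\mathcal D_{r,\sigma}$ orthogonally, $\nu$ is tangent to $\mathcal D_{r,\sigma}$ along $\partial\Sigma_{r,\sigma}$. Hence $f\nu$, with $f=c$ near $\partial_0$, is an admissible sliding variation.
\item Free-boundary stability then reads $I(f,f)\geq\int_{\partial\Sigma_{r,\sigma}}\rho\,\mathrm{II}^{\mathcal D}(\nu,\nu)f^2$. Here $I$ is the quadratic form in \eqref{mu-stability}, and $\mathrm{II}^{\mathcal D}$ is the second fundamental form of $\{|x'|=r\}$ with respect to its outward normal.
\item In the Euclidean metric $\mathrm{II}^{\mathcal D}(\nu,\nu)=r^{-1}|\nu^{\top}|^2\geq 0$, where $\nu^{\top}$ is the component tangent to the spheres $\{|x'|=r,\ x^n=\mathrm{const}\}$. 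The harmonic asymptotics perturb this by $O(r^{1-n})$.
\item One has $|\partial\Sigma_{r,\sigma}|\leq Cr^{n-2}$. For instance, test the first variation against the position field $x'$ and use the area bound $|\Sigma_{r,\sigma}\cap\mathcal E|\leq Cr^{n-1}$.
\item Therefore the boundary term is $\geq -Cc^2r^{-1}$. It has the favorable sign up to an error that vanishes as $r\to\infty$.
\end{itemize}
The paper does not spell this out either; it is implicit in its appeal to the free-boundary formulation and \cite[Theorem 4.21]{LesourdUngerYau2024}.

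Your tail estimate also needs correcting. Decay $|x'|^{-q-2}$ with only $q>\frac{n-2}{2}$ is not integrable on an $(n-1)$-dimensional end once $n\geq 5$. Instead use $Ric=O(r^{-n})$, which comes from the harmonic asymptotics. Together with $|\Sigma_{r_j,\sigma}\cap B_s|\leq Cs^{n-1}$, this makes the far-out contribution of $Ric(\nu,\nu)c^2$ uniformly small. The term $-|A_\Sigma|^2f^2$ is nonpositive, so it passes to the limit by Fatou in the right direction.
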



%

\begin{proof} The inequality \eqref{mu-stability} follows from the second variation formula for $\mu$-bubbles with free boundaries.  
Indeed, ordinary stability yields the inequality for compactly supported variations. In the present
free-boundary construction, the approximating bubbles meet the side cylinder
\(\partial_0=\mathcal D_{r,\sigma}\) freely. Since the vertical vector field
\(\partial_{x^n}\) is tangent to \(\partial_0\), the free-boundary stability
inequality may be applied to variations which do not vanish on
\(\partial_0\). This plays the same role as the height-picking argument in
\cite[Section~4.3]{LesourdUngerYau2024}.
Passing to the limit as \(r\to\infty\) gives the strong
stability inequality for the limiting noncompact \(\mu\)-bubble, namely for
test functions \(f\) with \(f-c\) in the appropriate weighted Sobolev space,
as in \cite[Theorem 4.21]{LesourdUngerYau2024}.

It remains to verify the stability inequality \eqref{inductive1}. All calculations to follow will be
computed on the regular set of $\Sigma^{n-1}$. Recall that two traces of the Gauss equations combined with
the Cauchy-Schwarz inequality produces 
\begin{align}
\begin{split}
		Ric_g(\nu,\nu)+|A_\Sigma|^2 &=\frac 1 2 R_{g}-\frac 1 2R_{g_\Sigma}
        +\frac 1 2 H_\Sigma^2+\frac 1 2 |A_\Sigma|^2\\
        &\geq \frac 1 2 R_{g}-\frac 1 2R_{g_\Sigma}+\frac{n}{2(n-1)} H_\Sigma^2
\end{split}
\end{align}
where $A_{\Sigma}$ denotes the second fundamental form, and the standard decomposition of ambient Laplacian with respect to a hypersurface gives
\begin{equation}
		\nabla^2\log \rho(\nu, \nu)=\Delta_g \log \rho-\Delta_{g_\Sigma}\log \rho-H_{\Sigma}\,\nu(\log\rho).	
\end{equation}
It follows that
\begin{align}
&-Ric(\nu,\nu)-|A_\Sigma|^2+ \nabla^2\log\rho(\nu,\nu)-\nu(\Phi) \nonumber\\
			&\leq~\frac {1} {2} R_{g_\Sigma}-\Delta_{g_\Sigma}\log\rho-\left(\frac{1}{2} R_g-\Delta_g\log\rho\right)-\frac{n}{2(n-1)} H_\Sigma^2-H_\Sigma\, \nu(\log\rho)+|\nu(\Phi)|\nonumber\\
			&\leq ~\frac{1}{2}R_{g_\Sigma}-\Delta_{g_\Sigma}\log\rho-\frac{1}{2}Q-\frac{1}{2}\left(\frac{n-1}{n}+\varepsilon\right)|\nabla\log\rho|_g^2+|\nu(\Phi)|\\
			&-\frac{n}{2(n-1)}\Phi^2+\frac{1}{n-1}\Phi\,\nu(\log\rho)+\frac{n-2}{2(n-1)}|\nu(\log\rho)|^2,\nonumber
\end{align}
where we have used \eqref{mean-curvature} and \eqref{qequation}.  The right-hand side of the last inequality can be decomposed into the following three parts.
\begin{align*}
&\textit{Part 1.}\quad \frac{1}{2} R_{g_\Sigma}-\Delta_{g_\Sigma}\log\rho-\frac 1 2\left(\frac{n-1}{n}+\varepsilon\right)|\nabla_{\Sigma}\log\rho|_{g_{\Sigma}}^2-\frac{1}{4} Q \\
&\textit{Part 2.}\quad\!\!-\frac{\delta^2}{2}\Phi^2+|\nu(\Phi)|-\frac 1 4 Q\leq 0 \\
&\textit{Part 3.} -\!\left(\!\frac{n}{2(n-1)}-\frac{\delta^2}{2}\right)\!\Phi^2 \!+\!\frac{1}{n-1}\Phi\, \nu(\log\rho)\!-\!\left(\!\frac{1}{2n(n-1)}+\frac{\varepsilon}{2}\!\right)\!|\nu(\log\rho)|^2 \!\leq 0
\end{align*} 
Observe that Part 1 is the desired part, while the nonpositivity of Part 2 follows from \eqref{potential-main-ineq}
for $\delta\in (0,\delta_0)$. Next, choose a positive $\delta_{n,\varepsilon}<\delta_0$ depending on $n$ and $\varepsilon$ so that 
\begin{equation}
\left(\frac{1}{n-1}\right)^2 -4\left(\frac{n}{2(n-1)}-\frac{\delta_{n,\varepsilon}^2}{2}\right)\left(\frac{1}{2n(n-1)}+\frac{\varepsilon}{2}\right)<0.
\end{equation}
Then for each $\delta\in(0,\delta_{n,\varepsilon})$ the inequality of Part 3 is satisfied. It follows that
for such $\delta$ we have
\begin{equation}\begin{split}
&-Ric(\nu,\nu)-|A_\Sigma|^2+ \nabla^2\log\rho(\nu,\nu)-\nu(\Phi) \quad \\
&\leq \frac{1}{2} R_{g_\Sigma}-\Delta_{g_\Sigma}\log\rho- \frac{1}{2}\left(\frac{n-1}{n}+\varepsilon\right)|\nabla_\Sigma\log\rho|_{g_\Sigma}^2-\frac{1}{4} Q.
\end{split}
\end{equation} 
Inequality \eqref{inductive1} is now obtained by applying \eqref{mu-stability} and setting $Q_1=\frac{1}{4} Q$.
\end{proof}

\subsection{The second blow up}\label{secondbu}
The purpose of this subsection is to find a complete metric on $\Sigma^{n-1}\setminus\mathcal{S}_{\Sigma}$ by a conformal blow-up of the singular set, while preserving the ADM mass. The next result will place the blown-up $\mu$-bubble in the context of so called weak $n$-data sets, which are studied in the next section.


\begin{theorem}[Blow-up of GMT singularities]\label{blow-up-2rd-GMT} 
Assume the hypotheses and setting of Proposition \ref{weighted-stability-inequality}, with $Q\in C^{\infty}(M^n)\cap C^{0,\alpha}_{-n-q_0}(\E_{\Sigma})$ where $\alpha\in(0,1)$ and $q_0 >0$. Then there exists a positive constant $\varepsilon'_n $ such that for any $\varepsilon\in (0, \varepsilon'_{n})$, there exists a positive function $Q'\in C^{\infty}(\Sigma^{n-1}\setminus\mathcal{S}_{\Sigma})\cap C^{0,\alpha}_{-n-q_0}(\E_{\Sigma})$, a positive function $\rho'\in C^{\infty}(\Sigma^{n-1}\setminus \mathcal{S}_{\Sigma})$, and a complete harmonically asymptotically flat metric $g_{\Sigma}'$ on $\Sigma^{n-1}\setminus \mathcal{S}_{\Sigma}$ with arbitrary ends such that
\begin{equation}
    \rho'=1 \text{ on } \E_{\Sigma}, \quad\text{and } \quad  m(\E_{\Sigma},g_{\Sigma}')=0. 
\end{equation}
Furthermore, for any $c\in \mathbb{R}$ and $f\in C^{\infty}(\Sigma^{n-1}\setminus\mathcal{S}_{\Sigma})$ with the property that $f$ vanishes in a neighborhood of $\mathcal{S}_{\Sigma}$, and $f-c\in W^{1,2}_{-\frac{n-3}{2}}(\E_{\Sigma})$ when $n\geq 4$
or $\supp(f-c)$ is compact in $\overline{\E}_{\Sigma}$ when $n=3$, it holds that
\begin{align}\label{blow-up2}
 &  \int_{\Sigma^{n-1}} \!\!\!\left(\rho'|\nabla_{\Sigma} f|_{g_{\Sigma}'}^2+\frac{1}{2}\rho'\left(\!R_{g'_{\Sigma}}\!-\!2\Delta_{g'_\Sigma}\log\rho'\!-\!\left(\frac{n-1}{n}+\varepsilon\right)|\nabla_{\Sigma}\log\rho'|_{g_{\Sigma}'}^2\right)\!f^2 \! \right)dV_{g'_{\Sigma}}\nonumber\\
 \geq &\int_{\Sigma^{n-1}} \rho' Q'f^2 dV_{g'_{\Sigma}}.
\end{align}
\end{theorem}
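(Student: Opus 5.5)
The plan is to follow the distance-function conformal blow-up of Brendle--Wang \cite{BrendleWang2026}, adapted to the integral (rather than pointwise) inequality \eqref{inductive1}. Write $N=n-1$ for the dimension of $\Sigma$ and $d$ for a regularization of $d_{g_\Sigma}(\cdot,\mathcal S_\Sigma)$. Near $\mathcal S_\Sigma$ the regularization should satisfy $C^{-1}d_{g_\Sigma}\le d\le Cd_{g_\Sigma}$, $|\nabla_\Sigma d|\le C$ and $|\nabla^2_\Sigma d|\le C d^{-1}$; this is possible by the standard Whitney-type smoothing, using the bounded mean curvature \eqref{mean-curvature-bound}. Let $\varphi$ be a cut-off equal to $1$ near $\mathcal S_\Sigma$ and supported in a compact set away from $\mathcal E_\Sigma$. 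Set
\begin{equation*}
w:=1+\tau\varphi\, d^{-\kappa},\qquad g'_\Sigma:=w^{2a}g_\Sigma,\qquad \rho':=\rho\,w^{b},
\end{equation*}
with exponents $a,b,\kappa>0$ and a small $\tau>0$ to be chosen.

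\emph{Completeness and end conditions.} Completeness near $\mathcal S_\Sigma$ only requires $a\kappa\ge1$: then $w^a\gtrsim d^{-1}$, so every curve approaching $\mathcal S_\Sigma$ has infinite $g'_\Sigma$-length. Since $w\equiv1$ on $\mathcal E_\Sigma$, we get $g'_\Sigma=g_\Sigma$ and $\rho'=\rho=1$ there. Hence the harmonic asymptotics and the vanishing mass from Proposition~\ref{existence-bubble} are inherited, and $Q'$ agrees with $Q_1=\frac14 Q$ on the end, which gives the required decay class.

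\emph{Transforming the integral inequality.} Given a test function $f$ as in the statement, apply \eqref{inductive1} to $\tilde f:=w^{s}f$. The exponent $s$ is fixed by $2s=b+(N-2)a$, which makes $\rho|\nabla\tilde f|^2\,dV_{g_\Sigma}$ match $\rho'|\nabla f|^2_{g'_\Sigma}dV_{g'_\Sigma}$ to leading order. The function $\tilde f$ is still admissible, since $w=1$ on the end and $f$ vanishes near $\mathcal S_\Sigma$. Then:
\begin{enumerate}
\item Expand $|\nabla(w^sf)|^2$. The cross term $2s\,w^{2s}f\langle\nabla f,\nabla\log w\rangle$ is integrated by parts, which is legitimate because $f$ vanishes near $\mathcal S_\Sigma$; it produces terms of the form $\rho w^{2s}\bigl(\Delta\log w+\langle\nabla\log\rho,\nabla\log w\rangle+2s|\nabla\log w|^2\bigr)f^2$.
\item Rewrite $R_{g'_\Sigma}$, $\Delta_{g'_\Sigma}\log\rho'$ and $|\nabla\log\rho'|^2_{g'_\Sigma}$ through the conformal formulas used in Proposition~\ref{blow-up1}, now in dimension $N$.
\item Collect everything into
\begin{equation*}
\text{LHS of \eqref{blow-up2}}\ \ge\ \int\rho' Q_1 w^{-2a}f^2\,dV'+\int\rho w^{2s}\Bigl(\alpha_1\tfrac{\Delta w}{w}+\alpha_2|\nabla\log w|^2+\alpha_3\langle\nabla\log\rho,\nabla\log w\rangle\Bigr)f^2\,dV.
\end{equation*}
\end{enumerate}
The mixed term with coefficient $\alpha_3$ is absorbed by Cauchy--Schwarz, using part of the $|\nabla\log\rho|^2$ coefficient; here $\varepsilon$ gives the needed slack, which is where the restriction $\varepsilon<\varepsilon'_n$ enters. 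The requirement $\alpha_2>0$, after this absorption, is a discriminant condition on $(a,b)$ analogous to \eqref{discriminant-condition'}. It is solvable because $\frac{n-1}{n}=\frac{N}{N+1}$ lies below the threshold for dimension $N$ together with the factor $\frac12$ in front of the curvature terms. I would fix $b$ as a root of the relevant quadratic and then take $\kappa=1/a$.

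\emph{The Laplacian term (main obstacle).} The expected difficulty is the term $\alpha_1\Delta w/w$, because $w$ is not harmonic. Near $\mathcal S_\Sigma$ the Hessian bound gives $|\Delta d^{-\kappa}|\le C d^{-\kappa-2}$, so $|\Delta w|/w\le C d^{-2}$ there. This is of the same order as $|\nabla\log w|^2\sim\kappa^2d^{-2}$, but with a constant that cannot simply be taken small. I would first try the following.
\begin{enumerate}
\item Keep the sign of $\alpha_1$ favourable by using the positive part of the Laplacian of $d^{-\kappa}$ coming from $|\nabla d|^2$, as in Brendle--Wang.
\item Control the remaining error by a Hardy-type inequality $\int d^{-2}h^2\le C\int(|\nabla h|^2+h^2)$ for $h$ vanishing near $\mathcal S_\Sigma$. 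This holds because $\dim_{\mathcal M}\mathcal S_\Sigma\le n-8=N-7$ (Proposition~\ref{existence-bubble}), so the singular set has codimension at least $7$ in $\Sigma$. A small multiple of the gradient term, reserved in advance, absorbs the error.
\item On the transition region $\operatorname{supp}\nabla\varphi$, the errors are $O(\tau)$ while $Q_1$ has a positive lower bound, so choosing $\tau$ small makes them harmless.
\end{enumerate}
Finally, set $Q':=\frac12 Q_1w^{-2a}$ plus any nonnegative leftover. This is positive, smooth off $\mathcal S_\Sigma$, and equal to $\frac12 Q_1$ on $\mathcal E_\Sigma$, which yields \eqref{blow-up2}.
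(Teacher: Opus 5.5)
\textbf{There is a genuine gap: the construction of $w$.} Your conformal bookkeeping is sound. It contains the paper's route as the special case $s=0$, $b=-(N-2)a$: the paper takes $g'_\Sigma=w^{\frac{n+1}{n-3}}g_\Sigma$ and $\rho'=w^{-\frac{n+1}{2}}\rho$, so that $\rho'|\nabla f|^2_{g'_\Sigma}dV_{g'_\Sigma}=\rho|\nabla f|^2_{g_\Sigma}dV_{g_\Sigma}$. Then \eqref{inductive1} is applied to $f$ itself and everything reduces to a pointwise inequality. The failure is in the step you flag as the main obstacle, and neither remedy closes it.

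With $d$ a (regularized) distance to the whole set $\mathcal S_\Sigma$,
$\Delta_{g_\Sigma}d^{-\kappa}=\kappa d^{-\kappa-2}\bigl((\kappa+1)|\nabla_\Sigma d|^2-d\,\Delta_{g_\Sigma}d\bigr)$ has no controlled sign.
\begin{itemize}
\item Superharmonicity fails: $d^{-\kappa}=\max_{p\in\mathcal S_\Sigma}d(\cdot,p)^{-\kappa}$ is a maximum of superharmonic functions. It acquires a positive singular Laplacian along the ridge where the nearest point of $\mathcal S_\Sigma$ jumps, and smoothing turns this into a positive term of order $d^{-\kappa-2}$.
\item Subharmonicity would need a lower bound on $|\nabla_\Sigma d|$ and a Laplacian comparison on $\Sigma$ near $\mathcal S_\Sigma$. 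Neither is available.
\item Your bound $|\nabla^2_\Sigma d|\le Cd^{-1}$ is unjustified: bounded $H_\Sigma$ controls only traces, not $|A_\Sigma|$.
\end{itemize}
Also, the construction the paper borrows from Brendle--Wang relies on strict superharmonicity. It does not use the positive $|\nabla d|^2$ part.

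\textbf{Why the Hardy fallback cannot work.} There is no spare Dirichlet energy. Write the target as $I(w^sf)+\int\rho w^{2s}Pf^2$, where $I(\cdot)\ge0$ is \eqref{inductive1}. All of $\int\rho|\nabla(w^sf)|^2$ is consumed by \eqref{inductive1}. If you reserve a fraction $\theta$ for Hardy, you can use only $(1-\theta)$ times \eqref{inductive1}. That leaves $\frac{\theta}{2}\int\rho\bigl(R_{g_\Sigma}-2\Delta_{g_\Sigma}\log\rho-\ldots\bigr)w^{2s}f^2$ uncontrolled, and $R_{g_\Sigma}$ is unsigned: by Gauss it contains $-|A_\Sigma|^2$, which is unbounded near $\mathcal S_\Sigma$. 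Moreover, your error carries an $O(1)$ constant, so no small multiple would suffice anyway.

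\textbf{The mixed term.} There is likewise no $|\nabla\log\rho|^2$ slack for a Cauchy--Schwarz absorption. Both \eqref{inductive1} and \eqref{blow-up2} carry the same coefficient $\frac{n-1}{n}+\varepsilon$, so these terms cancel exactly in $P$.

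\textbf{The missing idea.} Build $w=1+\tau\psi_{\mathcal S}$ as a superposition
$\psi_{\mathcal S}=\sum_l\sum_{p\in\mathcal T_l}r_l^{n-5}\zeta(d_g(\cdot,p)^2)$
over maximal $r_l$-separated nets of $\mathcal S_\Sigma$, with $\zeta(t)=t^{\frac{3-n}{2}}+t^{\frac{7-2n}{4}}$ near $0$ and $d_g$ the ambient distance.
\begin{itemize}
\item Each term is a strict supersolution near $p$ of $L_\Sigma=\Delta_{g_\Sigma}+(n-3)(n^{-1}-\varepsilon)g_\Sigma(\nabla_\Sigma\,\cdot,\nabla_\Sigma\log\rho)$. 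This uses only $\nabla^2 d_g(\cdot,p)^2=2g+O(d_g^2)$, $|\nabla_\Sigma d_g(\cdot,p)|\le 1$, and the bounds on $H_\Sigma$ and $\nabla\log\rho$.
\item The second power in $\zeta$ supplies a strictly negative leading term, which dominates the drift (mixed) term.
\item Sums of supersolutions remain supersolutions. The bound $\dim_{\mathcal M}\mathcal S_\Sigma\le n-8$ gives convergence of the sum and $\psi_{\mathcal S}\gtrsim d^{-2}$, hence completeness.
\end{itemize}
With this $w$, the required inequality holds pointwise and no Hardy inequality is needed.
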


\subsubsection{Local blow-up function}
Because the geometry of \(\Sigma^{n-1}\) may be poorly controlled near the singular
set \(\mathcal{S}_{\Sigma}\), we cannot directly construct a blow-up function on
\(\Sigma^{n-1}\) by using a Green's function with pole set \(\mathcal{S}_{\Sigma}\).
Instead, we use the ambient manifold \((M^n,g)\).  The ambient
distance function has controlled Hessian estimates, and this allows us to
construct a local blow-up function on \(\Sigma^{n-1}\) which replaces the Green's
function used in the first blow-up. We first establish a local blow-up function
following the construction of Brendle--Wang \cite[Section 3.7]{BrendleWang2026}.
In order for this to be meaningful we must have $n\geq 8$.



\begin{lemma}[Blow-up function near singularities]\label{local-blow-up} 
Let $U_1\subset M^n$ be a precompact open set containing $\mathcal{S}_{\Sigma}$, and assume that $n\geq 8$. Then there is a precompact open set $U_2 \subset M^n$ with 
\begin{equation}
\mathcal{S}_{\Sigma}\subset \overline{U}_2\subset U_1,
\end{equation}
and a nonnegative function $\psi_{\mathcal{S}}\in C^{\infty}(M^n \setminus\mathcal{S}_{\Sigma})$ with $\supp \,\psi_{\mathcal{S}}\subset U_1$ such that
\begin{equation}\label{3.71}
  \psi_{\mathcal{S}}(x) \geq Cd_g(x, \mathcal{S}_{\Sigma})^{-2} \quad \text{ for } x\in U_2 \setminus\mathcal{S}_{\Sigma}
\end{equation} 
and for some constant $C>0$, and
\begin{equation}\label{3.70}
   \Delta_{g_\Sigma}\psi_{\mathcal{S}} +(n-3)(n^{-1}-\varepsilon) g_\Sigma(\nabla_\Sigma\psi_{\mathcal{S}}, \nabla_\Sigma \log \rho)<0 \quad\text{ on } (\Sigma^{n-1} \setminus\mathcal{S}_{\Sigma})\cap U_2 .
\end{equation}
\end{lemma}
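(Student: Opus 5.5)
The plan is to build $\psi_{\mathcal S}$ as a superposition of regularized inverse powers of ambient distances to points of $\mathcal S_\Sigma$, following Brendle--Wang \cite[Section 3.7]{BrendleWang2026}. The key point is that such functions are strictly superharmonic along \emph{any} hypersurface with bounded mean curvature once the exponent is below $n-3$; the Hessian of the ambient distance is controlled, while the singular set of $\Sigma^{n-1}$ plays no role.

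\emph{Step 1: a single bump.} Fix $p\in\mathcal S_\Sigma$ and $s>0$, set $r=d_g(\cdot,p)$, and put
\[
f_{p,s}=\bigl(r^2+s^2\bigr)^{-k/2},\qquad k=2+\beta .
\]
On a ball of radius $r_*$ smaller than the injectivity and convexity radius (uniform on $\overline U_1$), the Hessian comparison gives $\nabla^2 r^2=2g+O(r^2)$. Hence $\nabla^2 f_{p,s}$ has radial eigenvalue at most $k(k+1)r^2(r^2+s^2)^{-k/2-2}$ and tangential eigenvalues about $-k(r^2+s^2)^{-k/2-1}$, up to $O(1)$ relative errors.

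On the regular part of $\Sigma$ I will use
\[
\Delta_{g_\Sigma}f=\Delta_g f-\nabla^2 f(\nu,\nu)-H_\Sigma\,\nu(f).
\]
Bounding $-\nabla^2 f(\nu,\nu)$ by minus the smallest eigenvalue gives
\[
\Delta_{g_\Sigma}f\le k\,(k+3-n+o(1))\,(r^2+s^2)^{-k/2-1}+C_1 k\,(r^2+s^2)^{-(k+1)/2},
\]
where $C_1$ is the mean curvature bound \eqref{mean-curvature-bound}. The gradient term in \eqref{3.70} is bounded by $C_0 k (r^2+s^2)^{-(k+1)/2}$ using \eqref{rho-bound}. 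So if $k<n-3$, i.e.\ $\beta<n-5$, both errors carry an extra factor $(r^2+s^2)^{1/2}$, and after shrinking the radius we get
\[
\Delta_{g_\Sigma}f_{p,s}+(n-3)(n^{-1}-\varepsilon)\,g_\Sigma(\nabla_\Sigma f_{p,s},\nabla_\Sigma\log\rho)\le -c\,(r^2+s^2)^{-k/2-1}
\]
on $B_{r_*}(p)$. Beyond $r_*$, I multiply by a cutoff; the resulting errors are bounded there and will be absorbed in Step 3.

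\emph{Step 2: superposition over scales.} By $\dim_{\mathcal M}\mathcal S_\Sigma\le n-8$, for any $\eta>0$ and each $\ell$ the set $\mathcal S_\Sigma$ is covered by $N_\ell\le C2^{\ell(n-8+\eta)}$ balls $B_{2^{-\ell}}(x_{\ell,j})$ with centers in $\mathcal S_\Sigma$. I define
\[
\psi=\sum_{\ell}\sum_j s_\ell^{\beta}f_{x_{\ell,j},s_\ell},\qquad s_\ell=2^{-\ell},
\]
with $\beta\in(n-8+\eta,n-5)$, say $\beta=n-7$; this is where $n\ge 8$ is used.

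\emph{Lower bound \eqref{3.71}.} If $d(x,\mathcal S_\Sigma)=d\approx s_\ell$, some center lies within $2s_\ell$ of $x$, so its term is $\gtrsim s_\ell^\beta s_\ell^{-2-\beta}=s_\ell^{-2}\approx d^{-2}$.

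\emph{Convergence.} For fixed $x$ at distance $d$, count the centers at scale $s$ in dyadic shells. Coarse scales $s\ge d$ contribute $\lesssim s^{-2}$ each, which requires $k>n-8+\eta$; their sum is $\lesssim d^{-2}$. Fine scales $s<d$ contribute $\lesssim s^{\beta}d^{-2-\beta}(d/s)^{n-8+\eta}$, which is summable because $\beta>n-8+\eta$. Derivatives are handled the same way, so the series converges in $C^2_{loc}(M^n\setminus\mathcal S_\Sigma)$.

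\emph{Step 3: cutoffs and conclusion.} Set $\psi_{\mathcal S}=\chi\,\psi$, where $\chi$ is a cutoff equal to $1$ near $\mathcal S_\Sigma$ and supported in $U_1$. The inequality \eqref{3.70} is obtained by summing Step 1 over all terms. The negative main parts add up to at least $c\,d^{-4}$ near $\mathcal S_\Sigma$, using the same lower-bound argument with exponent $k+2$. The positive errors come from the local cutoffs at radius $r_*$ and from $\chi$, and they are uniformly bounded. Choosing $U_2$ a sufficiently small neighborhood of $\mathcal S_\Sigma$ therefore makes the total strictly negative on $(\Sigma\setminus\mathcal S_\Sigma)\cap U_2$.

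I expect the main obstacle to be the bookkeeping in Step 2. One must verify that the window $n-8<\beta<n-5$ gives simultaneous convergence of $\psi$ and of its first two derivatives, and that the summed negative contributions dominate the summed positive error terms uniformly near $\mathcal S_\Sigma$. If the crude eigenvalue bound for $\nabla^2 f(\nu,\nu)$ turns out too lossy, I would instead use the exact identity $\Delta_\Sigma r^2=2(n-1)-2r H_\Sigma\langle\partial_r,\nu\rangle+O(r^2)$ together with $|\nabla_\Sigma r|^2\le1$. This identity does not depend on the angle between $\Sigma$ and the radial direction.
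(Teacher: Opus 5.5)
Your proposal is correct and is essentially the paper's construction, which also follows Brendle--Wang. The shared structure is: single-point functions built from the ambient distance, which are strictly $L_\Sigma$-superharmonic near $p$ because the exponent is below $n-3$; these are superposed over dyadic covers or separated sets with scale weights made summable by $\dim_{\mathcal M}\mathcal S_\Sigma\le n-8$; and the term at the scale of $d(x,\mathcal S_\Sigma)$ supplies both the $d^{-2}$ lower bound and the dominant negative contribution against a uniformly bounded positive error. The only difference is that the paper uses the unregularized two-term profile $d^{3-n}+d^{(7-2n)/2}$ with weights $r_l^{n-5}$, where you use a single regularized power with weight $s^{k-2}$.

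One caveat concerns your convergence bookkeeping. The dyadic-shell count, and the resulting upper bound $\psi\lesssim d^{-2}$, presuppose a local, Assouad-type packing bound that the Minkowski dimension does not provide. That count is unnecessary, however. The crude estimate $\psi(x)\le d^{-k}\sum_\ell N_\ell s_\ell^{\beta}<\infty$, and the analogous estimate for derivatives, gives convergence exactly as in the paper. Your Step 3 likewise needs only the same global sum $\sum_\ell N_\ell s_\ell^{\beta}$.
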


\begin{proof} 
\emph{Step 1: Construction of the one-point blow-up function.}
Choose a sufficiently small $t_*>0$ so that $\{x\in M^n : d_g(x,\mathcal{S}_{\Sigma})\leq \sqrt{t_*}\}$ is contained in $U_1$, and pick a smooth monotone nonnegative function $\zeta: \mathbb{R}_+\rightarrow \mathbb{R}_+$ with 
\begin{equation}
 \zeta(t)=t^{\frac{3-n}{2}}+t^{\frac{7-2n}{4}} \text{ for }t\in(0,t_*/2],  \quad \quad\quad  \zeta(t)=0 \text{ for }t\in[t_*,\infty).
\end{equation} 
For any point $p\in \mathcal{S}_\Sigma$ define 
\begin{equation}
    \eta_{p}(x)=d^2_g(x, p), \quad \quad\quad \psi_{p}(x)=\zeta(\eta_p(x)).
\end{equation}
Note that by choosing $t_*$ smaller if necessary to achieve $\sqrt{t_*}<\frac{1}{2}\mathrm{inj}_{(M^n,g)}(p)$ for all $p\in\mathcal{S}_{\Sigma}$, we can ensure that $\psi_p \in C^{\infty}(M^n \setminus\mathcal{S}_{\Sigma})$. Moreover,  $\supp \,\psi_p \subset U_1$.
Since $\overline{U}_1$ is compact, there exists a constant $C>0$ such that on the $(M^n,g)$-geodesic ball $B_{\!\!\sqrt{t_*/2}}(p)$ it holds that
\begin{equation}
|\nabla^2 \eta_p\!-\!2g|_g\leq\! C\eta_p,\quad
\psi_p=\eta_p^{\frac{3-n}{2}}\!+\eta_p^{\frac{7-2n}{4}},
\quad
 |\nabla\psi_p|_g\leq C(\eta_p^{\frac{2-n}{2}}\!+\eta_p^{\frac{5-2n}{4}}),
\end{equation}
and
\begin{align}
\nabla^2 \psi_p &=\left(\left(\tfrac{3-n}{2}\right) \eta_p^{\frac{1-n}{2}}+\left(\tfrac{7-2n}{4}\right) \eta_p^{\frac{3-2n}{4}}\right)\nabla^2 \eta_p \\
&\quad+\left(\left(\tfrac{3-n}{2}\right)\left(\tfrac{1-n}{2}\right)\eta_p^{-\frac{1+n}{2}}+\left(\tfrac{7-2n}{4}\right)\left(\tfrac{3-2n}{4}\right)\eta_p^{-\frac{1+2n}{4}}\right)d\eta_p \otimes d\eta_p.\nonumber
\end{align}
Combining this with \eqref{rho-bound} and \eqref{mean-curvature-bound}, on $(\Sigma^{n-1}\setminus\mathcal{S}_{\Sigma})\cap B_{\!\!\sqrt{t_*/2}}(p)$, produces
\begin{align}\label{point-pde-est-near-singularity}
     \Delta_{g_\Sigma} \psi_p+&(n-3)({n^{-1}}-\varepsilon)g_{\Sigma}(\nabla_\Sigma\psi_p, \nabla_\Sigma\log\rho) \quad \quad \nonumber\\
     &\leq \Delta_{g} \psi_p-\nabla^2\psi_p(\nu, \nu)-H_\Sigma \,\nu(\psi_p)+C_1 |\nabla\psi_p|_g\\
     &\leq \Delta_{g} \psi_p-\nabla^2\psi_p(\nu, \nu)+C_2 |\nabla\psi_p|_g \nonumber\\
     &\leq -\left(\tfrac{2n-7}{4}\right)\eta_p^{\frac{3-2n}{4}}+C_{3} (\eta_p^{\frac{2-n}{2}}+\eta_p^{\frac{3-n}{2}} + \eta^{\frac{5-2n}{4}}_p +\eta^{\frac{7-2n}{4}}_p)\leq -\left(\tfrac{2n-7}{8}\right)\eta_p^{\frac{3-2n}{4}}\nonumber
\end{align}
where the last two inequalities follow by choosing $t_*$ even smaller, if necessary; here and below uniform positive constants will be denoted by $C_i$, $i\in \mathbb{Z}_+$. Moreover, on the complimentary domain $(\Sigma^{n-1}\setminus\mathcal{S}_{\Sigma})\cap B_{\!\!\sqrt{t_*/2}}(p)^c$ we have
\begin{equation}\label{point-pde-est-away-sing}
\Delta_{g_\Sigma} \psi_p+(n-3)({n^{-1}}-\varepsilon)g_{\Sigma}(\nabla_\Sigma\psi_p, \nabla_\Sigma\log\rho) \leq C_4.
\end{equation} 

\medskip

\noindent\emph{Step 2: Construction of the desired blow-up function.}
A subset $\mathcal{T}\subset\mathcal{S}_{\Sigma}$ is called an \textit{$r$-separated subset} if $B_r(p)\cap B_r(p')=\emptyset$ for any two distinct points $p, p'\in\mathcal{T}$. Set $r_l=2^{-l}\sqrt{t_*}$ and take a maximal $r_l$-separated subset $\mathcal{T}_l\subset \mathcal{S}_{\Sigma}$, for each $l\in\mathbb{Z}_+$. The Minkowski dimension bound $\dim_{\mathcal M}\mathcal S_{\Sigma}\le n-8$, and compactness of the singular set, imply that $\mathcal{T}_l$ is a finite and satisfies the cardinality inequality
$|\mathcal{T}_l|\leq C_{5}r_l^{\mathring{a}-n}$,
for any $\mathring{a}<8$. By restricting further so that $\mathring{a}\in (5, 8)$, we also obtain
\begin{equation}
\sum_{l=1}^\infty r_l^{n-5}|\mathcal{T}_l|<\infty. 
\end{equation}
Define the blow-up function $\psi_{\mathcal{S}}:M^n \setminus\mathcal{S}_{\Sigma}\rightarrow\mathbb{R}_{+}$ by
\begin{equation}
	\psi_{\mathcal{S}}(x)=\sum_{l=1}^\infty\sum_{p\in \mathcal{T}_l}r_l^{n-5}\psi_{p}(x),
\end{equation}
and note that
\begin{align}
\begin{split}
\psi_{\mathcal{S}}(x)\leq C_6\sum\limits_{l=1}^\infty r_l^{n-5} |\mathcal{T}_l|d_g(x, \mathcal{S}_\Sigma)^{3-n}\leq C_{7}d_g(x,\mathcal{S}_\Sigma)^{3-n}. 
\end{split}
\end{align}
This shows that $\psi_{\mathcal{S}}$ is well-defined, and similar estimates show that it is smooth on $ M^n \setminus\mathcal{S}_{\Sigma}$. Moreover, $\supp \,\psi_{\mathcal{S}}\subset U_1$.
\medskip

\noindent\emph{Step 3: Construct $U_2$ and verify the properties of $\psi_{\mathcal{S}}$.} 
Let $t_{**}\in(0,t_*)$ be sufficiently small, to be chosen below, and set
\begin{equation*}
U_2:=\{~x\in M^n :d_g(x,\mathcal{S}_\Sigma)<t_{**}\}.     
\end{equation*} 
Take $l_*>1$ depending on $t_{**}$ such that for any $x\in U_2 \setminus \mathcal{S}_\Sigma$, there  
is an integer $l_x>l_*$ with the property 
\begin{equation}
r_{l_x+1}\leq d_g(x,\mathcal{S}_\Sigma)\leq r_{l_x}.
\end{equation} 
This ensures the existence of a point $p_{l_x}\in \mathcal{T}_{l_x}$ with $d_g(x,p_{l_x})\leq 2r_{l_x}$. It follows that
\begin{equation}
    \psi_{\mathcal{S}}(x)\geq r_{l_x}^{{n-5}}\psi_{p_{l_x}}(x)=r_{l_x}^{n-5}d_g(x, p_{l_x})^{3-n}\geq C_{8} r_{l_x}^{-2}\geq C_{9} d_g(x,\mathcal{S}_\Sigma)^{-2},
\end{equation}
which establishes \eqref{3.71}. 

It remains to verify \eqref{3.70}. Let $x\in(\Sigma^{n-1} \setminus\mathcal{S}_{\Sigma})\cap U_2$.
For convenience set
\begin{equation}
        L_\Sigma
        :=
        \Delta_{g_\Sigma}
        +
        (n-3)(n^{-1}-\varepsilon)
        g_{\Sigma}(\nabla_\Sigma(\cdot),\nabla_\Sigma\log\rho).
\end{equation}
By the choice of \(l_*\), the point \(x\) lies in
\(B_{\!\!\sqrt{t_*/2}}(p_{l_x})\). Hence \eqref{point-pde-est-near-singularity}
gives
\begin{equation}
        L_\Sigma\psi_{p_{l_x}}(x)
        \le
        -C_{10}\,r_{l_x}^{\frac{3-2n}{2}}
\end{equation}
for some constant $C_{10}>0$, while \eqref{point-pde-est-away-sing} implies that all other terms appearing in the sum $L_{\Sigma}\psi_{\mathcal{S}}$
have a uniform upper bound $L_\Sigma\psi_p(x)\le C_{11}$.
Therefore
\begin{equation}
        L_\Sigma\psi_{\mathcal S}(x)
        \le
        -C_{10}\,r_{l_x}^{-\frac{7}{2}}
        +
        C_{11}
        \sum_{\ell=1}^{\infty}
        \sum_{p\in\mathcal T_\ell} r_\ell^{n-5}  \\
        \le
        -C_{10}\,r_{l_x}^{-\frac{7}{2}}+C_{12}.
\end{equation}
Finally, by choosing \(t_{**}\) sufficiently
small, we can ensure that \(r_{l_x}\) is sufficiently small for every
\(x\in U_2\setminus\mathcal S_\Sigma\), to achieve $-C_{10}\,r_{l_x}^{-\frac{7}{2}}+C_{12}<0$,
which yields the desired result.
\end{proof}

\subsubsection{Gobal blow-up function}

\begin{lemma}\label{blow-up-fun-GMT} 
Consider the hypotheses and setting of Proposition \ref{weighted-stability-inequality}, with $n\geq 8$. Then there exists a function
$w\in C^{\infty}(M^n \setminus\mathcal{S}_{\Sigma})$ with $w\geq 1$, such that $w=1$ on $\mathcal{E}_{\Sigma}$, 
for some constant $C>0$ it holds that
\begin{equation}\label{twoside-behav}
w(x)\geq C d_g(x, \mathcal{S}_{\Sigma})^{-2} \quad \text{ for } x\in U_2 \setminus\mathcal{S}_{\Sigma}
\end{equation}
where $U_2 \supset \mathcal{S}_{\Sigma}$ is as in Lemma \ref{local-blow-up}, and
\begin{equation}\label{Q-1-term}
Q_1-\frac{n+1}{n-3}\left(\frac{\Delta_{g_\Sigma}w}{w}+(n-3)(n^{-1}-\varepsilon)g_{\Sigma}(\nabla_{\Sigma}\log \rho, \nabla_\Sigma\log w)\right)>0
\end{equation}
on $\Sigma^{n-1}\setminus \mathcal{S}_\Sigma$.
\end{lemma}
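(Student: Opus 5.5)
We take $w := 1+\tau\psi_{\mathcal S}$, where $\psi_{\mathcal S}$ and $U_2\subset U_1$ are supplied by Lemma~\ref{local-blow-up} and $\tau>0$ is a small constant chosen at the end. The set $U_1$ must be fixed first: we take it to be a precompact neighborhood of the compact set $\mathcal S_{\Sigma}$ that is disjoint from the end $\mathcal E_{\Sigma}$. This is possible because $\mathcal S_{\Sigma}$ lies in a compact subset of $M^n$ and the end portion of $\Sigma^{n-1}$ is a smooth graph far out.

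With these choices the first two properties are immediate. Since $\psi_{\mathcal S}\ge 0$ is smooth on $M^n\setminus\mathcal S_{\Sigma}$, we get $w\ge 1$ and $w\in C^\infty(M^n\setminus\mathcal S_{\Sigma})$. Since $\supp\psi_{\mathcal S}\subset U_1$, we get $w=1$ on $\mathcal E_{\Sigma}$. On $U_2\setminus\mathcal S_{\Sigma}$, estimate \eqref{3.71} gives $w\ge \tau\psi_{\mathcal S}\ge C\tau\, d_g(x,\mathcal S_{\Sigma})^{-2}$, which is \eqref{twoside-behav} with the constant $C\tau$.

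It remains to verify \eqref{Q-1-term}. Write $L_\Sigma$ for the operator defined in the proof of Lemma~\ref{local-blow-up}. Since $\nabla w=\tau\nabla\psi_{\mathcal S}$, the bracket in \eqref{Q-1-term} is exactly
\begin{equation*}
\frac{\Delta_{g_\Sigma}w}{w}+(n-3)(n^{-1}-\varepsilon)\,g_\Sigma(\nabla_\Sigma\log\rho,\nabla_\Sigma\log w)=\frac{\tau L_\Sigma\psi_{\mathcal S}}{w}.
\end{equation*}
The quantity in \eqref{Q-1-term} is therefore $Q_1-\frac{n+1}{n-3}\,\tau L_\Sigma\psi_{\mathcal S}/w$. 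We check its positivity on three regions.
\begin{itemize}
\item On $(\Sigma^{n-1}\setminus\mathcal S_{\Sigma})\cap U_2$, inequality \eqref{3.70} gives $L_\Sigma\psi_{\mathcal S}<0$. The correction term is then nonnegative, and the whole expression is at least $Q_1>0$, for every $\tau>0$.
\item Off $U_1$, we have $\psi_{\mathcal S}\equiv 0$, so the expression equals $Q_1>0$.
\item On the transition region $K:=\Sigma^{n-1}\cap(\overline U_1\setminus U_2)$, which is compact and disjoint from $\mathcal S_{\Sigma}$, the surface is smooth. There $\psi_{\mathcal S}$ has bounded derivatives up to second order, and $|\nabla\log\rho|$ and $|H_\Sigma|$ are bounded by Remark~\ref{boundness-mean-cuvrature}. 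Hence $|L_\Sigma\psi_{\mathcal S}|\le C_K$ on $K$.
\end{itemize}

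On $K$ we also have $Q_1=\tfrac14 Q$ with $Q>0$ smooth, so $Q_1\ge c_K>0$. Using $w\ge 1$, choosing $\tau<c_K(n-3)/\big((n+1)C_K\big)$ gives positivity on $K$. This completes the verification of \eqref{Q-1-term}.

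The main point needing care is the uniform bound of $L_\Sigma\psi_{\mathcal S}$ on $K$. The estimate \eqref{point-pde-est-away-sing} only gives a one-sided bound for each single $\psi_p$, while here we need to control the whole infinite sum. We plan to do this as follows. For $x\in K$, the distance $d_g(x,\mathcal S_{\Sigma})$ is bounded below, so $\psi_p$ and its ambient first and second derivatives are uniformly bounded at $x$ for all $p\in\mathcal S_{\Sigma}$. The weights satisfy $\sum_l r_l^{n-5}|\mathcal T_l|<\infty$, so the series for $\psi_{\mathcal S}$ and its derivatives converge uniformly near $K$. Converting ambient derivatives to tangential ones uses only the bounded second fundamental form of $\Sigma^{n-1}$ on the compact smooth piece $K$. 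Alternatively, only the upper bound $L_\Sigma\psi_{\mathcal S}\le C$ is needed, and \eqref{point-pde-est-away-sing} summed with these weights already provides it, as in Step~3 of Lemma~\ref{local-blow-up}.
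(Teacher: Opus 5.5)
Your proposal is correct and matches the paper's proof: the paper also sets $w=1+\tau\psi_{\mathcal S}$, gets the lower bound from \eqref{3.71}, uses \eqref{3.70} on $U_2$ and the vanishing of $\psi_{\mathcal S}$ off $U_1$, and takes $\tau$ small on the precompact transition region $U_1\setminus U_2$. Your extra steps fill in details the paper leaves implicit: choosing $U_1$ disjoint from $\mathcal E_\Sigma$ so that $w=1$ there, and justifying the uniform upper bound on $L_\Sigma\psi_{\mathcal S}$ over the transition region.
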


\begin{proof} 
Let $\tau>0$ be a constant to be determined, and set 
\begin{equation}
w=1+\tau \psi_{\mathcal{S}}.
\end{equation} 
where $\tau$ is a positive constant to be determined. Then immediately $w\geq 1$, and also \eqref{3.71} implies \eqref{twoside-behav}. Furthermore, since $\supp\, \psi_{\mathcal{S}}\subset U_1$ we find that \eqref{Q-1-term} holds
on $\Sigma^{n-1}\cap (M^n \setminus U_1)$ due to the positivity of $Q_1$. On the other hand, \eqref{3.70} implies \eqref{Q-1-term} on $(\Sigma^{n-1}\setminus\mathcal{S}_{\Sigma})\cap U_2$. Moreover, since $U_1 \setminus U_2$ is precompact and $Q_1$ is positive there, it follows that we can find $\tau>0$ to ensure that \eqref{Q-1-term} holds on $\Sigma^{n-1}\cap (U_1 \setminus U_2)$ as well.
\end{proof}

\subsubsection{Proof of Theorem \ref{blow-up-2rd-GMT}}
\begin{plainproof}
Consider a complete Riemannian manifold $(M^n,g)$ with a harmonically asymptotically
flat end $\mathcal{E}$ having negative mass $m(\E,g)<0$, and possibly with additional arbitrary ends.
Let \(\rho\in C^{\infty}(M^n)\) be a positive weight function with $\rho=1$ on $\E$, and let \(\Phi\) and $\Omega_{\delta}$ be the barrier potential and domain constructed in Lemma \ref{potential-existence} with respect
to a positive $Q\in C^{\infty}(M^n)\cap C^{0,\alpha}_{-n-q_0}(\E_{\Sigma})$, for some $\delta\in(0,\delta_0)$. Let $\Sigma^{n-1}\subset\Omega_{\delta}$ be the $\mu$-bubble associated with $(\rho,\Phi)$ given by Proposition \ref{existence-bubble}, with singular set denoted by $\mathcal{S}_{\Sigma}$. For $n\geq 8$ define
\begin{equation}
    g'_{\Sigma}=w^{\frac{n+1}{n-3}}g_\Sigma,\quad\quad\quad \rho'= w^{-\frac{n+1}{2}}\rho,
\end{equation}
where the positive function $w\in C^{\infty}(M^n \setminus \mathcal{S}_{\Sigma})$ is provided by Lemma \ref{blow-up-fun-GMT}, and for $3\leq n\leq 7$ define $g'_{\Sigma}=g_{\Sigma}$ and $\rho'=\rho$. Clearly $\rho'\in C^\infty(\Sigma^{n-1}\setminus\mathcal{S}_{\Sigma})$ and $\rho'=1$ on $\mathcal{E}_{\Sigma}$. Moreover, $g'_{\Sigma}$ is harmonically asymptotically flat with zero mass, $m(\mathcal{E}_{\Sigma},g'_{\Sigma})=0$.

We next show that the metric $g'_{\Sigma}$ is complete, and assume that $n\geq 8$ otherwise there is nothing to prove. Since  $\Sigma^{n-1}\setminus \mathcal{S}_{\Sigma}$ has ends, namely $\mathcal{E}_{\Sigma}$ and $(\Sigma^{n-1}\setminus\mathcal{S}_{\Sigma})\cap U_2$, and $(\mathcal{E}_{\Sigma},g'_{\Sigma})$ is harmonically asymptotically flat, it is sufficient to study the behavior of a $g'_{\Sigma}$-unit-speed curve $\gamma: [0, L)\rightarrow (\Sigma^{n-1}\setminus\mathcal{S}_{\Sigma})\cap U_2$ with $\lim_{t\rightarrow L}\gamma(t)\in \mathcal{S}_{\Sigma}$. Indeed, observe that \eqref{twoside-behav} implies
\begin{equation}
    L_{g'_{\Sigma}}(\gamma)=\int^{L}_0|\gamma'(t)|_{g_\Sigma}w(\gamma(t))^{\frac{n+1}{2(n-3)}} dt\geq c\int_0^L (L-t)^{-\frac{n+1}{n-3}} dt=\infty
\end{equation}
where $c>0$ is a constant, from which the desired conclusion follows.

It remains to verify the stability inequality \eqref{blow-up2}. 
Recall the following standard formulas for a conformal change
\begin{align}
\begin{split}
		&w^{\frac{n+1}{n-3}}R_{g'_\Sigma}=
        R_{g_\Sigma}-\frac{(n-2)(n+1)}{n-3}\frac{\Delta_{g_\Sigma} w}{w}-\frac{(n+1)(n-2)}{4}{|\nabla_\Sigma \log w|_{g_\Sigma}^2},\\
		&w^{\frac{n+1}{n-3}}\Delta_{g'_\Sigma}\log\rho' =\Delta_{g_\Sigma} \log\rho'+\frac{n+1}{2} g_\Sigma(\nabla_\Sigma \log w, \nabla_\Sigma \log\rho').
\end{split}
\end{align}
Therefore by setting
\begin{equation}
Q'=\frac{1}{2}w^{-\frac{n+1}{n-3}}Q_1,
\end{equation}
a calculation combined with \eqref{Q-1-term} shows that  
\begin{align}\label{a0fhq-09hj-}
    &R_{g'_\Sigma}-2\Delta_{g'_\Sigma}\log\rho'-\left(\frac{n-1}{n}+\varepsilon\right)|\nabla_\Sigma \log\rho'|_{g'_\Sigma}^2-2Q' \nonumber\\
    &=w^{-\frac{n+1}{n-3}}\left(R_{g_{\Sigma}}-2\Delta_{g_\Sigma} \log \rho -\left(\frac{n-1}{n}+\varepsilon\right)|\nabla_\Sigma \log\rho|_{g_\Sigma}^2\right)\\
    &-w^{-\frac{n+1}{n-3}}\left(Q_1+\frac{n+1}{n-3}\left(\frac{\Delta_{g_\Sigma}w}{w}+(n-3)(n^{-1}-\varepsilon) g_{\Sigma} (\nabla_{\Sigma}\log \rho, \nabla_\Sigma\log w)\right)\right)\nonumber\\
    &+\left(\frac{n+1}{4n}-\frac{(n+1)^2}{4}\varepsilon\right)|\nabla_\Sigma \log w|_{g_\Sigma}^2 w^{-\frac{n+1}{n-3}}\nonumber\\    
    &\geq w^{-\frac{n+1}{n-3}}\left(R_{g_{\Sigma}}-2\Delta_{g_\Sigma} \log \rho -\left(\frac{n-1}{n}+\varepsilon\right)|\nabla_\Sigma \log\rho|_{g_\Sigma}^2\right)-2w^{-\frac{n+1}{n-3}} Q_1,\nonumber
\end{align}
where we have chosen $\varepsilon'_n >0$ such that 
$\frac{n+1}{4n}-\frac{(n+1)^2}{4}\varepsilon'_n \geq 0$, and have taken $\varepsilon\in (0,\varepsilon'_n)$.
The desired stability inequality now follows by integrating \eqref{a0fhq-09hj-} against test functions, and applying
\eqref{inductive1}.
\end{plainproof}

\section{Weak \texorpdfstring{$n$}{n}-Data Sets and the Generalized PMT}
\label{sec4}

This section isolates the analytic structure needed for the dimension-reduction
argument. Brendle--Wang \cite[Definition 1.3]{BrendleWang2026} introduced \(n\)-data sets 
as the inductive objects in
their dimension-descent proof of the positive mass theorem, and our proof of
the generalized positive mass theorem follows their overall strategy. The
definition used here is a weakened version adapted to the codimension greater than $3-\frac{2}{n}$
singularities in our main theorem. In particular, the weighted stability
inequality below contains a gradient term involving
\(|\nabla\log\rho|^2\) with a coefficient of smaller absolute value than that
appearing in \cite{BrendleWang2026}.

\begin{definition}
Let $(M^n, g, \E)$ be a complete Riemannian manifold of dimension $n\geq 3$ with arbitrary ends, and having a designated
harmonically asymptotically flat end $\E$. 
A \textit{weak $n$-data set} consists of these objects together with a triple $(\rho,Q,\varepsilon)$, where  
$\rho$, $Q$ are smooth functions on $M^n$ and $\varepsilon$ is a constant, all satisfying the following properties.
\begin{enumerate}
\item The functions $\rho$ and $Q$ are globally positive and $\varepsilon\in (0, \frac{1}{n(n+1)})$. 

\item $Q\in C^{0, \alpha}_{-n-q_0}(\E)$ and  $\rho-(1+\beta r^{2-n})\in C^{2, \alpha}_{2-n-q_0}(\E)$ for some $\alpha\in(0,1)$, $q_0>0$, and $\beta\in \mathbb{R}$.
\item For any $c\in \mathbb{R}$ and $f\in C^{\infty}(M^n)$ with the property that $(\supp \,f) \setminus\E$ is precompact, and $f-c\in W^{1,2}_{-\frac{n-2}{2}}(\E)$, it holds that
\begin{align}\label{weak-data-stability}
 &  \int_{M^n} \!\!\!\left(\rho|\nabla f|_{g}^2+\frac{1}{2}\rho\left(\!R_{g}\!-\!2\Delta_{g}\log\rho\!-\!\left(\frac{n}{n+1}+\varepsilon\right)|\nabla\log\rho|_{g}^2\right)\!f^2 \! \right)dV_{g}\nonumber\\
 \geq &\int_{M^n} \rho Qf^2 dV_{g}.
\end{align}
\end{enumerate}
\end{definition}

The next theorem is the main result of this section. It may be considered as an extension of the positive mass theorem, and generalizes \cite[Theorem 1.5]{BrendleWang2026}.

\begin{theorem}\label{general-PMT} 
Let $(M^n, g, \E,\rho,Q,\varepsilon)$ be a weak $n$-data set, then
\begin{equation}\label{aofqhjt-9qy}
m(\E,g)+\left(\tfrac{n-2}{n-1}\right)\beta > 0.
\end{equation}
\end{theorem}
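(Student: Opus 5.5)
The proof will be by induction on $n$, following the dimension-descent scheme of Brendle--Wang \cite[Theorem 1.5]{BrendleWang2026}. At every stage we argue by contradiction using the $\mu$-bubble machinery of Section~\ref{sec3}.

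\emph{Reduction to a negative-mass pointwise setting.} Suppose $m(\E,g)+\tfrac{n-2}{n-1}\beta\le 0$. The first task is to upgrade the integral inequality \eqref{weak-data-stability} to a pointwise one, since Proposition~\ref{weighted-stability-inequality} requires the pointwise positivity \eqref{qequation}.

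\begin{enumerate}
\item Regard the left-hand side of \eqref{weak-data-stability} minus $\int\rho Qf^2$ as the quadratic form of the operator
\[
L_\rho u=-\operatorname{div}(\rho\nabla u)+\tfrac12\rho\bigl(R_g-2\Delta_g\log\rho-(\tfrac{n}{n+1}+\varepsilon)|\nabla\log\rho|^2\bigr)u .
\]
\item Using the coercivity given by the $Q$ term, solve $L_\rho u=\tfrac12\rho Q u$ (or with a slightly smaller positive right-hand side) with $u\to 1$ on $\E$. This is done on an exhaustion, as in Theorem~\ref{positive-solu}. The strong form of the stability, with $f-c\in W^{1,2}_{-\frac{n-2}{2}}$, is exactly what makes the constant test functions admissible, so that $u$ stays positive and tends to $1$.
\item The expansion $u=1+\mathcal{C}r^{2-n}+\dots$ has $\mathcal{C}$ of a definite sign, coming from the integral of the positive $Q$-term.
\item Replace $(g,\rho)$ by a conformal pair. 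Take $\tilde g=(\rho^{k}u^{l})^{4/(n-2)}g$ together with a compensating power for $\tilde\rho$. Choose the exponents so that three things hold: the quantity $\tilde R-2\tilde\Delta\log\tilde\rho-(\frac{n-1}{n}+\varepsilon')|\tilde\nabla\log\tilde\rho|^2$ becomes pointwise positive (the algebra parallels Step 3 of Proposition~\ref{blow-up1}, absorbing cross terms by Cauchy--Schwarz; the room coming from $\varepsilon<\frac1{n(n+1)}$ is what allows $\frac{n}{n+1}$ to be traded for $\frac{n-1}{n}+\varepsilon'$); $\tilde\rho=1$ on $\E$; and the new ADM mass is $m+\tfrac{n-2}{n-1}\beta$ plus a nonpositive multiple of the definite term.
\item For the mass bookkeeping, use that $\rho^{2/(n-1)}g$ has mass $m+\tfrac{n-2}{n-1}\beta$. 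This is consistent with $k=\tfrac{n-2}{2(n-1)}$ and the mass shift $2k\beta$.
\end{enumerate}

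The definite sign of $\mathcal C$, or a small additional perturbation in the spirit of Proposition~\ref{deform-positive-scalar}, then makes the new mass strictly negative.

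\emph{Descent.} Apply Lemma~\ref{potential-existence}, Proposition~\ref{existence-bubble} and Proposition~\ref{weighted-stability-inequality} to $(M^n,\tilde g,\tilde\rho,\tilde Q)$. This produces a $\mu$-bubble $\Sigma^{n-1}$ satisfying \eqref{inductive1}. If $n\ge 8$, Theorem~\ref{blow-up-2rd-GMT} removes $\mathcal S_\Sigma$ and gives a complete metric $g'_\Sigma$ on $\Sigma\setminus\mathcal S_\Sigma$. The data $(g'_\Sigma,\rho',Q')$ satisfy:
\begin{itemize}
\item $\rho'=1$ on $\E_\Sigma$, so $\beta'=0$;
\item $m(\E_\Sigma)=0$;
\item a stability inequality with coefficient $\frac{n-1}{n}+\varepsilon=\frac{n'}{n'+1}+\varepsilon$, where $n'=n-1$.
\end{itemize}
After shrinking $\varepsilon$ below $\frac1{(n-1)n}$, this is a weak $(n-1)$-data set. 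The induction hypothesis then gives $0=m+\tfrac{n-3}{n-2}\beta'>0$, a contradiction.

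\emph{Base case.} At the bottom, $\Sigma$ is a surface with a flat-type end. Taking test functions $f\equiv1$ cut off far out (allowed by the compact-support clause for $n=3$) gives $\int\rho(K-\Delta\log\rho-c|\nabla\log\rho|^2)>0$. Integrating by parts, and dropping the nonnegative gradient contribution after completing the square with $|\nabla\rho|$, yields $\int_\Sigma \rho K>0$ with $\rho\to1$. This contradicts Cohn-Vossen / Gauss--Bonnet on a complete surface with an asymptotically planar end, where $\int K\le 2\pi(\chi-1)\le0$. Handling the weight $\rho$ and the extra complete ends (via the shielding provided by $\Phi$) is a routine adaptation of the $n=3$ argument in \cite{BrendleWang2026}.

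\emph{Main obstacle.} I expect the hardest step to be the first one: converting the integral weighted inequality into a pointwise one while controlling the ADM mass. This requires existence and positivity of $u$ on a manifold with arbitrary complete ends using only the strong stability, together with a sign for its $r^{2-n}$ coefficient. On top of that comes the exponent algebra showing that the coefficient degrades only from $\frac{n}{n+1}$ to $\frac{n-1}{n}+\varepsilon$. I would first attempt this by following Brendle--Wang's treatment of $n$-data sets verbatim and checking where their coefficient enters.
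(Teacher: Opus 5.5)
Your Steps 1--3 reproduce the paper's construction of the auxiliary factor, up to the choice of domain discussed below. The equation $L_\rho u=\tfrac12\rho Qu$ is exactly $\operatorname{div}_g(\rho\nabla v)-\rho\mathcal Q v=0$ from Proposition~\ref{solving-conformal-factor}, and the sign of $\mathcal C$ is \eqref{expression-constant-infinity}.

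\textbf{The gap is Step~4.} Since $\rho$ and $u$ are not constant on $\E$, asking a product of their powers to equal $1$ there forces, in general, $\tilde\rho\equiv1$. Cancelling the Laplacian terms then forces $\tilde g=(\rho u)^{2/(n-1)}g$, i.e.\ $k=l=\frac{n-2}{2(n-1)}$.

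The pointwise algebra and the mass count for this choice are actually fine. With $\hat\rho=\rho u$, Corollary~\ref{corpositive} gives $\hat\rho^{\frac{2}{n-1}}R_{\tilde g}\ge Q+\bigl(\frac{1}{n(n-1)}+\varepsilon\bigr)|\nabla\log\hat\rho|_g^2$, and $m(\E,\tilde g)=m+\frac{n-2}{n-1}(\beta+\mathcal C)<0$.

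What is lost is completeness. Lemma~\ref{potential-existence}, and hence Proposition~\ref{existence-bubble}, cannot do without it: a proper function with $|\nabla\phi|\le1$ exists only on a complete manifold. The definition of a weak data set gives no lower bound for $\rho$ on the auxiliary ends, and the descent itself produces data where this matters:
\begin{itemize}
\item the new weight is $\rho'=w^{-\frac{n+1}{2}}\hat\rho$, which tends to $0$ at $\mathcal S_\Sigma$;
\item the effective metric is $\rho'^{\frac{2}{n-2}}g'_\Sigma=w^{\frac{n+1}{(n-2)(n-3)}}\hat\rho^{\frac{2}{n-2}}g_\Sigma$;
\item since $w\lesssim d^{3-n}$, its length element is $\lesssim d^{-\frac{n+1}{2(n-2)}}$, whose exponent is $<1$ for $n\ge8$.
\end{itemize}
So your effective metric is incomplete exactly where the weight was introduced.

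There is a second problem with Step~2. You cannot solve for $u$ on all of $M^n$ as in Theorem~\ref{positive-solu}: with arbitrary ends there is no global Sobolev inequality and no control on $\mathcal Q$. The paper solves only on the shielded domain $\Omega_\delta$, with $v=0$ on $\partial\Omega_\delta$ (Lemma~\ref{sobolev-weight-operator}). On that domain your $\tilde g$ would also collapse at $\partial\Omega_\delta$.

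\textbf{The missing idea.} Leave $g$ alone and use $\hat\rho=\rho v$ only as the new weight. Corollary~\ref{corpositive} gives the pointwise hypothesis \eqref{qequation} for $\hat\rho$ on $\Omega_\delta$. The generalized mass then enters only through the barrier on $\E$, where $H+\nu(\log\hat\rho)=-(n-1)\mathfrak m\,\eta t\,r^{-n}+\dots$ with $\mathfrak m=m+\frac{n-2}{n-1}(\beta+\mathcal C)<0$. This is Proposition~\ref{existinductivebuble}, which replaces Proposition~\ref{existence-bubble} and does not require the weight to equal $1$ on $\E$. Note that $H+\nu(\log\hat\rho)=\hat\rho^{\frac{1}{n-1}}\tilde H$, where $\tilde H$ is the mean curvature in your metric $\hat\rho^{\frac{2}{n-1}}g$. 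So your mass bookkeeping is the right heuristic, but it may be used only asymptotically on $\E$, not as a global replacement of the data.

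\textbf{Base case.} Separately, your base case is miscomputed. Write $c=\frac23+\varepsilon$. With $f\equiv1$ (cut off), integration by parts gives $\int\rho K+(1-\frac c2)\int\rho|\nabla\log\rho|^2>0$. The gradient term has a positive coefficient on the same side as $\int\rho K$, so it cannot be dropped. Even if you had $\int\rho K>0$, it would not contradict Gauss--Bonnet.

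The paper instead substitutes $f=\hat\rho^{-1/2}\psi$ and absorbs the cross term by Young's inequality, which works because $c>\frac12$. This gives $\int(4|\nabla\psi|^2+K\psi^2)\ge\int\hat Q\psi^2$, and a logarithmic cut-off then yields $\int K>0$.
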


The quantity $m(\E,g)+\left(\tfrac{n-2}{n-1}\right)\beta$
will be referred to as the \textit{generalized mass} associated with the weak \(n\)-data set.
There is a loose analogy with the positive mass theorem involving charge \cite[Theorem 11.9]{BartnikChrusciel2005DiracBoundary}. In the present setting, the coefficient $\beta$ is the monopole coefficient of the weight $\rho$, and it enters the generalized mass in the same spirit that a charge contributes an additional asymptotic invariant in the charged positive mass theorem, with $\rho$ playing the role of potential for a static electric field. The strict positivity in \eqref{aofqhjt-9qy} is a consequence of the global positivity of \(Q\). This
strict positivity is essential for our proof of Theorem \ref{A}, which is given at the end of this section. 
Indeed, Theorem \ref{blow-up-2rd-GMT} produces a weak \(n\)-data set with zero
generalized mass, and the strict positivity obtained here is precisely that which provides the desired
contradiction.

\subsection{Construction of the auxiliary weight factor}
In this subsection we construct a positive auxiliary factor \(v\) which
modifies the weight \(\rho\).  The purpose of this step is to replace
\(\rho\) by the improved weight $\hat\rho:=\rho v$
so that the weak stability inequality yields a pointwise scalar-curvature
inequality for \(\hat\rho\).  This improved weight will provide the
positivity needed for the \(\mu\)-bubble construction in the next subsection.

Let $(M^n, g, \E,\rho,Q,\varepsilon)$ be a weak $n$-data set, and define the smooth function 
\begin{equation}\label{def-K}
    \mathcal{Q}:=\frac{1}{2}\left(R_g-2\Delta_g \log \rho -\left(\frac{n}{n+1}+\varepsilon\right)|\nabla\log \rho|_g^2-Q\right).
\end{equation}
The decay assumptions on $\rho$, $Q$, and the harmonically asymptotically flat end imply that
\begin{equation}\label{decay-K-rho}
     \nabla \log \rho\in C^{1, \alpha}_{1-n}(\E), \qquad \mathcal{Q}\in C^{0,\alpha}_{-n-q_1}(\E),
\end{equation}
where $q_1=\min\{q_0,n-2,\mathring{q}+2-n\}>0$.
Let $\Omega\subset M^n$ be a connected open set that contains $\E$, such that $\Omega\setminus \E$ is precompact. Then from \eqref{decay-K-rho} and the positivity of $\rho$, there is a constant $C_0>0$ for which
\begin{equation}\label{bound-rho-K}
    C_0^{-1}\leq \rho\leq C_0 \quad\text{and}\quad |\mathcal{Q}|\leq C_0 \quad\text{ on }\Omega.
\end{equation}
In this setting, consider the following Dirichlet problem for $v\in C^{\infty}(\Omega)$ with prescribed asymptotics at infinity 
\begin{equation}\label{conform-weak-stability}
    \operatorname{div}_g(\rho\nabla v)-\rho\mathcal{Q}v=0\text{ }\text{ in }\Omega, \text{ }\text{ }\quad v=0 \text{ }\text{ on }\partial\Omega,  \text{ }\text{ } \quad  v(x)\rightarrow 1 \text{ }\text{ as } x\rightarrow \infty.
\end{equation} 
To study this problem we introduce the bilinear form $\mathcal{B}_{\Omega}:W^{1,2}_0(\Omega)\times W^{1,2}_0(\Omega)\rightarrow \mathbb{R}$ given by
\begin{equation}\label{weighted--energy}
 \mathcal{B}_{\Omega}(u, w)=\int_{\Omega} \left(\rho\langle\nabla u, \nabla w\rangle_g+\rho \mathcal{Q}uw\right)dV_g.
\end{equation} 
Although the potential term \(\mathcal{Q}\) need not be nonnegative, the weak
\(n\)-data inequality implies that \(\mathcal B_\Omega\) is coercive in the
Sobolev sense. 

\begin{lemma}[Coercive Sobolev inequality]
\label{sobolev-weight-operator}
There exists a constant \(C_\Omega>0\), depending on \(\Omega\) and the weak
\(n\)-data, such that for every \(u\in W^{1,2}_0(\Omega)\),
\begin{equation}
        \left(
        \int_\Omega |u|^{\frac{2n}{n-2}}\,dV_g
        \right)^{\frac{n-2}{n}}
        \leq
        C_\Omega
        \int_\Omega
        \left(
        \rho|\nabla u|_g^2+\rho\mathcal{Q} u^2
        \right)dV_g .
\end{equation}
\end{lemma}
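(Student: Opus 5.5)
The plan is to extract coercivity directly from the weak \(n\)-data inequality \eqref{weak-data-stability} and combine it with a Sobolev inequality on \(\Omega\). It suffices to prove the estimate for \(u\in C^\infty_c(\Omega)\); the general case \(u\in W^{1,2}_0(\Omega)\) then follows by density, since both sides are continuous in \(W^{1,2}\) thanks to \eqref{bound-rho-K}.

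\emph{Step 1: positivity of \(\mathcal B_\Omega\).} For \(u\in C^\infty_c(\Omega)\), extend \(u\) by zero to \(M^n\). Then \(u\in C^\infty(M^n)\), \((\supp u)\setminus\E\) is precompact, and \(u-0\) has compact support, so \(u\) is admissible in \eqref{weak-data-stability} with \(c=0\). The potential term there equals
\begin{equation*}
\tfrac12\rho\bigl(R_g-2\Delta_g\log\rho-(\tfrac{n}{n+1}+\varepsilon)|\nabla\log\rho|_g^2\bigr)=\rho\mathcal Q+\tfrac12\rho Q
\end{equation*}
by \eqref{def-K}. Hence \eqref{weak-data-stability} becomes
\begin{equation*}
\mathcal B_\Omega(u,u)\geq \tfrac12\int_\Omega \rho Q u^2\,dV_g .
\end{equation*}

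\emph{Step 2: Sobolev inequality on \(\Omega\) and splitting.} Because \(\Omega\setminus\E\) is precompact and \(\E\) is asymptotically flat, the argument of Lemma \ref{sobolev} gives
\begin{equation*}
\|u\|_{L^{2n/(n-2)}(\Omega)}^2\leq C_S\int_\Omega|\nabla u|_g^2\,dV_g \qquad\text{for } u\in C^\infty_c(\Omega).
\end{equation*}
This is the Euclidean-type Sobolev inequality on the end patched with the compact part, as in \cite{SchoenYau1979PMT}; one may also double or cap off \(\Omega\) to reduce to a complete manifold with a single AF end.

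Fix a large coordinate radius \(r_1\) and set \(K:=\Omega\cap\{r\leq r_1\}\), which is precompact. Since \(\mathcal Q\in C^{0,\alpha}_{-n-q_1}(\E)\), we have
\begin{equation*}
\eta(r_1):=\|\mathcal Q\|_{L^{n/2}(\Omega\setminus K)}\longrightarrow 0 \qquad\text{as } r_1\to\infty .
\end{equation*}
On \(K\), positivity and continuity of \(Q\) give \(Q\geq q_K>0\). For \(\theta\in(0,1)\) I write \(\mathcal B_\Omega=\theta\mathcal B_\Omega+(1-\theta)\mathcal B_\Omega\) and estimate each piece using \eqref{bound-rho-K}, Hölder, and the Sobolev inequality:
\begin{equation*}
\mathcal B_\Omega(u,u)\geq \theta\!\int_\Omega\!\rho|\nabla u|^2-\theta C_0^2\!\int_K\! u^2-\theta C_0\,\eta(r_1)\,C_S\!\int_\Omega\!|\nabla u|^2+\tfrac{1-\theta}{2}C_0^{-1}q_K\!\int_K\! u^2 .
\end{equation*}

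\emph{Step 3: choice of constants.} First choose \(r_1\) so large that \(C_0\eta(r_1)C_S\leq \tfrac12 C_0^{-1}\). Then choose \(\theta\) small so that
\begin{equation*}
\theta C_0^2\leq \tfrac{1-\theta}{2}C_0^{-1}q_K .
\end{equation*}
With these choices,
\begin{equation*}
\mathcal B_\Omega(u,u)\geq \tfrac{\theta}{2}C_0^{-1}\int_\Omega|\nabla u|_g^2\,dV_g\geq \tfrac{\theta}{2C_0C_S}\,\|u\|_{L^{2n/(n-2)}}^2 ,
\end{equation*}
which is the claim with \(C_\Omega=2C_0C_S/\theta\).

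The main obstacle is the noncompact end. There \(Q\) decays, so the positive term \(\tfrac12\int\rho Qu^2\) cannot absorb the negative part of \(\mathcal Q\) by itself. This is exactly why I control the tail using the \(L^{n/2}\)-smallness of \(\mathcal Q\) coming from its decay rate together with the Sobolev inequality. The one point needing care is that the Sobolev inequality holds on \(\Omega\) with a constant independent of \(r_1\); since it only uses \(W^{1,2}_0(\Omega)\subset W^{1,2}_0\) of a fixed AF manifold, this is standard.
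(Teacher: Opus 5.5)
Your proof is correct and follows essentially the same route as the paper. The weak $n$-data inequality gives $\mathcal{B}_\Omega(u,u)\geq\frac12\int_\Omega\rho Qu^2\,dV_g$; the far part of the end is absorbed using the $L^{n/2}$-smallness of $\rho\mathcal{Q}$ together with the Sobolev inequality on $\Omega$; and the precompact part is controlled by the positive lower bound of $\rho Q$ there. The only difference is packaging: the paper argues by contradiction with a normalized sequence $u_i$, while your splitting $\theta\mathcal{B}_\Omega+(1-\theta)\mathcal{B}_\Omega$ makes the argument direct and gives an explicit constant $C_\Omega$.
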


\begin{proof}
Suppose by way of contradiction that there is a sequence of $\{u_i\}^\infty_{i=1}\subset C^\infty_c(\Omega)$ with 
\begin{equation}\label{contrary-poincare}
\mathcal{B}_\Omega(u_i, u_i)\rightarrow 0, \qquad \quad \int_\Omega |u_i|^{\frac{2n}{n-2}}dV_g=1.
\end{equation}
Then according to \eqref{weak-data-stability} we have
\begin{equation}\label{weight-L2-convergence}
   \frac{1}{2}\int_\Omega \rho Qu_i^2 dV_g\leq  \mathcal{B}_\Omega(u_i, u_i)\rightarrow 0.
\end{equation}
Recall that the Sobolev inequality holds on asymptotically flat manifolds $(\Omega, g|_{\Omega})$ with boundary,
see \cite[Lemma 3.1]{SchoenYau1979PMT} for $n=3$ and note that the
same proof holds in all dimensions.
Thus, there is a constant $C_* >0$ such that 
\begin{equation}\label{stand-poincare-type}
  \left(\int_\Omega |u_i|^{\frac{2n}{n-2}}dV_g\right)^{\frac{n-2}{n}}  
  \leq C_* \int_\Omega |\nabla u_i|_g^2 dV_g.
\end{equation}

Next, observe that $\rho \mathcal{Q}\in L^{\frac{n}{2}}(\Omega)$ by \eqref{decay-K-rho}. Hence, there is a large constant $r_0>0$ such that 
\begin{equation}\label{small-term-infty}
\left(\int_{\E_{r_0}}|\rho \mathcal{Q}|^{\frac{n}{2}}dV_g \right)^\frac{2}{n}\leq\frac{1}{2C_*C_{0}},
\end{equation}
where in the Cartesian coordinates of the end, $\E_{r_0}:=\{x\in\E : |x|\geq r_0\}$. We will decompose $\Omega$ into two parts, namely $\E_{r_0}$ and $\Omega\setminus \E_{r_0}$. 
Since $\Omega\setminus \E_{r_0}$ is precompact and $\rho Q$ is positive, there is a constant $C_{1}>0$ such that $\rho Q\geq C_{1}$ on this domain. It follows from \eqref{weight-L2-convergence} that
\begin{equation}\label{cong-compact-set}
    \frac{C_1}{2} \int_{\Omega\setminus \E_{r_0}} u^2_i dV_g\leq \frac{1}{2}\int_{\Omega}\rho Q u_i^2 dV_g\leq \mathcal{B}_\Omega(u_i, u_i)\rightarrow 0,
\end{equation}    
and with the help of \eqref{bound-rho-K} this implies
\begin{equation}\label{fj-9jq09yhh}
\int_{\Omega\setminus\E_{r_0}}|\rho \mathcal{Q}|u^2_i dV_g\rightarrow 0.
\end{equation}
By utilizing again \eqref{bound-rho-K}, as well as \eqref{stand-poincare-type} and \eqref{small-term-infty} we obtain
\begin{align}
    \mathcal{B}_\Omega(u_i, u_i)&\geq \int_{\Omega} \rho|\nabla u_i|_g^2 dV_g+\int_{\E_{r_0}} \rho \mathcal{Q}
    u^2_i dV_g-\int_{\Omega\setminus \E_{r_0}}|\rho \mathcal{Q}|u^2_i dV_g\\
    &\geq \!\frac{1}{C_0}\!\int_\Omega\! |\nabla u_i|_g^2 dV_g-\|\rho \mathcal{Q}\|_{L^\frac{n}{2}(\E_{r_0})}\left(\int_\Omega \!|u_i|^{\frac{2n}{n-2}}dV_g \right)^{\frac{n-2}{n}}\!\!\!\!-\!\int_{\Omega\setminus \E_{r_0}}\!\!\!\!\!\!|\rho \mathcal{Q}|u^2_i dV_g \nonumber\\
    &\geq \frac{1}{2C_*C_{0}}\left(\int_\Omega |u_i|^{\frac{2n}{n-2}}dV_g \right)^{\frac{n-2}{n}}-\int_{\Omega\setminus \E_{r_0}}|\rho \mathcal{Q}|u^2_i dV_g .\nonumber
\end{align}
Combing this with \eqref{fj-9jq09yhh} produces
\begin{equation}
     \int_\Omega |u_i|^{\frac{2n}{n-2}}dV_g\rightarrow 0,
\end{equation} 
which yields a contradiction with \eqref{contrary-poincare}.
\end{proof}

\begin{proposition}\label{solving-conformal-factor}
Let $(M^n,g,\E,\rho,Q,\varepsilon)$ be a weak $n$-data set.
Then there exists a solution $v\in C^{\infty}(\overline{\Omega})$ of \eqref{conform-weak-stability}, which is positive in $\Omega$, such that 
\begin{equation}\label{decay-rate-conf-weig-solu}
v-\left(1+\frac{\mathcal{C}}{r^{n-2}}\right)\in C^{2, \alpha}_{2-n-q'_1}(\E),
\end{equation} 
where $q'_1=\min\{1, q_1\}$ and
\begin{equation}\label{expression-constant-infinity}
  (n-2)\omega_{n-1}  \mathcal{C}=-\mathcal{B}_\Omega(v, v)\leq- \frac{1}{2}\int_{M^n} \rho Q v^2 dV_g<0. 
\end{equation}
\end{proposition}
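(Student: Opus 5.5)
The plan is to copy the proof of Theorem~\ref{positive-solu}, with the Sobolev coercivity now supplied by Lemma~\ref{sobolev-weight-operator} in place of the smallness of $\mathbf{c}_-$. Write $v=1+w$ and take an exhaustion $\Omega_i$ of $\Omega$ by precompact domains whose outer boundary is a coordinate sphere $S_{r_i}\subset\E$. On each $\Omega_i$ the boundary data are $w=-1$ on $\partial\Omega$ and $w=0$ on $S_{r_i}$. To reduce to homogeneous data, fix a smooth function $\chi$ on $\Omega$ with $\chi=1$ near $\partial\Omega$ and $\chi=0$ on $\E$, and set $v=1-\chi+\tilde w$. Then $\tilde w$ solves
\begin{equation*}
\operatorname{div}_g(\rho\nabla\tilde w)-\rho\mathcal{Q}\tilde w=F,\qquad F:=\operatorname{div}_g(\rho\nabla\chi)+\rho\mathcal{Q}(1-\chi),
\end{equation*}
with zero Dirichlet data. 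Here $F\in L^{2n/(n+2)}(\Omega)$, since $\mathcal{Q}=O(r^{-n-q_1})$ on the end.

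By Lemma~\ref{sobolev-weight-operator}, $\mathcal{B}_{\Omega}$ is coercive on $W^{1,2}_0(\Omega_i)$ with a constant independent of $i$. To see this, extend by zero into $\Omega$. The bilinear form also controls $\int\rho|\nabla u|^2$: one argues as in the proof of that lemma, using the split of $\mathcal{Q}$ near infinity together with the $L^2$ control on compact sets coming from \eqref{weak-data-stability}. Lax--Milgram then gives $\tilde w_i$, with uniform bounds on $\|\tilde w_i\|_{L^{2n/(n-2)}}$ and on $\|\nabla\tilde w_i\|_{L^2}$ in terms of $\|F\|_{L^{2n/(n+2)}}$. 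Moser iteration and Schauder estimates, all on smooth data now, give uniform $C^{2,\alpha}$ bounds up to $\partial\Omega$. A diagonal subsequence then converges to a solution $v\in C^\infty(\overline\Omega)$ with $v=0$ on $\partial\Omega$. The asymptotic expansion \eqref{decay-rate-conf-weig-solu} follows as before from \cite[Corollary A.38]{Lee}, because $\rho\to1$ with $\nabla\log\rho\in C^{1,\alpha}_{1-n}$ and $\mathcal{Q}\in C^{0,\alpha}_{-n-q_1}$; rewriting the operator as $\Delta_g v+\nabla\log\rho\cdot\nabla v-\mathcal{Q}v$ puts it in that setting.

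For positivity, test the equation with $v_-=\min\{v,0\}\in W^{1,2}_0(\Omega_i)$. This gives $\mathcal{B}_\Omega(v_-,v_-)=0$, so $v_-=0$ by coercivity. Hence $v\ge0$, and the strong maximum principle / Harnack inequality give $v>0$ in $\Omega$.

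For the formula for $\mathcal{C}$, multiply the equation by $v\phi_r$, with $\phi_r$ the cutoff from Theorem~\ref{positive-solu}, and integrate by parts. The boundary term on $\partial\Omega$ vanishes because $v=0$ there, and this yields
\begin{equation*}
\int_\Omega\bigl(\rho|\nabla v|^2+\rho\mathcal{Q}v^2\bigr)\phi_r\,dV_g=-\int\rho v\langle\nabla v,\nabla\phi_r\rangle_g\,dV_g .
\end{equation*}
The right-hand side tends to the flux $\int_{S_r}\rho v\,\partial_\nu v=-(n-2)\omega_{n-1}\mathcal{C}+o(1)$, while the left-hand side tends to $\mathcal{B}_\Omega(v,v)$. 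This uses $|\nabla v|^2=O(r^{2-2n})$ and $\mathcal{Q}v^2\in L^1$, so both terms are integrable. The result is $(n-2)\omega_{n-1}\mathcal{C}=-\mathcal{B}_\Omega(v,v)$.

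The remaining inequality comes from \eqref{weak-data-stability} applied with $c=1$ and $f=v$ extended by $0$ outside $\Omega$. This extension is only Lipschitz across $\partial\Omega$, so we approximate it by smooth functions; the tail condition $v-1\in W^{1,2}_{-(n-2)/2}(\E)$ holds by the expansion. By the definition \eqref{def-K} of $\mathcal{Q}$, this gives $\mathcal{B}_\Omega(v,v)\ge\frac12\int\rho Qv^2>0$.

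The main obstacle is the uniform coercivity for functions that do not vanish at infinity. Lemma~\ref{sobolev-weight-operator} only covers $W^{1,2}_0$, whereas $v\to1$. This is handled by working with $\tilde w$ (or $w=v-1$ near infinity) on the bounded domains $\Omega_i$. One then needs the uniform gradient bound independent of $i$, and this requires the refined coercivity described above rather than the Sobolev-type statement alone.
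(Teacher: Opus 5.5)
Your proposal is correct and follows essentially the paper's route: you reduce to zero Dirichlet data on the exhaustion via a fixed profile ($1-\chi$ for you, $v_0$ in the paper), obtain uniform $L^{2n/(n-2)}$ bounds from Lemma~\ref{sobolev-weight-operator}, pass to the limit as in Theorem~\ref{positive-solu}, read off the expansion from \cite[Corollary A.38]{Lee}, and get the sign of $\mathcal{C}$ from \eqref{weak-data-stability} with $c=1$ and $f=v$ extended by zero, while giving more detail than the paper on positivity and the flux computation. The obstacle you flag at the end is milder than you suggest: $\int\rho|\nabla u|_g^2\,dV_g\le C\,\mathcal{B}_\Omega(u,u)$ follows at once from $\rho\mathcal{Q}\in L^{n/2}$ together with the lemma, and the limiting argument only needs the $L^{2n/(n-2)}$ bound anyway.
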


\begin{proof}
Let $\{\Omega_i\}_{i=1}^{\infty}$ be an exhaustion of $\Omega$ by precompact connected open sets whose closure contains $\partial\Omega$, and such that $\partial \Omega_i \setminus\partial\Omega \subset \mathcal{E}$ is a coordinate sphere. 
For convenience, set
\begin{equation}
    L:=\Delta_g +\langle\nabla \log\rho, \nabla (\cdot)\,\rangle_g-\mathcal{Q}.
\end{equation}
Then for each $i$, the kernel of $L$ in $W^{1,2}_0(\Omega_i)$ is trivial. To see this, observe that if a function $w\in\ker{L}\cap W^{1,2}_0(\Omega_i)$ then with the help of \eqref{weak-data-stability} we have
\begin{equation}
    0=-\int_{\Omega_i}\rho w L(w)dV_g=\mathcal{B}_{\Omega_i}(w, w)\geq \frac{1}{2}\int_{\Omega_i} \rho Qw^2 dV_g .
\end{equation}
Hence $w=0$, since both $\rho$ and $Q$ are positive.

Choose a function $v_0\in C^{\infty}(\overline{\Omega})$ such that 
\begin{equation}
    v_0=1 \ \text{ in }  \E \quad \quad \text{and} \quad \quad v_0=0 \ \text{ on } \partial \Omega.
\end{equation}
It follows by elliptic theory that there is then a unique solution $w_i \in C^{\infty}(\overline{\Omega}_i)$ to the Dirichlet problem
\begin{equation}\label{dir-problem-L}
    L(w_i)=-L(v_0) \,\text{ in } \Omega_i, \quad \quad\quad  w_i=0 \,\text{ on } \partial \Omega_i. 
\end{equation} 
Multiplying this equation by $\rho w_i$ and integrating by parts produces
\begin{equation}
 \mathcal{B}_{\Omega_i} (w_i, w_i)=\int_{\Omega_i}\rho w_i L(v_0) dV_g\leq \|\rho L(v_0)\|_{L^{\frac{2n}{n+2}}(\Omega)}\|w_i \|_{L^{\frac{2n}{n-2}}(\Omega_i)}.
\end{equation}
Lemma \ref{sobolev-weight-operator} then implies that 
\begin{equation}\label{voanfiaq9hh}
    \|w_i \|_{L^{\frac{2n}{n-2}}(\Omega_i)}\leq C_\Omega \|\rho L(v_0) \|_{L^{\frac{2n}{n+2}}}(\Omega).
\end{equation}
Note that since $v_0=1$ in $\E$ we find that $\rho L(v_0)=-\rho\mathcal{Q}\in C^{0, \alpha}_{-n-q_1}(\E)$ by \eqref{decay-K-rho}, which implies that the right-hand side of \eqref{voanfiaq9hh} is finite.

Using the same convergence argument as in the proof of Theorem \ref{positive-solu} shows that after passing to a subsequence, $\{w_i\}$ converges to $w\in C^{\infty}(\overline{\Omega})$ with $w|_{\partial \Omega}=0$. The decay of the coefficients of $L$ as recorded in \eqref{decay-K-rho}, together with the basic asymptotics analysis of \cite[Corollary A.38]{Lee} implies that $w-\frac{\mathcal{C}}{r^{n-2}}\in C^{2,\alpha}_{2-n-q_1'}(\E)$, for some constant $\mathcal{C}$ where $q'_1=\min\{1, q_1\}$. By setting $v=w+v_0$, we find that this function satisfies equations \eqref{conform-weak-stability} and \eqref{decay-rate-conf-weig-solu}. Moreover, the monopole estimate \eqref{expression-constant-infinity} follows from the stability inequality \eqref{weak-data-stability}, and the positivity of $v$ in $\Omega$ arises from a similar argument as in the proof of Theorem \ref{positive-solu}.
\end{proof}

\begin{corollary}\label{corpositive}
Let $v\in C^{\infty}(\overline{\Omega})$ be the function given by Proposition \ref{solving-conformal-factor}, and set $\hat{\rho}=\rho v$.  If $\varepsilon\in(0,\frac{2n+1}{n(n+1)})$ then it holds that  
\begin{equation}\label{pre-inequality-scalar}
    R_g-2\Delta_g \log\hat\rho -\left(\frac{n-1}{n}+\varepsilon\right)|\nabla\log\hat\rho|_g^2 \geq Q\quad\text{ on }\Omega.
\end{equation}
    
\end{corollary}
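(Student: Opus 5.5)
The plan is a direct pointwise computation in $\Omega$ that uses only the equation satisfied by $v$ and the definition \eqref{def-K} of $\mathcal{Q}$. Since $v>0$ in $\Omega$ by Proposition~\ref{solving-conformal-factor}, the function $\log\hat\rho=\log\rho+\log v$ is smooth there. Write
\begin{equation*}
A:=\nabla\log\rho,\qquad B:=\nabla\log v,\qquad \mu:=\tfrac{n-1}{n}+\varepsilon .
\end{equation*}

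\emph{Step 1: the equation for $\log v$.} Expanding $\operatorname{div}_g(\rho\nabla v)=\rho\mathcal{Q}v$ gives $\Delta_g v+\langle A,\nabla v\rangle_g=\mathcal{Q}v$. Dividing by $v$ and using $\Delta_g\log v=\Delta_g v/v-|B|_g^2$ yields
\begin{equation*}
\Delta_g\log v=\mathcal{Q}-\langle A,B\rangle_g-|B|_g^2 .
\end{equation*}

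\emph{Step 2: substitute into the left-hand side of \eqref{pre-inequality-scalar}.} By \eqref{def-K},
\begin{equation*}
R_g-2\Delta_g\log\rho=2\mathcal{Q}+Q+\left(\tfrac{n}{n+1}+\varepsilon\right)|A|_g^2 .
\end{equation*}
Hence the left-hand side of \eqref{pre-inequality-scalar} equals
\begin{equation*}
Q+\left(\tfrac{n}{n+1}+\varepsilon\right)|A|^2+2\langle A,B\rangle+2|B|^2-\mu\left(|A|^2+2\langle A,B\rangle+|B|^2\right).
\end{equation*}
The two copies of $\mathcal{Q}$ cancel, and so does the $\varepsilon|A|^2$ term. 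What remains is $Q$ plus the quadratic form
\begin{equation*}
\tfrac{1}{n(n+1)}|A|^2+2\left(\tfrac1n-\varepsilon\right)\langle A,B\rangle+\left(\tfrac{n+1}{n}-\varepsilon\right)|B|^2 .
\end{equation*}

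\emph{Step 3: nonnegativity of the quadratic form.} This is the only point where the range of $\varepsilon$ enters, so it is the main check. By Cauchy--Schwarz, the form is pointwise nonnegative provided
\begin{equation*}
\left(\tfrac1n-\varepsilon\right)^2\le \tfrac{1}{n(n+1)}\left(\tfrac{n+1}{n}-\varepsilon\right).
\end{equation*}
The $1/n^2$ terms cancel, and this reduces to
\begin{equation*}
\varepsilon^2-\varepsilon\left(\tfrac2n-\tfrac{1}{n(n+1)}\right)\le 0,
\end{equation*}
that is, $0\le\varepsilon\le\tfrac{2n+1}{n(n+1)}$. This holds by hypothesis. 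In addition, the $|B|^2$ coefficient $\tfrac{n+1}{n}-\varepsilon$ is positive in this range. Therefore the quadratic form is nonnegative at every point of $\Omega$, and \eqref{pre-inequality-scalar} follows.

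Note also that the weak $n$-data set assumption $\varepsilon<\tfrac{1}{n(n+1)}$ already lies inside the allowed range, so the corollary applies directly to the data at hand.
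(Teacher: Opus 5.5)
Your proof is correct and follows the same route as the paper: substitute the equation for $v$ and the definition of $\mathcal{Q}$ so that the $\mathcal{Q}$ terms cancel, then check that the remaining quadratic form has nonpositive discriminant. The only difference is cosmetic. You write the form in the variables $(\nabla\log\rho,\nabla\log v)$, whereas the paper uses $(\nabla\log\hat\rho,\nabla\log v)$; these are related by a linear change of variables and give the same range $0<\varepsilon<\frac{2n+1}{n(n+1)}$.
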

\begin{proof}  
Using \eqref{def-K} and \eqref{conform-weak-stability}, and substituting $\log \hat{\rho}=\log \rho+\log v$ we obtain that  
\begin{align}
R_g-2\Delta \log\hat\rho& =2g(\nabla \log \hat\rho, \nabla \log v )+\left(\frac{n}{n+1}+\varepsilon\right)|\nabla\log\hat\rho-\nabla \log v|_g^2+Q \nonumber\\
&=Q+\left(\frac{n}{n+1}+\varepsilon\right)|\nabla \log \hat \rho|_g^2 \\
&\quad+2\left(\frac{1}{n+1}-\varepsilon\right)g(\nabla \log \hat\rho, \nabla \log v)+\left(\frac{n}{n+1}+\varepsilon\right)|\nabla \log v|_g^2 \nonumber\\
&=Q+\left(\frac{n-1}{n}+\varepsilon\right)|\nabla \log\hat\rho|_g^2 +\frac{1}{n(n+1)}|\nabla \log \hat \rho|_g^2 \nonumber\\
&\quad+2\left(\frac{1}{n+1}-\varepsilon\right)g(\nabla \log \hat\rho, \nabla \log v)+\left(\frac{n}{n+1}+\varepsilon\right)|\nabla \log v|_g^2 . \nonumber
\end{align}
The last three terms form a quadratic expression in
$\nabla\log\hat\rho$ and $\nabla\log v$. By the allowable range for
$\varepsilon$, its discriminant is negative
\begin{equation}
\left(\frac1{n+1}-\varepsilon\right)^2-
\frac{1}{n(n+1)}
\left(\frac n{n+1}+\varepsilon\right)<0 .
\end{equation}
The desired inequality \eqref{pre-inequality-scalar} now follows.
\end{proof}

\subsection{Construction of \texorpdfstring{$\mu$}{mu}-bubble}
For each $\delta\in (0, \delta_0)$, Lemma \ref{potential-existence} provides a connected open set $\Omega_\delta$ and a potential function $\hat{\Phi}\in C^{\infty}(\Omega_{\delta})$ with the following properties: 
\begin{equation}
\E\subset \Omega_\delta ,\qquad\quad \Omega_\delta\setminus\E\Subset M^n ,
\end{equation}
and $\hat{\Phi}$ satisfies \eqref{potential-ineq} and \eqref{potential-main-ineq}
on $\Omega_\delta$. By applying Proposition \ref{solving-conformal-factor} on $\Omega_\delta$, we obtain a positive function $v\in C^{\infty}(\Omega_{\delta})$ satisfying \eqref{conform-weak-stability}, together with the asymptotic properties \eqref{decay-rate-conf-weig-solu} and \eqref{expression-constant-infinity}. Define
$\hat{\rho}=\rho v$.

\begin{proposition}[Existence of $\mu$-bubble]\label{existinductivebuble} 
Under the setting and hypotheses of Theorem \ref{general-PMT}, if 
\begin{equation}
m(\E,g)+\left(\tfrac{n-2}{n-1}\right)\beta \leq 0,
\end{equation} 
then there exists a $\mu$-bubble $\Sigma^{n-1}\subset\Omega_{\delta}$ associated with $(\hat{\rho},\hat{\Phi})$, which is smooth outside a closed singular set $\mathcal{S}_{\Sigma}$ residing within a compact subset of $M^n$, such that if it is nonempty
\begin{equation}
\dim_{\mathcal M}\mathcal S_{\Sigma}\le n-8 .
\end{equation}
Moreover, $(\Sigma^{n-1},g_{\Sigma})$ satisfies
\begin{equation}
H_{\Sigma}=\hat \Phi-\nu(\log\hat{\rho})
\end{equation}
away from the singular set where $\nu$ is the unit outer normal, and it
has a single harmonically asymptotically flat end $\mathcal{E}_\Sigma$, with the property that if $n\geq 4$ then the mass vanishes, $m(\E_{\Sigma},g_{\Sigma})=0$.
\end{proposition}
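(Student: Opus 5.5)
The plan is to reduce Proposition~\ref{existinductivebuble} to the $\mu$-bubble construction already carried out for Proposition~\ref{existence-bubble}, with $\rho$ replaced by $\hat\rho=\rho v$ and $\Phi$ by $\hat\Phi$. The inputs of that construction are few: a positive weight equal to a constant-type function on $\E$, a barrier potential vanishing on $\E$ and diverging to $-\infty$ at $\partial\Omega_\delta$, and the asymptotic barrier inequality \eqref{boundary-barrier} on the faces $\mathcal P_r^{\pm\sigma}\cup\mathcal D_{r,\sigma}$. Once these are in place, the truncated free-boundary minimization, the limit $r\to\infty$ via \cite[Theorem 4.5]{LesourdUngerYau2024} and \cite[Lemma A.3]{Eichmair2009PlateauMOTS}, regularity away from a closed set of codimension at least seven, and the Minkowski bound $\dim_{\mathcal M}\mathcal S_\Sigma\le n-8$ from \cite[Theorem~3.34]{BrendleWang2026} go through verbatim. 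The first variation gives $H_\Sigma=\hat\Phi-\nu(\log\hat\rho)$. The graphical asymptotics and the vanishing mass of $\E_\Sigma$ for $n\ge4$ follow as before, because $\hat\rho$ differs from $1$ on $\E$ only by a harmonic-type $r^{2-n}$ term; this perturbs the graph function $z$ only at orders already absorbed into $C^{3,\alpha}_{3-n-\epsilon}$. Precompactness of $\Sigma\setminus\E$ in $\Omega_\delta$ holds because $\hat\Phi\to-\infty$ at $\partial\Omega_\delta$ acts as a barrier.

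There is one real difference from the earlier setting. There, $\rho=1$ on $\E$ and $m<0$, so the barrier inequality \eqref{boundary-barrier} came directly from the mass. Here $\hat\rho$ is not identically $1$ on $\E$. By Definition (2) and \eqref{decay-rate-conf-weig-solu},
\begin{equation*}
\hat\rho=\rho v=1+(\beta+\mathcal C)r^{2-n}+O(r^{2-n-\min\{q_0,q_1'\}}).
\end{equation*}
The key step is therefore to verify \eqref{boundary-barrier} in the form $H+\nu(\log\hat\rho)-\hat\Phi>0$ on the boundary of the large cylinders, with $\hat\Phi=0$ on $\E$. For harmonic asymptotics $g=(1+\frac{2m}{(n-2)r^{n-2}})\delta+O(r^{-\mathring q})$, I would compute the mean curvature of the coordinate planes $x^n=\pm\sigma$ and of the side cylinder to leading order. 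The weight then adds $\nu(\log\hat\rho)\approx(2-n)(\beta+\mathcal C)r^{1-n}\,\nu(r)$.

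The combined leading coefficient should be a negative multiple of $m+\frac{n-2}{n-1}(\beta+\mathcal C)$. This matches the weighted-area interpretation, in which $\rho\,dA$ behaves like area for the conformal metric $\rho^{2/(n-1)}g$, whose mass is $m+\frac{n-2}{n-1}\beta$. Since $\mathcal C<0$ by \eqref{expression-constant-infinity}, the hypothesis $m+\frac{n-2}{n-1}\beta\le0$ gives
\begin{equation*}
m+\tfrac{n-2}{n-1}(\beta+\mathcal C)<0
\end{equation*}
strictly. This strict negativity is exactly what makes the standard barrier computation of \cite{LesourdUngerYau2024} work, with the strict sign dominating the $O(r^{1-n-\epsilon})$ errors for $r,\sigma$ large. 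I expect this step to be the main obstacle: the constant $\frac{n-2}{n-1}$ and the signs for the planes versus the side cylinder must be checked carefully. I would do this most cleanly by rewriting the weighted functional as an area functional for $\tilde g=\hat\rho^{2/(n-1)}g$ on $\E$. That metric is harmonically asymptotically flat with mass $m+\frac{n-2}{n-1}(\beta+\mathcal C)<0$, which lets me quote the unweighted barrier estimate directly.

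Finally, I would note that $\hat\rho$ is positive and smooth on $\Omega_\delta$, since $v>0$ there, but it vanishes on $\partial\Omega_\delta$. This causes no trouble, because the bubble stays a positive distance from $\partial\Omega_\delta$ due to $\hat\Phi\to-\infty$. Hence the bounds of Remark~\ref{boundness-mean-cuvrature} hold for $\hat\rho$ along $\Sigma$.
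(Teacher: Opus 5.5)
Your proposal is correct and is essentially the paper's proof. The paper likewise reduces everything to the construction of Proposition~\ref{existence-bubble}, writes $\hat\rho=1+(\beta+\mathcal C)r^{2-n}+\dots$, and uses $\mathcal C<0$ to make $\mathfrak m:=m(\E,g)+\tfrac{n-2}{n-1}(\beta+\mathcal C)$ strictly negative. It then verifies the face barriers by computing $H+\nu(\log\hat\rho)=-(n-1)\mathfrak m\,\eta t\,r^{-n}+\dots$ on the slabs, with the $O(r^{-1})$ mean curvature handling the side cylinder; this is exactly the coefficient your metric $\hat\rho^{2/(n-1)}g$ produces, since it has mass $\mathfrak m$ and mean curvature $\hat\rho^{-1/(n-1)}(H+\nu(\log\hat\rho))$.

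Your extra remark that $\hat\rho$ vanishes on $\partial\Omega_\delta$, which the paper passes over, is indeed harmless. The actual reason is that, with $d$ the distance to $\partial\Omega_\delta$, one has $-\hat\Phi=\frac{2}{\delta^2(L_\delta-\phi)}\ge\frac{2}{\delta^2 d}$ (as $\phi$ is $1$-Lipschitz), and this beats $|\nu(\log\hat\rho)|\approx d^{-1}$ (Hopf) for small $\delta$. The bare fact that $\hat\Phi\to-\infty$ is not enough on its own.
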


\begin{proof}
It is enough to verify the barrier conditions along the boundary of the
truncated regions; the rest of the construction is identical to the proof of
Proposition \ref{existence-bubble}. 
First, observe that using the expansions of $\rho$ and $v$
shows that
\begin{equation}
        \hat{\rho}-\left(1+\hat{\beta} r^{2-n}\right)
        \in C^{2,\alpha}_{2-n-q_1'}(\mathcal E), \quad\quad\quad \hat{\beta}=\beta+\mathcal{C}.
\end{equation}
Therefore, the assumed nonpositivity of the generalized mass, and the fact that $\mathcal{C}<0$ by \eqref{expression-constant-infinity}, gives the strict inequality
\begin{equation}
     \mathfrak m:=   m(\mathcal E,g)+\frac{n-2}{n-1}\hat\beta
        =
        m(\mathcal E,g)+\frac{n-2}{n-1}\beta
        +
        \frac{n-2}{n-1}\mathcal C
        <0 .
\end{equation}

We now verify the boundary barriers. Since \(\hat\Phi=0\) in the prescribed
asymptotically flat end, the \((\hat\rho,\hat\Phi)\)-barrier quantity on a
hypersurface with unit normal \(\nu\) is
\begin{equation}
        H+\nu(\log\hat\rho)-\hat\Phi
        =
        H+\nu(\log\hat\rho) .
\end{equation}
In the harmonically asymptotically flat coordinates, the horizontal slices
\(\{x^n=t\}\) satisfy, with respect to the normal
\(\nu=\eta\,\partial_{x^n}+o(1)\), where \(\eta=\pm1\),
\begin{equation}
        H+\nu(\log\hat\rho)
        =
        -(n-1)\mathfrak m \,\eta t\, r^{-n}
        +O(r^{-n-q_1'}) .
\end{equation}
On the top face \(\mathcal P_r^\sigma=\{x^n=\sigma\}\), the outward normal
of \(\mathcal Z_{r,\sigma}\) is asymptotic to \(+\partial_{x^n}\), so
\(\eta t=\sigma>0\). On the bottom face
\(\mathcal P_r^{-\sigma}=\{x^n=-\sigma\}\), the outward normal is asymptotic
to \(-\partial_{x^n}\), and again \(\eta t=\sigma>0\). Since
\(\mathfrak m<0\), it follows that, after choosing
\(\sigma\) and then \(r\) sufficiently large,
\begin{equation}
        H
        +
        \nu(\log\hat\rho)
        -
        \hat\Phi
        >0
        \quad\text{ on } \mathcal P_r^{\pm\sigma}\cup \mathcal{D}_{r,\sigma},
\end{equation}
where the slow $\frac{1}{r}$-decay of the mean curvature is responsible
for this inequality on the side cylinder $\mathcal{D}_{r,\sigma}$.
\end{proof}

The relevant stability inequality may be established in the same manner as Proposition \ref{weighted-stability-inequality}, and is recorded here. Note that Corollary \ref{corpositive} guarantees that the scalar curvature quantity positivity hypothesis \eqref{qequation}, needed for the proof of Proposition \ref{weighted-stability-inequality}, is satisfied in the current setting.

\begin{proposition}[Stability inequality]
\label{inductive-weigh-strong-stability}
Consider the hypotheses and setting of Proposition \ref{existinductivebuble}, and let $\Sigma^{n-1}\subset\Omega_{\delta}$ be the constructed
$\mu$-bubble associated with $(\hat \rho,\hat \Phi)$. For any 
$c\in \mathbb{R}$ let $f\in C^{\infty}(\Sigma^{n-1}\setminus\mathcal{S}_{\Sigma})$ be a test function such that $f$ vanishes in a neighborhood of $\mathcal{S}_{\Sigma}$, and $f-c\in W^{1,2}_{-\frac{n-3}{2}}(\E_{\Sigma})$ when $n\geq 4$
or $\supp(f-~c)$ is compact in $\overline{\E}_{\Sigma}$ when $n=3$. Then there exists a constant $\delta_{n,\varepsilon} <\delta_0$ depending only on $n$ and $\varepsilon$, and a positive function $\hat{Q}\in C^{\infty}(\Sigma^{n-1}\setminus \mathcal{S}_{\Sigma})\cap C^{0,\alpha}_{-n+1-q_1'}(\E_{\Sigma})$, such that for any $\delta\in (0, \delta_{n,\varepsilon})$ it holds that
\begin{align}\label{inductive123}
 &  \int_{\Sigma^{n-1}} \!\!\left(\hat{\rho}|\nabla_\Sigma f|_{g_{\Sigma}}^2 \!+\!\frac{1}{2}\hat{\rho}\!\left(\! R_{g_{\Sigma}}\!-2\Delta_{g_\Sigma}\log\hat \rho-\left(\!\frac{n-1}{n}+\varepsilon\right)|\nabla_{\Sigma}\log\hat \rho|_{g_{\Sigma}}^2\right)f^2 \right)dV_{g_{\Sigma}}\nonumber\\
 \geq &\int_{\Sigma^{n-1}} \hat \rho \, \hat Qf^2 dV_{g_{\Sigma}}.
\end{align}
\end{proposition}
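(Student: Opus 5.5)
The plan is to repeat the argument of Proposition~\ref{weighted-stability-inequality} verbatim, with $(\rho,\Phi,Q)$ replaced by $(\hat\rho,\hat\Phi,Q)$. The new inputs are Corollary~\ref{corpositive}, which supplies the ambient positivity hypothesis \eqref{qequation}, and the decay information needed to place $\hat Q$ in the weighted Hölder space.

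\emph{Step 1: strong stability.} I would take the strong stability inequality \eqref{mu-stability} for the $\mu$-bubble associated with $(\hat\rho,\hat\Phi)$ from the free-boundary approximation and limiting procedure of Lesourd--Unger--Yau. This is exactly as in the first paragraph of the proof of Proposition~\ref{weighted-stability-inequality}. The only change is that the barriers are now those verified in Proposition~\ref{existinductivebuble}, and the first and second variation formulas are unchanged. The class of admissible test functions is the stated one: $f$ vanishes near $\mathcal S_\Sigma$ and $f-c\in W^{1,2}_{-\frac{n-3}{2}}(\E_\Sigma)$. This gives
\[
\int_{\Sigma}\hat\rho\bigl(|\nabla_\Sigma f|^2-(Ric(\nu,\nu)+|A_\Sigma|^2-\nabla^2\log\hat\rho(\nu,\nu)+\nu(\hat\Phi))f^2\bigr)\geq 0 .
\]

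\emph{Step 2: pointwise algebra.} On the regular part, I would apply the traced Gauss equation and the Laplacian decomposition $\nabla^2\log\hat\rho(\nu,\nu)=\Delta_g\log\hat\rho-\Delta_\Sigma\log\hat\rho-H_\Sigma\nu(\log\hat\rho)$, together with $H_\Sigma=\hat\Phi-\nu(\log\hat\rho)$. I would then use \eqref{pre-inequality-scalar}, namely $R_g-2\Delta_g\log\hat\rho-(\frac{n-1}{n}+\varepsilon)|\nabla\log\hat\rho|^2\geq Q$ on $\Omega_\delta$, which needs $\varepsilon<\frac{1}{n(n+1)}<\frac{2n+1}{n(n+1)}$. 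The resulting quantity splits into the same Parts 1--3 as before.

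Part 2 is nonpositive by \eqref{potential-main-ineq}. One point needs care here: Lemma~\ref{potential-existence} must be applied with respect to a positive function that is at most the $Q$ in \eqref{pre-inequality-scalar}. I would simply apply it with that same $Q$, which is legitimate since $Q>0$. Part 3 is a negative-definite quadratic form in $(\hat\Phi,\nu(\log\hat\rho))$ once $\delta<\delta_{n,\varepsilon}$. Here $\delta_{n,\varepsilon}$ is the same discriminant condition as before, and it depends only on $n$ and $\varepsilon$. Plugging this into Step 1 yields \eqref{inductive123} with $\hat Q:=\frac14 Q|_{\Sigma}$.

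\emph{Step 3: properties of $\hat Q$.} Positivity and smoothness of $\hat Q$ on $\Sigma\setminus\mathcal S_\Sigma$ are immediate. For the decay, note that $Q\in C^{0,\alpha}_{-n-q_0}(\E)$. Restricting to the graphical end $x^n=z(x')$ of $\Sigma$, where $|x|\sim|x'|$ and $z$ has bounded $C^{1,\alpha}$ asymptotics, gives $\hat Q\in C^{0,\alpha}_{-n-q_0}(\E_\Sigma)$. Since $q_1'\le\min\{1,q_0\}$, we have $-n-q_0\le -n+1-q_1'$, so $\hat Q\in C^{0,\alpha}_{-n+1-q_1'}(\E_\Sigma)$.

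I expect the main obstacle to be Step 1. Passing the free-boundary stability to the limit requires controlling the boundary terms on $\mathcal D_{r,\sigma}$ and the tails for test functions that are only asymptotically constant. This in turn needs the decay $\nabla\log\hat\rho=O(r^{1-n})$, which follows from \eqref{decay-rate-conf-weig-solu}. I would argue exactly as cited from \cite[Theorem 4.21]{LesourdUngerYau2024}, checking that the weight $\hat\rho\to1$ with $C^{2,\alpha}_{2-n}$ error does not affect those estimates.
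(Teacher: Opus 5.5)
Your proposal is correct and follows the paper's proof. The paper simply repeats the argument of Proposition~\ref{weighted-stability-inequality} with $(\hat\rho,\hat\Phi)$ in place of $(\rho,\Phi)$, uses Corollary~\ref{corpositive} (applicable since $\varepsilon<\frac{1}{n(n+1)}$) to supply the positivity hypothesis \eqref{qequation} in its inequality form, and takes $\hat Q=\frac14 Q$. Your extra checks fill in details the paper leaves implicit: that $\hat Q$ inherits the decay $C^{0,\alpha}_{-n-q_0}\subset C^{0,\alpha}_{-n+1-q_1'}$ on the graphical end, and that $\hat\Phi$ must be built from that same $Q$.
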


Combined with the blow-up procedure for GMT singularities in Section 3.3, the preceding construction yields a complete harmonically asymptotically Euclidean manifold with arbitrary ends carrying a weak \((n-1)\)-data set. 

\begin{corollary}\label{inductive-step}
Let $(M^n, g, \E,\rho,Q,\varepsilon)$ be a weak $n$-data set with $n\geq 4$. If the generalized mass satisfies
\begin{equation}
m(\E,g)+\left(\tfrac{n-2}{n-1}\right)\beta \leq 0,
\end{equation}
then there exists a weak $(n-1)$-data set $(M^{n-1},g',\E',\rho',Q',\varepsilon')$ with vanishing mass
\begin{equation}
m(\E',g')=0.
\end{equation}
\end{corollary}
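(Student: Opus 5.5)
The proof chains together three results stated above: the existence of a $\mu$-bubble (Proposition~\ref{existinductivebuble}), its stability inequality (Proposition~\ref{inductive-weigh-strong-stability}), and the conformal blow-up of the GMT singular set (Theorem~\ref{blow-up-2rd-GMT}). The remaining work is checking that the output satisfies the definition of a weak $(n-1)$-data set.

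\emph{Producing the bubble.} Since $\varepsilon<\frac{1}{n(n+1)}<\frac{2n+1}{n(n+1)}$, Corollary~\ref{corpositive} applies on $\Omega_\delta$. It gives the pointwise inequality \eqref{pre-inequality-scalar} for $\hat\rho=\rho v$ with the coefficient $\frac{n-1}{n}+\varepsilon$. This is exactly hypothesis \eqref{qequation} with $Q$ replaced by the original positive $Q$. Fix $\delta<\delta_{n,\varepsilon}$. The assumption that the generalized mass is nonpositive lets Proposition~\ref{existinductivebuble} produce a $\mu$-bubble $\Sigma^{n-1}\subset\Omega_\delta$ with the following properties:
\begin{itemize}
\item its singular set $\mathcal S_\Sigma$ is compact with $\dim_{\mathcal M}\mathcal S_\Sigma\le n-8$;
\item it has a single harmonically asymptotically flat end $\E_\Sigma$;
\item $m(\E_\Sigma,g_\Sigma)=0$, because $n\ge4$.
\end{itemize}
Proposition~\ref{inductive-weigh-strong-stability} then yields the weighted stability inequality \eqref{inductive123} with a positive $\hat Q\in C^{0,\alpha}_{-(n-1)-q_1'}(\E_\Sigma)$. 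Its gradient coefficient is $\frac{n-1}{n}+\varepsilon=\frac{(n-1)}{(n-1)+1}+\varepsilon$, which is precisely the form required in dimension $n-1$.

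\emph{Blowing up $\mathcal S_\Sigma$.} Next I apply Theorem~\ref{blow-up-2rd-GMT} with $(\rho,Q_1)$ replaced by $(\hat\rho,\hat Q)$. Its proof (Lemmas~\ref{local-blow-up} and \ref{blow-up-fun-GMT}) uses only two inputs:
\begin{itemize}
\item the stability inequality, now supplied by \eqref{inductive123};
\item the bounds \eqref{rho-bound}--\eqref{mean-curvature-bound} for $\hat\rho$, $|\nabla\log\hat\rho|$ and $H_\Sigma$ on $\Sigma$, which hold because $\Sigma$ lies in $\Omega_\delta$ and $\Omega_\delta\setminus\E$ is precompact, where $v$ and hence $\hat\rho$ is smooth and positive.
\end{itemize}
I take $\varepsilon'=\min\{\varepsilon,\varepsilon'_n/2\}$. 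Decreasing $\varepsilon$ only strengthens the left-hand side of \eqref{inductive123}, since it makes $-\varepsilon|\nabla\log\hat\rho|^2$ less negative, so the inequality continues to hold with $\varepsilon'$. Moreover $\varepsilon'<\frac{1}{n(n+1)}<\frac{1}{(n-1)n}$, so the range condition in the definition of a weak $(n-1)$-data set holds.

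The theorem then gives a complete harmonically asymptotically flat metric $g'$ on $M^{n-1}:=\Sigma^{n-1}\setminus\mathcal S_\Sigma$, together with positive smooth functions $\rho'$ and $Q'$. Along the end we have $\rho'=1$, so $\beta'=0$, and $m(\E',g')=0$. The decay $Q'\in C^{0,\alpha}_{-(n-1)-q'}$ follows from $Q'=\frac12 w^{-\frac{n+1}{n-3}}\hat Q$ with $w=1$ on $\E_\Sigma$. Finally, the test-function class matches. A function $f$ on $M^{n-1}$ with $(\supp f)\setminus\E'$ precompact in $M^{n-1}$ is exactly one that vanishes near $\mathcal S_\Sigma$, and the weights $W^{1,2}_{-\frac{(n-1)-2}{2}}=W^{1,2}_{-\frac{n-3}{2}}$ coincide. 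Hence \eqref{blow-up2} is exactly \eqref{weak-data-stability} in dimension $n-1$.

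\emph{Main obstacle.} The step needing the most care is justifying that Theorem~\ref{blow-up-2rd-GMT}, which is stated in the setting of Proposition~\ref{weighted-stability-inequality}, transfers verbatim to $(\hat\rho,\hat\Phi,\hat Q)$. Two points must be verified. First, the barrier estimate \eqref{3.70} must hold uniformly near $\mathcal S_\Sigma$; this relies only on $|\nabla\log\hat\rho|$ and $|H_\Sigma|$ being bounded there, and that boundedness comes from $\hat\Phi$ being bounded on $\Sigma$ and $v$ being positive and smooth on $\overline{\Omega_\delta}\setminus\partial\Omega_\delta$. Second, the regularity of $\hat Q$ must be such that $Q'$ is smooth and positive on all of $M^{n-1}$. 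For $4\le n\le7$ the singular set is empty and no blow-up is needed, since then $g'=g_\Sigma$ and $\rho'=\hat\rho$.
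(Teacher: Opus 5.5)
Your route is the paper's. You take the $\mu$-bubble of Proposition~\ref{existinductivebuble} with the stability inequality of Proposition~\ref{inductive-weigh-strong-stability}, stop there when $4\le n\le 7$, and for $n\ge 8$ blow up $\mathcal S_\Sigma$ via $g'=w^{\frac{n+1}{n-3}}g_\Sigma$, $\rho'=w^{-\frac{n+1}{2}}\hat\rho$, $Q'=\frac12 w^{-\frac{n+1}{n-3}}\hat Q$. Your reduction to $\varepsilon'=\min\{\varepsilon,\varepsilon'_n/2\}$ is harmless but unnecessary. One may take $\varepsilon'_n=\frac{1}{n(n+1)}$, and $\varepsilon<\frac{1}{n(n+1)}$ already holds, so the paper simply keeps $\varepsilon'=\varepsilon$.

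There is, however, a false step in your check of condition (2) of the definition. You assert that $\rho'=1$ along the end, hence $\beta'=0$. That conclusion of Theorem~\ref{blow-up-2rd-GMT} comes from the hypothesis $\rho=1$ on $\mathcal E$ in Proposition~\ref{existence-bubble}. This hypothesis is not among the inputs you listed, and it fails for $\hat\rho=\rho v$. By Proposition~\ref{solving-conformal-factor}, $v=1+\mathcal C r^{2-n}+\dots$ with $\mathcal C<0$ strictly. Hence $\hat\rho=1+(\beta+\mathcal C)r^{2-n}+O_{2,\alpha}(r^{2-n-q_1'})$. Since $w=1$ on $\mathcal E_\Sigma$, you get $\rho'=\hat\rho$ there, which in general is not identically $1$. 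In the case $4\le n\le 7$, where you set $\rho'=\hat\rho$, condition (2) is never checked at all.

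The correct reason that $\beta'=0$ is the dimension shift. On the $(n-1)$-dimensional end the monopole order is $r^{2-(n-1)}=r^{3-n}$, while $\hat\rho-1=O_{2,\alpha}(r^{2-n})$ decays one order faster (restrict along the graphical end of $\Sigma$, where $|x|\sim|x'|$). Since $q_1'\le 1$, this gives $\rho'-1\in C^{2,\alpha}_{3-n-q_1'}(\mathcal E_\Sigma)$, which is condition (2) with $\beta'=0$ and $q_0'=q_1'$. This is exactly what the paper records. It is why the generalized mass of the new data set vanishes even though $\hat\beta=\beta+\mathcal C\neq 0$ in general. With this replacement your argument is complete.
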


\begin{proof} 
Let $(\Sigma^{n-1},g_{\Sigma},\E_{\Sigma},\hat{\rho},\hat{Q},\varepsilon)$ be as in Proposition \ref{inductive-weigh-strong-stability}. Note that here $\hat{\rho}$ is restricted to $\Sigma^{n-1}$, and satisfies $\hat{\rho}-1 \in C^{2,\alpha}_{3-n-q_1'}(\E_\Sigma)$. According to Proposition \ref{existinductivebuble} the mass is zero $m(\E_{\Sigma},g_{\Sigma})=0$, and for $\delta\in(0,\delta_{n,\varepsilon})$ it admits the stability inequality \eqref{inductive123}. If $n<8$ then the singular set is empty $\mathcal{S}_{\Sigma}=\emptyset$, and thus this forms the desired weak $(n-1)$-data set. 

Consider now the case when $n\geq 8$. The GMT singular set may be blown-up using the construction of Section \ref{secondbu}.
In particular let $w\in C^{\infty}(\Sigma^{n-1}\setminus\mathcal{S}_{\Sigma})$, with $w=1$ on $\E_\Sigma$, be the function
constructed in Lemma \ref{blow-up-fun-GMT}, and set
\begin{equation}
g'=w^{\frac{n+1}{n-3}}g_\Sigma, \quad \quad\quad \rho'=w^{-\frac{n+1}{2}}\hat{\rho},\quad\quad\quad 
Q'=\frac{1}{2}w^{-\frac{n+1}{n-3}}\hat{Q}.
\end{equation}
By Theorem \ref{blow-up-2rd-GMT}, the blown-up manifold $(\Sigma^{n-1}\setminus\mathcal{S}_\Sigma,g')$ is complete and harmonically asymptotically flat with vanishing mass $m(\E_\Sigma,g')=0$. Moreover, it satisfies the stability inequality 
\eqref{blow-up2} if $\varepsilon\in(0,\varepsilon'_n)$, where we may take $\varepsilon'_n =\frac{1}{n(n+1)}$. Note also
that $Q'\in C^{0, \alpha}_{1-n-q'_1}(\E_\Sigma)$ and $\rho'-1\in C^{2, \alpha}_{3-n-q'_1}(\E_\Sigma)$. Therefore,
setting $M^{n-1}=\Sigma^{n-1}\setminus\mathcal{S}_{\Sigma}$, $\E'=\E_\Sigma$, and $\varepsilon'=\varepsilon$ yields
the desired weak $(n-1)$-data set.
\end{proof}

\subsection{Proof of Theorem \ref{general-PMT}}
We will first establish Theorem \ref{general-PMT} in the 3-dimensional case. For general $n$, we will argue by contradiction and use Corollary \ref{inductive-step} inductively to reduce the problem to dimension three.

\begin{lemma}\label{3-dim-general-PMT} Theorem \ref{general-PMT} holds for $n=3$. 
\end{lemma}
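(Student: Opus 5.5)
We argue by contradiction: assume $m(\E,g)+\tfrac12\beta\le 0$ for a weak $3$-data set $(M^3,g,\E,\rho,Q,\varepsilon)$. The plan is to produce a $\mu$-bubble whose asymptotically flat end is two-dimensional, and then contradict the strict positivity of $Q$ through a Gauss--Bonnet/Cohn-Vossen argument.

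\textbf{Step 1: construct the bubble.} We construct $v$ on $\Omega_\delta$ by Proposition~\ref{solving-conformal-factor} and set $\hat\rho=\rho v$. By Corollary~\ref{corpositive}, which applies since $\varepsilon<\frac{1}{12}<\frac{7}{12}$, we have the pointwise inequality
\[
R_g-2\Delta_g\log\hat\rho-\bigl(\tfrac23+\varepsilon\bigr)|\nabla\log\hat\rho|^2\ge Q>0 .
\]
Proposition~\ref{existinductivebuble} then gives a $\mu$-bubble $\Sigma^2$. In dimension three it is smooth, with one end $\E_\Sigma$ that is asymptotically planar: it is a graph $x^3=z(x')$ with $z-a_0\in C^{3,\alpha}_{-\epsilon}$. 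Proposition~\ref{inductive-weigh-strong-stability} applies for all $f$ with $f-c$ compactly supported in $\overline{\E}_\Sigma$. Take $f=1$ on a large compact region and cut off logarithmically on the other ends of $\Sigma$, which are complete (they exit toward $\partial\Omega_\delta$). This yields
\[
\int_\Sigma \hat\rho\Bigl(\tfrac12R_{g_\Sigma}-\Delta_\Sigma\log\hat\rho-\tfrac12\bigl(\tfrac23+\varepsilon\bigr)|\nabla_\Sigma\log\hat\rho|^2\Bigr)f^2\ge\int_\Sigma\hat\rho\hat Qf^2-\int\hat\rho|\nabla f|^2 .
\]

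\textbf{Step 2: absorb the weight.} We integrate by parts: $-\int\hat\rho\,\Delta\log\hat\rho\, f^2=\int \hat\rho|\nabla\log\hat\rho|^2f^2+2\int\hat\rho f\langle\nabla\log\hat\rho,\nabla f\rangle$. Since $1-\frac13-\frac\varepsilon2>0$, the $|\nabla\log\hat\rho|^2$ term has a favorable sign. We use it to absorb the cross term by Cauchy--Schwarz, at the cost of a constant multiple of $\int\hat\rho|\nabla f|^2$. The result is
\[
\int_\Sigma \hat\rho K_\Sigma f^2\ \ge\ \int_\Sigma \hat\rho\hat Q f^2 - C\int_\Sigma \hat\rho|\nabla f|^2 .
\]
In dimension two the logarithmic cutoff makes $\int|\nabla f|^2\to0$, given quadratic area growth on all ends. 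This holds on $\E_\Sigma$. On the auxiliary ends I would either invoke Lesourd--Unger--Yau shielding (the bubble lies in $\Omega_\delta$ with $\hat\Phi\to-\infty$) or use the fact that the weight there can be arranged compactly; this is a point to check. We thus obtain $\int_\Sigma\hat\rho K f^2\ge c>0$ uniformly.

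\textbf{Step 3: contradict Cohn-Vossen. This is the main obstacle.} The weight $\hat\rho$ is not constant, so Gauss--Bonnet does not apply directly. I would first try to remove the weight by a conformal change $\tilde g=\hat\rho^{\,2}g_\Sigma$ (or a suitable power). In two dimensions, $\tilde K\,dA_{\tilde g}=(K-\Delta\log\hat\rho)\,dA$. This converts the weighted inequality with the term $-\Delta\log\hat\rho-\tfrac12(\tfrac23+\varepsilon)|\nabla\log\hat\rho|^2$ (times $2$) into $\int \tilde K\,d\tilde A>0$ after discarding a nonnegative gradient term. Because $\hat\rho\to1$ on $\E_\Sigma$, the end stays asymptotically planar, with total geodesic curvature at infinity $2\pi$ in the limit. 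Cohn-Vossen/Gauss--Bonnet on a large exhaustion then gives
\[
\int_{\Sigma}\tilde K\,d\tilde A\le 2\pi\chi(\Sigma)-2\pi\le 0 ,
\]
using that the boundary curves at infinity contribute $2\pi$, and that auxiliary complete ends contribute nonpositively by Cohn-Vossen. This contradicts the strict positivity. Alternatively, for a bubble with only the asymptotically flat end, $\chi\le1$ gives the same contradiction. Verifying the boundary terms at infinity, including the decay $\hat\rho-1=O(r^{-1-\epsilon})$ on $\E_\Sigma$ needed to kill flux terms, completes the argument.
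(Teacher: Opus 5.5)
\textbf{The gap is in Step 2, and the argument depends on that step.} After your integration by parts, the stability inequality reads
\[
\int_\Sigma\hat\rho|\nabla f|^2+\int_\Sigma\hat\rho Kf^2+\Bigl(\tfrac23-\tfrac{\varepsilon}{2}\Bigr)\int_\Sigma\hat\rho f^2|\nabla\log\hat\rho|^2+2\int_\Sigma\hat\rho f\langle\nabla f,\nabla\log\hat\rho\rangle\ \ge\ \int_\Sigma\hat\rho\hat Qf^2 .
\]
The $|\nabla\log\hat\rho|^2$ term has a positive coefficient on the side that is bounded \emph{below}, so its sign is unfavorable. To isolate $\int\hat\rho Kf^2$ you need an \emph{upper} bound on the remaining terms. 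No multiple of $\int\hat\rho|\nabla f|^2$ bounds $\int\hat\rho f^2|\nabla\log\hat\rho|^2$ from above. For instance, with $f\equiv1$ you get only $\int\hat\rho K+(\frac23-\frac\varepsilon2)\int\hat\rho|\nabla\log\hat\rho|^2\ge\int\hat\rho\hat Q$, which says nothing about the sign of $\int\hat\rho K$.

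Step 3 does not repair this. In two dimensions the change $\hat\rho^{2}g_\Sigma$ only rewrites $(K-\Delta\log\hat\rho)\,dA$. Since $|\nabla\log\hat\rho|=O(r^{-2})$ on $\E_\Sigma$, the flux term vanishes and $\int\tilde K\,d\tilde A=\int K\,dA$, so the change is cosmetic. It removes neither the factor $\hat\rho$ multiplying the integrand nor the term $\hat\rho|\nabla f|^2$. Your ``discarding a nonnegative gradient term'' is therefore exactly the unproved step.

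\textbf{The missing idea is the test function $f=\hat\rho^{-1/2}\psi$, with $\psi$ compactly supported.} Then $\hat\rho f^2=\psi^2$ and $\hat\rho|\nabla f|^2=|\nabla\psi|^2-\psi\langle\nabla\psi,\nabla\log\hat\rho\rangle+\tfrac14\psi^2|\nabla\log\hat\rho|^2$. Integrating $-\psi^2\Delta\log\hat\rho$ by parts gives
\[
\int_\Sigma\Bigl(|\nabla\psi|^2+\psi\langle\nabla\psi,\nabla\log\hat\rho\rangle-\bigl(\tfrac{1}{12}+\tfrac{\varepsilon}{2}\bigr)\psi^2|\nabla\log\hat\rho|^2+K\psi^2\Bigr)\ \ge\ \int_\Sigma\hat Q\psi^2 .
\]
The coefficient is now negative precisely because $\frac14<\frac12(\frac23+\varepsilon)$. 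Young's inequality then absorbs the cross term and yields the unweighted inequality $\int(4|\nabla\psi|^2+K\psi^2)\ge\int\hat Q\psi^2$. The logarithmic cut-off on $\E_\Sigma$ gives $\int_\Sigma K\ge\int_\Sigma\hat Q>0$. This contradicts Gauss--Bonnet on $D_r$, since $\chi(D_r)\le1$ and the boundary turning is $2\pi+o(1)$. This is the paper's argument.

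Two side remarks:
\begin{itemize}
\item $\Sigma$ has no auxiliary ends. The condition $\hat\Phi\to-\infty$ at $\partial\Omega_\delta$ acts as a barrier keeping the bubble a positive distance from $\partial\Omega_\delta$, and $\Omega_\delta\setminus\E$ is precompact. Hence $\Sigma\setminus\E_\Sigma$ is compact, and neither shielding nor Cohn--Vossen is needed.
\item On $\E_\Sigma$, $\hat\rho-1$ is only $O(r^{-1})$ because of the monopole coefficient $\hat\beta$, not $O(r^{-1-\epsilon})$. This is still enough for the flux terms to vanish.
\end{itemize}
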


\begin{proof} 
Suppose to the contrary that $m(\E,g)+\left(\tfrac{n-2}{n-1}\right)\beta \leq 0$. As in the proof of Corollary \ref{inductive-step}, we obtain a weak $2$-data set $(\Sigma^2,g_\Sigma,E_\Sigma,\hat{\rho},\hat{Q},\varepsilon)$. In particular, for any $f\in C^1_c(\Sigma^2)$ the following stability inequality is satisfied
\begin{align}
 &  \int_{\Sigma^{n-1}} \!\!\left(\hat{\rho}|\nabla_\Sigma f|_{g_{\Sigma}}^2 \!+\!\frac{1}{2}\hat{\rho}\!\left(\! R_{g_{\Sigma}}\!-2\Delta_{g_\Sigma}\log\hat \rho-\left(\!\frac{2}{3}+\varepsilon\right)|\nabla_{\Sigma}\log\hat \rho|_{g_{\Sigma}}^2\right)f^2 \right)dV_{g_{\Sigma}}\nonumber\\
 \geq &\int_{\Sigma^{n-1}} \hat \rho \, \hat Qf^2 dV_{g_{\Sigma}}.
\end{align}
We now rearrange this inequality so that it is adapted to the logarithmic cut-off argument. Thus, write $f=\hat{\rho}^{-1/2}\psi$ where $\psi\in C^{1}_c(\Sigma^2)$, integrate by parts, and apply Young's inequality to the mixed term to obtain
\begin{equation}
   \int_{\Sigma^2} \left(4|\nabla_{\Sigma}\psi|_{g_\Sigma}^2+K_{g_\Sigma}\psi^2\right)dV_{g_\Sigma}\geq \int_{\Sigma^2}  \hat{Q}\psi^2 dV_{g_\Sigma},
\end{equation}
where $K_{g_\Sigma}$ denotes Gaussian curvature.

Following standard arguments, let $x$ denote Cartesian coordinates on the asymptotically flat end $\E_\Sigma$, and for $r_0 >1$ define the logarithmic cut-off function
\begin{equation}
    \psi_{r}(x)=\begin{cases}
        1 \qquad\qquad\quad   \text{if } |x|\leq r \\
        2-\frac{\log|x|}{\log r}  \quad \text{ if } r <|x|\leq r^2\\
        0 \qquad\qquad\quad \text{if } |x|\geq r^2 
    \end{cases}.
\end{equation}
Smoothing this function at the radii $r$ and $r^2$ and using the same notation, we note that 
since the end has quadratic area growth
\begin{equation}
    \int_{\Sigma^2} |\nabla_\Sigma \psi_{r}|_{g_\Sigma}^2 dV_{g_\Sigma}\rightarrow 0 \quad \quad\text{as } r\rightarrow\infty. 
\end{equation} 
Combining this with the positivity of $\hat{Q}$, and the decay of the metric, it follows that 
\begin{equation}
\int_{\Sigma^2}K_{g_\Sigma} dV_{g_\Sigma}>0.
\end{equation}
On the other hand, let $D_r\subset\Sigma^2$ be the compact connected domain bounded by the coordinate circle $\{|x|=r\}$. Since the end $\E_\Sigma$ is asymptotically flat, the geodesic curvature of the boundary circle satisfies
\begin{equation}
\kappa_{g_\Sigma}=r^{-1}+o(r^{-1}),\quad\qquad \int_{\partial D_R}\kappa_{g_{\Sigma}} ds=2\pi+o(1).
\end{equation}
Therefore, since the Euler characteristic admits the bound $\chi(D_r)\leq 1$, Gauss-Bonnet implies
\begin{equation}
\lim_{r\rightarrow\infty}\int_{D_r}K_{g_\Sigma}dV_{g_\Sigma}=\lim_{r\rightarrow\infty}\left(2\pi\chi(D_r)-\int_{\partial D_r}\kappa_{g_\Sigma}ds\right)\leq 0,
\end{equation}
yielding a contradiction.
\end{proof}


\begin{proof}[Proof of Theorem \ref{general-PMT} for $n>3$] Suppose the contrary that the generalized mass satisfies 
\begin{equation}
  m(\E,g)+\left(\tfrac{n-2}{n-1}\right)\beta \leq 0.
\end{equation} 
Corollary \ref{inductive-step} provides a weak $(n-1)$-data set $(M^{n-1},g',\E',\rho',Q',\varepsilon')$ with vanishing mass,
and strong decay for the weight function
\begin{equation}
m(\E',g')=0,\quad\quad\quad \rho'-1\in C^{2, \alpha}_{3-n-q'_1}(\E').
\end{equation}
In particular, the generalized mass of this new data set is zero.
By repeating this dimensional reduction finitely many times, we arrive at a weak $3$-data set with zero generalized mass, which contradicts Lemma \ref{3-dim-general-PMT}. 
\end{proof}

\subsection{Proof of the inequality in Theorem \ref{A}} 
\begin{plainproof}
Proceeding by contradiction, assume that the mass of some asymptotically flat end is negative, $m(\E,g)<0$. By 
Theorem \ref{density}, we may further assume that $(M^n,g)$ has strictly positive scalar curvature $R_g>0$ on $M^n \setminus\mathcal{S}$, and that all ends including $\E$ are harmonically asymptotically flat. By blowing-up the
singular set as in Proposition \ref{blow-up1}, we obtain a complete manifold $(M^n\setminus\mathcal{S},g')$ such that
$g'$ agrees with $g$ on $\E$. Moreover, for each $\varepsilon\in (0, \varepsilon_n)$, there exists a positive function $\rho\in C^{\infty}(M^n \setminus\mathcal{S})$ such that $\rho= 1$ on $\E$ and
\begin{equation}
 R_{g'}-2\Delta_{g'}\log\rho-\left(\frac{n-1}{n}+\varepsilon\right)|\nabla\log\rho|_{g'}^2>0\quad \text{ on }M^n\setminus\mathcal{S}.
\end{equation}
Now apply Propositions \ref{existence-bubble} and \ref{weighted-stability-inequality} to obtain a $\mu$-bubble
$\Sigma^{n-1}\subset M^n\setminus\mathcal{S}$ associated with $(\rho,\Phi)$, that satisfies the stability inequality
\eqref{inductive1}; here the barrier potential $\Phi$ is constructed in Lemma \ref{potential-existence}. Furthermore,
$(\Sigma^{n-1},g_\Sigma)$ has a single asymptotically flat end $\E_\Sigma$ of zero mass, $m(\E_\Sigma,g_\Sigma)=0$.
If $n=3$, we immediately obtain a contradiction between the stability inequality and the Gauss-Bonnet theorem, as in the proof of Lemma \ref{3-dim-general-PMT}. If $n>3$, then
by blowing-up any potential GMT singularities of the $\mu$-bubble, Theorem \ref{blow-up-2rd-GMT} gives rise to an $(n-1)$-data
set $(\Sigma^{n-1}\setminus\mathcal{S}_\Sigma,g'_\Sigma,\E_\Sigma, \rho', Q',\varepsilon)$ with 
\begin{equation}
m(\E_\Sigma,g'_\Sigma)=0,\quad\quad\quad \rho'=1 \text{ on }\E_\Sigma.
\end{equation}
In particular, the generalized mass of this data set vanishes, which yields a contradiction with Theorem \ref{general-PMT}.
\end{plainproof}

\section{Rigidity Statement of the Singular PMT}
\label{sec5}


The purpose of this section is to prove the rigidity statement in the equality
case of Theorem \ref{A}. In the smooth setting, the vanishing of the ADM mass
is usually treated by combining Ricci-flatness with harmonic coordinates and
then applying the classical splitting or Bartnik-type rigidity argument.
In the present singular setting, however, the lack of uniform Hessian control
for harmonic functions near the singular set prevents a direct application of
these methods. We overcome this difficulty by first proving Ricci-flatness on
the regular set \(M^n\setminus\mathcal S\), then constructing global harmonic
coordinates with H\"{o}lder control near \(\mathcal S\), and finally deriving
a weighted Hessian estimate using cut-offs adapted to the Minkowski dimension
bound on \(\mathcal S\). 

Throughout this section, it will be assumed for convenience that $M^n$ has a single end. The case of additional ends may be treated with minor modifications. 

\subsection{Ricci-flatness and harmonic coordinates}
\begin{lemma}\label{Ricci-flatness} 
Let $(M^n, g,\E)$ be a complete asymptotically flat $L^\infty$-manifold. Suppose that the singular set $\mathcal{S}$ is compact and satisfies the Minkowski dimension upper bound, $\mathrm{dim}_{\mathcal{M}}\mathcal{S} < n-3+\frac{2}{n}$. If the scalar curvature of $g$ is nonnegative on $M^n\setminus \mathcal{S}$, and the mass vanishes $m(\E,g)=0$, then the Ricci curvature vanishes on $M^n \setminus\mathcal{S}$.
\end{lemma}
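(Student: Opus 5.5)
The plan is the standard variational (Schoen--Yau) argument: a Ricci perturbation that would lower the mass. It is made to work across $\mathcal S$ using the $L^\infty$ elliptic theory of Section~\ref{sec2} and the inequality part of Theorem~\ref{A}, which has already been proved.

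We argue by contradiction. Suppose $\mathrm{Ric}_g\neq 0$ at some point $p\in M^n\setminus\mathcal S$. Choose a smooth cut-off $\chi\geq 0$ with compact support in a small ball $B\Subset M^n\setminus\mathcal S$ around $p$, such that $\chi\,\mathrm{Ric}_g\not\equiv 0$. For small $t>0$ set $g_t=g-t\chi\,\mathrm{Ric}_g$. Then:
\begin{itemize}
\item $g_t$ is a complete asymptotically flat $L^\infty$-metric with the same singular set $\mathcal S$, since the change is smooth and compactly supported in the regular set;
\item $g_t=g$ on the end $\mathcal E$, so $m(\mathcal E,g_t)=m(\mathcal E,g)=0$;
\item $R_{g_t}=R_g+t\chi|\mathrm{Ric}_g|^2+O(t\,|\nabla^2\chi|)+O(t^2)$ on $B$.
\end{itemize}
The scalar curvature of $g_t$ may become negative somewhere in $B$, but $g_t=g$ off $B$. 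To correct this, solve the conformal equation
\[
\Delta_{g_t}u_t-\tfrac{n-2}{4(n-1)}R_{g_t}^{-}\,u_t=0,\qquad u_t\to 1,
\]
or, in the form giving the cleanest mass formula, with potential $\mathbf c_t=\tfrac{n-2}{4(n-1)}R_{g_t}$ on $B$ and $0$ elsewhere. Theorem~\ref{positive-solu} applies to $(M^n,g_t)$, because $\dim_{\mathcal M}\mathcal S<n-2$ and $\|\mathbf c_t^{-}\|_{L^{n/2}}=O(t)$ is small. It produces a positive $u_t$ in $C^\infty(M^n\setminus\mathcal S)\cap W^{1,2}\cap L^\infty$, with essential infimum bounded below, $u_t=1+\mathcal C_t r^{2-n}+\dots$, and
\[
\mathcal C_t=-\frac{1}{(n-2)\omega_{n-1}}\int \mathbf c_t u_t\,dV_{g_t}.
\]
The metric $\tilde g_t=u_t^{4/(n-2)}g_t$ is then an $L^\infty$-metric with singular set $\mathcal S$, scalar curvature $\geq 0$ off $\mathcal S$, and mass $2\mathcal C_t$.

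The key computation is the first-order expansion
\[
\int \mathbf c_t u_t\,dV_{g_t}=\tfrac{n-2}{4(n-1)}\,t\int\chi|\mathrm{Ric}_g|^2\,dV_g+o(t).
\]
The divergence part of the first variation of $\int R$ is $\int(-\Delta\mathrm{tr}h+\mathrm{div}\,\mathrm{div}\,h)$ with $h$ compactly supported, and it integrates to zero. The error from $u_t-1$ is $o(t)$, since by the energy estimate \eqref{initial-iteration} $\|u_t-1\|_{L^{2n/(n-2)}}=O(t)$. Hence $\mathcal C_t<0$ for small $t$, so $\tilde g_t$ has negative mass. This contradicts the inequality part of Theorem~\ref{A}, already established under $\dim_{\mathcal M}\mathcal S<n-3+\tfrac2n$. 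Therefore $\mathrm{Ric}_g=0$ on $M^n\setminus\mathcal S$.

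The main obstacle is justifying this expansion and the $o(t)$ control uniformly across $\mathcal S$. This requires the Sobolev constant $C_*$ of Lemma~\ref{sobolev} to be uniform in $t$; it is, since $g_t$ is uniformly equivalent to $g$. It also requires the integration-by-parts of Proposition~\ref{prop2.3} when pairing $u_t$ against $\mathbf c_t$. Because $\mathbf c_t$ is supported in $B$, away from $\mathcal S$, only the global $W^{1,2}$-theory is needed near $\mathcal S$, and no Hessian control there. A secondary point is that the problem is posed with the potential $\mathbf c_t=\tfrac{n-2}{4(n-1)}R_{g_t}\mathbf 1_B$ rather than $R_{g_t}^-$. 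Then $R_{\tilde g_t}=u_t^{-4/(n-2)}(R_{g_t}-\tfrac{4(n-1)}{n-2}\mathbf c_t)=0$ on $B$, while $R_{\tilde g_t}$ equals $u_t^{-4/(n-2)}R_g\geq0$ elsewhere. This keeps the scalar curvature nonnegative while making the mass formula exact.
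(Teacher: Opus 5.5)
Your overall strategy matches the paper's: perturb by $-t\chi\,\mathrm{Ric}_g$, correct conformally via Theorem~\ref{positive-solu}, and contradict the inequality part of Theorem~\ref{A}. There is, however, a genuine gap. Your key expansion $\int\mathbf c_t u_t\,dV_{g_t}=\tfrac{n-2}{4(n-1)}\,t\int\chi|\mathrm{Ric}_g|^2\,dV_g+o(t)$ holds only if $R_g\equiv 0$ on $B$, and you never prove this. In general $\mathbf c_0=\tfrac{n-2}{4(n-1)}R_g\mathbf 1_B\not\equiv 0$, and this breaks the expansion in three ways:
\begin{enumerate}
\item The left-hand side has the zeroth-order term $\tfrac{n-2}{4(n-1)}\int_B R_g u_0\,dV_g$.
\item $\|\mathbf c_t\|_{L^{2n/(n+2)}}$ is $O(1)$ rather than $O(t)$, so \eqref{initial-iteration} does not give $\|u_t-1\|_{L^{2n/(n-2)}}=O(t)$.
\item The first-order coefficient is no longer $\int\chi|\mathrm{Ric}_g|^2$. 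Since $\operatorname{tr}_g(\chi\,\mathrm{Ric}_g)=\chi R_g$, the variation of $dV_{g_t}$ adds $-\tfrac12\int\chi R_g^2\,dV_g$. Also, the divergence terms no longer integrate to zero against the non-constant $u_0$.
\end{enumerate}
The conclusion $\mathcal C_t<0$ survives only through a separate observation. If $R_g\not\equiv 0$ on $B$, then $\mathbf c_0\geq 0$, $\mathbf c_0\not\equiv 0$ and $u_0>0$ already give $\mathcal C_0<0$, with no Ricci perturbation at all. This is exactly the first step of the paper's proof. It shows $R_g\equiv 0$ on $M^n\setminus\mathcal S$ by solving $\Delta_g u-\tfrac{n-2}{4(n-1)}\varphi R_g u=0$, with $\varphi$ a smooth cut-off supported where $R_g>0$. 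Only afterwards does it perturb the metric, and once $R_g=0$ your expansion and your $O(t^2)$ bound on $\int\mathbf c_t(u_t-1)\,dV_{g_t}$ are correct.

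The indicator $\mathbf 1_B$ causes a second problem. Since $g_t=g$ near $\partial B$, the potential $\mathbf c_t$ jumps by $\tfrac{n-2}{4(n-1)}R_g$ across $\partial B$ wherever $R_g\neq 0$ there. Then Theorem~\ref{positive-solu}, which assumes $\mathbf c\in C^\infty(M^n\setminus\mathcal S)$, does not apply as stated. The solution $u_t$ is only $W^{2,p}$ across $\partial B$, so $\tilde g_t$ fails to be smooth on a relatively open piece of the hypersurface $\partial B$. Its singular set is then no longer $\mathcal S$ and is $(n-1)$-dimensional, so the inequality part of Theorem~\ref{A} cannot be invoked.

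The fix is to use a smooth $\varphi$ with $\varphi=1$ on $\operatorname{supp}\chi$, which gives $R_{\tilde g_t}=u_t^{-4/(n-2)}(1-\varphi)R_g\geq 0$ off $\mathcal S$. After the reduction to $R_g\equiv 0$ the issue disappears anyway, because $\mathbf c_t=\tfrac{n-2}{4(n-1)}R_{g_t}$ is then smooth and supported in $\operatorname{supp}\chi$. Finally, your first alternative, using the negative part $\min\{0,R_{g_t}\}$, cannot work: it yields $\mathcal C_t\geq 0$, so it only raises the mass.
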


\begin{proof} 
We first establish a weak version of Lemma \ref{Ricci-flatness} for scalar curvature. Assume by way of contradiction that there is a point $p\in M^n\setminus \mathcal{S}$ with $R_g(p)>0$.
Then there is a small geodesic ball $B_{2r}(p)\Subset M^n \setminus\mathcal{S}$ on which the scalar curvature is strictly positive. Choose a cut-off function $\varphi\in C_c^{\infty}(B_{2r}(p))$ with $\varphi=1$ on $B_{r}(p)$, and $0\leq \varphi\leq1$. Consider the following equation with prescribed asymptotics on the end $\E$:
\begin{equation}
    \Delta_g u-\frac{n-2}{4(n-1)}\varphi R_g u=0 \text{ on }M^n, \quad\quad\quad u\rightarrow 1 \text{ as } r\rightarrow \infty. 
\end{equation}
Theorem \ref{positive-solu} provides a positive solution $u\in C^{\infty}(M^n\setminus \mathcal{S})\cap W^{1,2}(M^n,g)\cap L^{\infty}(M^n)$ such that
\begin{equation}
u-\left(1+\frac{\mathcal{C}}{r^{n-2}}\right)\in C^{2, \alpha}_{2-n-q'}(\mathcal{E}) \quad\quad\mathcal{C}=-\frac{1}{4(n-1)\omega_{n-1}}\int_{M^n}\varphi R_g u \,dV_g <0.
\end{equation}
Moreover, $u$ is uniformly bounded below by a positive constant. 
Then the conformal change $\hat g=u^{\frac{4}{n-2}}g$, yields a complete asymptotically flat $L^\infty$-manifold $(M^n, \hat g, \E)$ satisfying
\begin{equation}
R_{\hat g}=u^{-\frac{4}{n-2}}(1-\varphi)R_g\geq 0 \text{ on }M^n \setminus\mathcal{S}, 
\qquad m(\E,\hat g)=m(\E, g)+2\mathcal{C}<0,
\end{equation}
giving a contradiction to the inequality statement of Theorem \ref{A}. Note that here we use the codimension thrshold of $3-\frac{2}{n}$, in order to apply the positive mass inequality. We conclude that 
the scalar curvature vanishes, $R_g=0$, away from the singular set. 


We next show that \(\operatorname{Ric}_g=0\) on \(M^n\setminus\mathcal S\), following the classic strategy of Schoen--Yau \cite{SchoenYau1979PMT}. Suppose, to the contrary, that \(\operatorname{Ric}_g(p)\neq0\) for some
\(p\in M^n\setminus\mathcal S\). Choose a geodesic ball
\(B_{2r}(p)\Subset M^n\setminus\mathcal S\), and a cut-off function
\(\varphi\in C_c^\infty(B_{2r}(p))\) as above. For
\(t>0\) sufficiently small, define
\begin{equation}
        g_t:=g-t\varphi\,\operatorname{Ric}_g .
\end{equation}
Then \(g_t=g\) outside \(B_{2r}(p)\). In particular, \(g_t\) has the same
asymptotics and the same ADM mass as \(g\):
\begin{equation}
        m(\mathcal E,g_t)=m(\mathcal E,g)=0 .
\end{equation}
Recall the linearization of the scalar curvature 
\begin{equation}
        \left.\frac{d}{dt}\right|_{t=0}R_{g_t}
        =
        -D R_g(\varphi\operatorname{Ric}_g),
\end{equation}
where
\begin{equation}
        D R_g(h)
        =
        -\Delta_g(\operatorname{tr}_g h)
        +\operatorname{div}_g\operatorname{div}_g h
        -\langle \operatorname{Ric}_g,h\rangle_g .
\end{equation}
Using that \(R_g=0\) and integrating by parts produces
\begin{equation}\label{tscalar}
        \int_{M^n} R_{g_t}\,dV_{g_t}
        =
        t\int_{M^n} \varphi |\operatorname{Ric}_g|^2\,dV_g
        +O(t^2)>0
\end{equation}
for all sufficiently small \(t>0\).

Now solve
\begin{equation}
        \Delta_{g_t}u_t
        -
        \frac{n-2}{4(n-1)}R_{g_t}u_t=0 \text{ on } M^n,
        \qquad\quad
        u_t\rightarrow 1
        \text{ as }r\rightarrow \infty.
\end{equation}
Note that for \(t>0\) sufficiently small, the negative part of \(R_{g_t}\) is small,
and thus Theorem \ref{positive-solu} applies to give a positive solution $u_t\in C^{\infty}(M^n\setminus \mathcal{S})\cap W^{1,2}(M^n,g_t)\cap L^{\infty}(M^n)$ such that
\begin{equation}
u_t-\left(1+\frac{\mathcal{C}_t}{r^{n-2}}\right)\in C^{2, \alpha}_{2-n-q'}(\mathcal{E}) \quad\quad\mathcal{C}=-\frac{1}{4(n-1)\omega_{n-1}}\int_{M^n} R_{g_t} u_t \,dV_{g_t}.
\end{equation}
Since \(u_t\to1\) as \(t\to0\), the positivity of the total scalar curvature \eqref{tscalar}
implies that $\mathcal C_t<0$ for all sufficiently small \(t>0\).

Next, define $\hat g_t:=u_t^{\frac{4}{n-2}}g_t$ and observe that these metrics are scalar flat $R_{\hat g_t}=0$, on $M^n\setminus\mathcal S$. Furthermore, \((M^n,\hat g_t,\mathcal E)\) is again a complete asymptotically flat
\(L^\infty\)-manifold with mass
\begin{equation}
        m(\mathcal E,\hat g_t)
        =
        m(\mathcal E,g_t)+2\mathcal C_t
        =
        2\mathcal C_t<0 .
\end{equation}
This contradicts the inequality statement of Theorem \ref{A}. Hence,
\(\operatorname{Ric}_g=0\) away from the singular set.
\end{proof}

On the asymptotically flat end \(\mathcal E\), there are Cartesian coordinates
\(x\!=\!(x^1 \!,\ldots,x^n)\) such that
\begin{equation}
        g_{ij}(x)-\delta_{ij}=O_2(r^{-q}), \quad\quad\quad  \Delta_g x^i=O_1(r^{-1-q}),
        \qquad i,j=1,\ldots,n ,      
\end{equation}
where $q>\frac{n-2}{2}$. Thus each coordinate function \(x^i\) is asymptotically harmonic, and the error may be
solved away to produce global harmonic functions $y^i$, referred to as \textit{harmonic coordinates}, which play
an important role in the rigidity argument.

\begin{lemma}\label{lemma:harmonicCoordinates}
Let $(M^n, g,\E)$ be a complete asymptotically flat $L^\infty$-manifold. Suppose that the singular set $\mathcal{S}$ is compact and satisfies the Minkowski dimension upper bound, 
$\dim_{\mathcal{M}} \mathcal{S}<n-2$. If the order of decay of this end satisfies $q\in (\frac{n-2}{2}, n-2)$, then there exist functions $y^i\in C^{\infty}(M^n\setminus \mathcal{S})$ such that 
\begin{equation}\label{exist-harmonic-func}
    \Delta_g y^i=0 \text{ on } M^n, \qquad y^i-x^i\in C^{2, \alpha}_{1-q}(\E),\qquad i=1,\dots,n,
\end{equation} 
for $\alpha\in (0, 1)$. Moreover, $y^i\in C_{loc}^{0,\gamma}(M^n)$ for some $\gamma\in (0,1)$, and there exists a constant $C>0$ such that 
\begin{equation}
    |\nabla y^i(p)|_g\leq C \max\{ d_{g_0}(p, \mathcal{S})^{\gamma-1},1 \},\quad\quad p\in M^n \setminus\mathcal{S}.
\end{equation}
\end{lemma}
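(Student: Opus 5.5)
The plan is to construct $y^i$ as $y^i=\tilde x^i+w^i$, where $\tilde x^i$ is a smooth extension of the asymptotic coordinate $x^i$ to $M^n$ (cut off inside $\mathcal K$, vanishing near $\mathcal S$). The correction $w^i$ solves $\Delta_g w^i=-\Delta_g\tilde x^i=:f^i$ weakly on $M^n$. Since $\tilde x^i$ vanishes near $\mathcal S$, the source $f^i$ is smooth, compactly supported away from the end up to the term $\Delta_g x^i=O_1(r^{-1-q})$, and is zero in a neighborhood of $\mathcal S$. Because $q>\frac{n-2}{2}$, we have $f^i\in L^{\frac{2n}{n+2}}(M^n)$. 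We then solve exactly as in the proof of Theorem \ref{positive-solu} with $\mathbf c=0$: we take Dirichlet solutions on an exhaustion $\Omega_j$. The energy bound $\int|\nabla w_j|^2\le \|f\|_{L^{2n/(n+2)}}\|w_j\|_{L^{2n/(n-2)}}$ together with Lemma \ref{sobolev} gives uniform $W^{1,2}$ and $L^{2n/(n-2)}$ bounds. We pass to a limit $w^i$, which is a weak solution on all of $M^n$ by Proposition \ref{prop2.3}; this is the step where the capacity hypothesis $\dim_{\mathcal M}\mathcal S<n-2$ enters. The limit is smooth on $M^n\setminus\mathcal S$ by elliptic regularity.

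For the asymptotics, on $\mathcal E$ the weighted analysis of \cite[Corollary A.38]{Lee} applies. Since $\Delta_g w^i=O_{0,\alpha}(r^{-1-q})$ with $1+q<n-1$ (this is where $q<n-2$ is used, to avoid the log/exceptional weight) and $w^i\to0$ in the $L^{2n/(n-2)}$ sense, we get $w^i\in C^{2,\alpha}_{1-q}(\mathcal E)$. Hence $y^i-x^i\in C^{2,\alpha}_{1-q}(\mathcal E)$.

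Next comes the Hölder regularity. In divergence form, $\Delta_g$ near $\mathcal S$ is $\partial_a(\sqrt{\det g}\,g^{ab}\partial_b\cdot)$ with bounded measurable, uniformly elliptic coefficients relative to $g_0$-charts (Definition \ref{linfinity}). Since $y^i$ is a weak $W^{1,2}_{loc}$ solution across $\mathcal S$, the De Giorgi--Nash--Moser theorem gives $y^i\in C^{0,\gamma}_{loc}(M^n)$ with the Morrey-type estimate
\begin{equation}
\operatorname*{osc}_{B^{g_0}_s(x)} y^i\le C (s/s_0)^{\gamma}\operatorname*{osc}_{B^{g_0}_{s_0}(x)}y^i ,
\end{equation}
uniformly for $x$ in a compact neighborhood of $\mathcal S$. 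Moser iteration also gives local $L^\infty$ bounds.

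The gradient bound is the step requiring care, since there is no uniform smooth control of $g$ near $\mathcal S$. I would argue by scaling. Fix $p\in M^n\setminus\mathcal S$ near $\mathcal S$ and set $d=d_{g_0}(p,\mathcal S)$. The ball $B^{g_0}_{d/2}(p)$ lies in the regular set, where $g$ is smooth; the difficulty is that its derivatives are not uniformly bounded as $d\to0$. The naive interior estimate $|\nabla y^i(p)|\le C d^{-1}\operatorname{osc}_{B_{d/2}(p)}y^i\le C d^{\gamma-1}$ requires a gradient estimate on $B_{d/2}(p)$ at scale $d$ with constants independent of $d$. I would first try the following. Rescale $B_{d/2}(p)$ to unit size; on the rescaled ball $g$ is smooth, but its $C^{1,\alpha}$ norm may blow up. Because $\operatorname{Ric}_g$ is not yet available at this stage, I would instead use only the hypothesis of the lemma as stated. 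I expect the intended route is that the geometry near $\mathcal S$ is fixed, and the constant is allowed to depend on it through a compactness argument on each dyadic shell $\{2^{-k-1}\le d\le2^{-k}\}$ with rescaled interior Schauder estimates. If the rescaled metrics fail to be uniformly $C^{1,\alpha}$, the fallback is Cheng--Yau or Bochner-type gradient bounds using a lower Ricci bound on $B_{d/2}(p)$ of order $d^{-2}$, which is scale invariant. This gives $|\nabla y|\le C d^{-1}\sup_{B_{d/2}}|y-y(p)|\le Cd^{\gamma-1}$. Away from $\mathcal S$, the bound $|\nabla y^i|\le C$ follows from the asymptotics on $\mathcal E$ and compactness. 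This scale-invariant interior gradient estimate is the main obstacle, and I would settle it first.
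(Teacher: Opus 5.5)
Your construction of $y^i$, the use of Proposition~\ref{prop2.3} to get a weak solution across $\mathcal S$, and the De Giorgi--Nash--Moser H\"older estimate all match the paper. The genuine gap is the gradient bound. You leave it open, and neither of your routes can close it from the hypotheses you restricted yourself to.

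A shell-by-shell Schauder/compactness argument gives, on each shell $\{2^{-k-1}\le d_{g_0}(\cdot,\mathcal S)\le 2^{-k}\}$, a constant that depends on the $C^{1,\alpha}$ norm of the rescaled metric there. But an $L^\infty$-metric is only required to be smooth on the regular set and uniformly equivalent to $g_0$. It can have structure at scales far below $d_{g_0}(p,\mathcal S)$, so these norms may blow up as $k\to\infty$ and no uniform $C$ results. For the same reason a bound $\operatorname{Ric}_g\ge -C\,d_{g_0}(\cdot,\mathcal S)^{-2}$ is not available. The estimate should not be expected from the listed hypotheses alone.

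The missing ingredient is Ricci-flatness. In the paper this lemma sits in the rigidity setting, after Lemma~\ref{Ricci-flatness} has shown $\operatorname{Ric}_g=0$ on $M^n\setminus\mathcal S$. The paper's proof uses exactly this, even though it is not written into the statement. Set $r_p=d_{g_0}(p,\mathcal S)$, and work on a $g$-ball of radius comparable to $r_p$, which lies in the regular set by uniform equivalence of $g$ and $g_0$. Apply the Cheng--Yau estimate with $K=0$ to the positive harmonic function $y^i-\inf_{B_{r_p/2}(p)}y^i+\epsilon$. Combined with the H\"older estimate, this gives $|\nabla y^i(p)|_g\le c\,r_p^{-1}\operatorname{osc}_{B_{r_p/2}(p)}y^i\le c\,r_p^{\gamma-1}$. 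So your fallback is the right mechanism, but you discarded the one hypothesis that makes it scale-invariant.

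There is also an error in your existence step. In general $\Delta_g\tilde x^i$ is only $O(r^{-1-q})$, and $r^{-1-q}\in L^{\frac{2n}{n+2}}$ near infinity if and only if $q>\frac n2$, not $q>\frac{n-2}{2}$. The range $q\in(\frac{n-2}{2},\frac n2]$ covers the whole admissible range when $n=3,4$. There the solution generically has $|\nabla w^i|\sim r^{-q}$ and infinite Dirichlet energy. So the uniform $W^{1,2}$ and $L^{\frac{2n}{n-2}}$ bounds you claim along the exhaustion cannot hold, and neither can ``$w^i\to0$ in $L^{\frac{2n}{n-2}}$''.

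The paper avoids this by appealing directly to weighted elliptic theory \cite[Theorem~A.40]{Lee}. To keep your energy approach, first remove the error on the smooth end. Solve $\Delta_g v^i=\Delta_g x^i$ on an exterior region $\{r\ge R\}$ with $v^i\in C^{2,\alpha}_{1-q}$, using weighted theory there. Then extend $\zeta(x^i-v^i)$ instead of $\zeta x^i$. The remaining source is bounded and compactly supported, and your exhaustion argument with Proposition~\ref{prop2.3} then goes through.
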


\begin{proof} 
Choose a smooth cut-off function \(\zeta\) which is equal to \(1\) on the
asymptotic end outside a compact set and which vanishes on a neighborhood of
the singular set. Set $\tilde{x}^i:=\zeta x^i$.
Then \(\tilde{x}^i=x^i\) near infinity and, by the asymptotic flatness of the end,
\begin{equation}
        \Delta_g \tilde{x}^i=O_1(r^{-q-1})
        \quad\text{on } \mathcal E .
\end{equation}
We solve
\begin{equation}
        \Delta_g w^i=-\Delta_g \tilde{x}^i \quad\text{on }M^n,
\end{equation}
with \(w^i=o(r)\) on the asymptotic end. Since
\(\dim_{\mathcal M}\mathcal S<n-2\), the set \(\mathcal S\) has zero
\(W^{1,2}\)-capacity; hence no boundary condition is imposed along
\(\mathcal S\), and the weak equation across \(\mathcal S\) is equivalent to
the equation on \(M^n\setminus\mathcal S\). The weighted elliptic theory on
asymptotically flat manifolds \cite[Theorem~A.40]{Lee}, implies that $w^i\in C^{2,\alpha}_{1-q}(\mathcal E)$.
Define $y^i:=\tilde{x}^i+w^i$, then this function is weakly harmonic on $M^n$, is smooth on $M^n \setminus\mathcal{C}$,
and satisfies $y^i-x^i=w^i\in C^{2,\alpha}_{1-q}(\mathcal E)$.

It remains to show the H\"{o}lder regularity of $y^i$ and the gradient estimate near $\mathcal{S}$. Since $(M^n,g)$ is an $L^\infty$-manifold, the operator $\Delta_g$ may be written as a divergence-form elliptic operator with $L^\infty$ coefficients that are uniformly elliptic.  Standard De Giorgi--Nash--Moser theory \cite[Theorem 8.9]{Han} applies, even across the singular set, to yield $\gamma\in(0,1)$ such that $y^i \in C^{0,\gamma}(M^n)$.

We now prove the desired gradient estimate. Because the metric $g$ is smooth outside $\mathcal{S}$, standard gradient estimates for harmonic functions give a uniform bound for $|\nabla y^i|_g$ outside of $N_{r_0}(\mathcal{S})$, the closed $r_0$-neighborhood of $\mathcal{S}$ with respect to the background metric $g_0$, where $r_0>0$ is fixed. Take $p\in N_{r_0}(\mathcal{S})\setminus \mathcal{S}$, and set $r_p:=d_{g_0}(p,\mathcal{S})>0$. Then $B_{r_p /2}(p)\subset M^n \setminus\mathcal{S}$. Since the Ricci curvature vanishes in this ball, we can apply the 
Cheng--Yau gradient estimate \cite[Corollary 3.2, Chapter I]{SchoenYau1994Lectures} to obtain 
\begin{equation}\label{fahjg09haq09hh}
    |\nabla y^i(p)|_g\leq \sup_{B_{r_p /4}}|\nabla(y^i-\inf_{B_{r_p /2}(p)} y^i)|_g\leq cr_p^{-1} \sup_{B_{r_p /2}(p)}|y^i-\inf_{B_{r_p /2}(p)} y^i|.  
\end{equation} 
Furthermore, by the H\"{o}lder estimate above
\begin{equation}\label{-09}
\sup\limits_{p', p''\in B_{r_p /2}(p)}|y^i(p')-y^i(p'')|\leq c_0 d_{g_0}(p',p'')^\gamma\leq c_1 r_p^{\gamma}. 
\end{equation}
Combining \eqref{fahjg09haq09hh} and \eqref{-09} yields the desired result. We note that the final constant obtained is uniform,
since $N_{r_0}(\mathcal{S})$ is compact.
\end{proof}




Define functions $g_{ij}=g(\partial_{y^i}, \partial_{y^j})$ and $g^{ij}=g(dy^i, dy^j)$, then by Bochner's formula
\begin{equation}
    \frac{1}{2}\Delta_g g^{ii}=\frac{1}{2}\Delta_g |\nabla y^i|^2_g=|\nabla^2 y^i|_g^2+\operatorname{Ric}_g(\nabla y^i, \nabla y^i)=|\nabla^2 y^i|_g^2\in C^{0,\alpha}_{-2-2q}(\E),
\end{equation} 
where $q\in (\frac{n-2}{2}, n-2)$. Therefore, the basic asymptotic analysis of \cite[Corollary A.38]{Lee} shows that 
\begin{equation}\label{5.20}
g^{ij}-\left(\delta_{ij}-\frac{c_{ij}}{r^{n-2}}\right)\in C^{2, \alpha}_{2-n-q_0}(\E)
\end{equation}
for some set of constants $c_{ij}=c_{ji}$, where $q_0>0$. By performing a rotation of $y$-coordinates if necessary, it may be assumed without loss of generality that $c_{ij}=c_i \delta_{ij}$ for $i,j=1,\ldots, n$. The metric satisfies the following asymptotics 
\begin{equation}\label{metric-expansion-harmonic}
    g_{ij}=\left(1+\frac{c_i}{r^{n-2}}\right)\delta_{ij}+O_2(r^{2-n-q_0}).
\end{equation}

\begin{corollary}\label{mass-under-harmonic-coord} 
Under the hypotheses and setting of Lemma \ref{lemma:harmonicCoordinates}, it holds that
\begin{equation}
    m(\E,g)=\frac{(n-2)}{2n}\sum^n_{i=1}c_i.
\end{equation}
\end{corollary}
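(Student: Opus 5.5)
The plan is to compute the ADM mass directly from the expansion \eqref{metric-expansion-harmonic}. The mass is a coordinate invariant of the end (Bartnik--Chru\'sciel). By Lemma~\ref{lemma:harmonicCoordinates} we have $y^i-x^i\in C^{2,\alpha}_{1-q}(\E)$, so the $y$-coordinates are admissible asymptotically flat coordinates on $\E$. Therefore I may evaluate
\begin{equation}
m=\frac{1}{2(n-1)\omega_{n-1}}\lim_{r\to\infty}\int_{S_r}\sum_{i,j}\left(\partial_i g_{ij}-\partial_j g_{ii}\right)\nu^j\,dA_\delta
\end{equation}
in the $y$-coordinates, with $r=|y|$. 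The error term $O_2(r^{2-n-q_0})$ contributes $O(r^{1-n-q_0})\cdot r^{n-1}\to0$ to the flux, so only the leading part $g_{ij}=(1+c_i r^{2-n})\delta_{ij}$ matters.

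The flux computation uses $\partial_j r^{2-n}=(2-n)r^{1-n}\,y^j/r$ and $\nu^j=y^j/r$.

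\begin{itemize}
\item The divergence term is $\sum_{i,j}\partial_i g_{ij}\nu^j=\sum_i c_i(2-n)r^{1-n}(y^i/r)^2$.
\item The trace term is $\sum_{i,j}\partial_j g_{ii}\nu^j=(2-n)r^{1-n}\sum_i c_i$.
\end{itemize}

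Integrating over $S_r$ and using the symmetry $\int_{S_r}(y^i/r)^2\,dA=\frac{1}{n}|S_r|=\frac1n\omega_{n-1}r^{n-1}$, the flux equals
\begin{equation}
(n-2)\,\omega_{n-1}\Bigl(1-\tfrac1n\Bigr)\sum_i c_i=\frac{(n-2)(n-1)}{n}\,\omega_{n-1}\sum_i c_i .
\end{equation}
Dividing by $2(n-1)\omega_{n-1}$ gives $m=\frac{n-2}{2n}\sum_i c_i$, as claimed.

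The remaining care points are bookkeeping rather than difficulties. First, the expansion \eqref{metric-expansion-harmonic} must follow from \eqref{5.20} by matrix inversion: $g_{ij}=\delta_{ij}+c_{ij}r^{2-n}+O(r^{4-2n})$, and $4-2n<2-n-q_0$ once $q_0\le n-2$ (shrinking $q_0$ if needed). Second, the rotation diagonalizing $c_{ij}$ is a Euclidean isometry, so it leaves the flux unchanged. Third, I should confirm that the $O(r^{-q})$ discrepancy between the $x$- and $y$-coordinate spheres does not affect the limit; this is covered by the coordinate invariance of the mass, which applies because $q>\frac{n-2}{2}$ and $R_g=0$ on the end is integrable.

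The only potential obstacle is justifying the use of the $y$-coordinates as a valid chart near infinity. It is handled by the inverse function theorem, since $Dy=I+O(r^{-q})$ there.
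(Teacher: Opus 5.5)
Your proposal is correct and is essentially the paper's proof. Both arguments compute the ADM flux directly in the harmonic coordinates from the expansion \eqref{metric-expansion-harmonic}, obtain the flux density $(n-2)\bigl(\sum_i c_i-\sum_i c_i(\nu^i)^2\bigr)r^{1-n}$, and use $\int_{S_r}(\nu^i)^2\,dA_\delta=\frac1n\omega_{n-1}r^{n-1}$ to reach $\frac{(n-2)(n-1)}{n}\omega_{n-1}\sum_i c_i$. Your additional remarks justify points the paper leaves implicit: the admissibility of the $y$-chart, the coordinate invariance of the mass, and the matrix inversion from \eqref{5.20}. In that inversion the inequality $4-2n\le 2-n-q_0$ only needs to be non-strict.
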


\begin{proof}
In the harmonic coordinates \((y^1,\ldots,y^n)\), we have the following expansions
\begin{equation}
     \sum_{i=1}^{n}   \partial_i g_{ij}
        =
        (2-n)c_j r^{1-n}\nu^j
        +
        O(r^{1-n-q_0}),
\end{equation}
and
\begin{equation}
\sum_{i=1}^{n}  \partial_j g_{ii}
        =
        (2-n)\left(\sum_{i=1}^n c_i\right)r^{1-n}\nu^j
        +
        O(r^{1-n-q_0}),
\end{equation}
where $\nu^j=\frac{y^j}{|y|}$. Therefore, the ADM mass flux density satisfies
\begin{align}
        \sum_{i,j=1}^n
        \bigl(\partial_i g_{ij}-\partial_j g_{ii}\bigr)\nu^j
        =
        (n-2)
        \left(
        \sum_{i=1}^n c_i
        -
        \sum_{i=1}^n c_i(\nu^i)^2
        \right)r^{1-n}  
         +O(r^{1-n-q_0}).
\end{align}
Integrating over the coordinate sphere \(S_r\) and using
\begin{equation}
        \int_{S_r}(\nu^i)^2\,dA_\delta
        =
        \frac{1}{n}\omega_{n-1}r^{n-1},
\end{equation}
we obtain
\begin{equation}
        \int_{S_r}
        \sum_{i,j=1}^n
        \bigl(\partial_i g_{ij}-\partial_j g_{ii}\bigr)\nu^j\,dA_\delta
        =\frac{(n-2)(n-1)}{n}
        \omega_{n-1}
        \sum_{i=1}^n c_i
        +o(1),
\end{equation}
from which the desired result follows.
\end{proof}

\subsection{Proof of rigidity in Theorem \ref{A}} 
\begin{plainproof}
Let $(M^n, g, \E)$ be a complete asymptotically flat $L^\infty$-manifold with a single end $\E$. 
Assume that the singular set $\mathcal{S}$ is compact and satisfies the Minkowski dimension upper bound, $\dim_\mathcal{M}\mathcal{S}\leq n-3+(n-1)^{-1}$, and that
\begin{equation}
    R_g\geq 0 \text{ on } M^n\setminus \mathcal{S}, \quad\qquad m(\E,g)=0. 
\end{equation}
Note that since $(n-1)^{-1}<2n^{-1}$, the hypotheses for the inequality portion of Theorem \ref{A} are satisfied.
According to Lemma \ref{Ricci-flatness}, we have $\operatorname{Ric}_g=0$ on $M^n\setminus \mathcal{S}$. From Lemma \ref{lemma:harmonicCoordinates}, there are global harmonic functions $\{y^i\}_{i=1}^{n}$ on $M^n$, and on $\E$ these functions form a coordinate system such that
\begin{equation}
     g_{ij}-\left(1+\frac{c_i}{r^{n-2}}\right)\delta_{ij}\in C^{2,\alpha}_{2-n-q_0}(\E).
\end{equation}

The presence of the singular set $\mathcal S$ prevents us from obtaining
the standard  Hessian estimates for the harmonic coordinates
$y^i$ near $\mathcal S$. Consequently, the classical rigidity argument
does not apply directly in the present setting. We overcome this difficulty
by deriving new estimates which control the relevant gradient and Hessian
terms. To this end, we introduce the following quantities
\begin{equation}
    u^i=(|\nabla y^i|_g^2+1)^b,\quad\quad\quad
    b=\frac1 2-\frac{1}{2(n-1)}+\frac{n\epsilon}{2(n-1)},
\end{equation}
where $\epsilon>0$ is an arbitrarily small parameter. A direct computation yields that on $M^n\setminus \mathcal{S}$,
\begin{align}\label{hessian-laplacian}
 \Delta_g u^i
 =&b (|\nabla y^i|_g^2+1)^{b-1}\Delta_g|\nabla y^i|^2+4b(b-1)(|\nabla y_i|_g^2+1)^{b-2}|\nabla y^i|_g^2|\nabla|\nabla y^i|_g|_g^2 \nonumber\\
 \geq& 2b(|\nabla y^i|_g^2+1)^{b-1}\left(|\nabla^2 y^i|_g^2+2(b-1)|\nabla|\nabla y^i|_g|_g^2\right)\nonumber\\
 \geq &2b(|\nabla y^i|_g^2+1)^{b-1}\left(1+2(b-1)\frac{n-1}{n}\right)|\nabla^2 y^i|_g^2 \\
 =&2b\epsilon(|\nabla y^i|_g^2+1)^{b-1}|\nabla^2 y^i|_g^2\nonumber
\end{align}
where we have used the Bochner's formula and the refined Kato inequality 
\begin{equation}
|\nabla^2 y^i|_g^2\geq \frac{n}{n-1}|\nabla|\nabla y^i|_g|_g^2.
\end{equation}

Choose the same cut-off function $\chi_r \in C^{\infty}_c(M^n)$ used in the proof of Proposition \ref{prop2.3}, which satisfies
$0\leq \chi_r\leq 1$ and additionally
\begin{equation}\label{cut-off-near-sing}
\chi_r=1 \text{ on } N_r(\mathcal{S}), \quad\quad \supp\, \chi_r\subset N_{2r}(\mathcal{S})
\quad\quad \int_{M^n}|\nabla\chi_r|_g^2\leq C_\delta {r}^{1-\delta}, 
\end{equation} 
where $\delta\in (\frac{1}{n-1},1)$.  Set $r\leq r_0/4$ and $\eta_r=1-\chi_r$, where $r_0$ is as in the proof of Lemma \ref{lemma:harmonicCoordinates}. According to Lemma \ref{lemma:harmonicCoordinates} we have
\begin{equation}\label{harmonic-gradient-near-sing}
    |\nabla y^i|\leq C r^{\gamma-1} \quad\text{ on }N_{2r}(\mathcal{S})\setminus N_{r}(\mathcal{S}).
\end{equation} 
Multiplying both sides of \eqref{hessian-laplacian} by $\eta_r^2$, integrating by parts, and applying Young's inequality produces
\begin{align}
 &\int_{M^n}2\epsilon b\eta_r^2(|\nabla y_i|_g^2+1)^{b-1}|\nabla^2 y^i|_g^2 dV_g \nonumber\\
 &=\lim_{R\rightarrow \infty}\int_{S_R}\nu(u^i)dA_g-2\int_{N_{2r}(\mathcal{S})\setminus N_r(\mathcal{S})}\!\eta_r g(\nabla\eta_r,\nabla u^i)dV_g\\
    &\leq (n-2)c_i \omega_{n-1}+4b\int_{N_{2r}(\mathcal{S})\setminus N_r(\mathcal{S})} \!\!\eta_r|\nabla\eta_r|_g(|\nabla y_i|_g^2+1)^{b-1}|\nabla y^i|_g|\nabla|\nabla y^i|_g|_g dV_g\nonumber\\
    &\leq (n-2)c_i \omega_{n-1}+\epsilon b\int_{M^n} \eta_r^2(|\nabla y_i|_g^2+1)^{b-1}|\nabla^2 y^i|_g^2 dV_g \nonumber\\
    &\quad+\frac{4b}{\epsilon}\int_{N_{2r}(\mathcal{S})\setminus N_r(\mathcal{S})}\!|\nabla \eta_r|_g^2(|\nabla y^i|_g^2+1)^b dV_g, \nonumber
\end{align} 
where $\nu$ is the unit outer normal to the coordinate sphere $S_R$, and we have used \eqref{5.20} in the third line.
Next employ \eqref{cut-off-near-sing} and \eqref{harmonic-gradient-near-sing} to obtain
\begin{align}
    \epsilon b\!\int_{M^n}\!\! \eta^2_r(|\nabla y_i|_g^2+1)^{b-1}|\nabla^2 y^i|_g^2 dV_g &\leq  (n-2)c_i \omega_{n-1}+C_\epsilon  r^{2b(\gamma-1)}\int_{M^n}\! |\nabla\chi_r|_g^2 dV_g \nonumber\\
    &\leq (n-2)c_i \omega_{n-1}+C_\epsilon C_\delta r^{2b(\gamma-1)+1-\delta}. 
\end{align}
Now choose $\epsilon$ sufficiently small, and $\delta$ sufficiently close to $\frac{1}{n-1}$ to achieve
\begin{equation}
2b(\gamma-1)+1-\delta>0.
\end{equation}
By sending $r\rightarrow 0$ it follows that 
\begin{equation}
    \epsilon b\sum_{i=1}^n\int_{M^n}(|\nabla y_i|_g^2+1)^{b-1}|\nabla^2 y^i|_g^2 dV_g\leq (n-2)\omega_{n-1}\sum^{n}_{i=1}c_i=0,
\end{equation}
where we have used Corollary \ref{mass-under-harmonic-coord} and $m(\E,g)=0$.  Thus, 
\begin{equation}\label{h1}
|\nabla^2 y^i|_g=0\quad \text{ on }M^n\setminus \mathcal{S}, \quad i=1,\dots,n.
\end{equation}

We now complete the rigidity argument. Since
\begin{equation}
        \dim_{\mathcal M}\mathcal S
        \le n-3+\frac{1}{n-1}<n-1,
\end{equation}
we have \(\dim_{\mathcal H}\mathcal S<n-1\).
By Szpilrajn's theorem \cite[Chapter VII, Section 4]{HurewiczWallman1941DimensionTheory}, the topological dimension (an integer) of a metric space is bounded above by its Hausdorff dimension, so that $\dim_{\mathrm{top}}\mathcal S\le n-2$.
According to the classical dimension-theoretic separation theorem, a closed subset of
an \(n\)-manifold of topological dimension at most \(n-2\) does not separate
the manifold; see \cite[Theorem~1.8.13]{Engelking1978DimensionTheory}.
Hence \(M^n\setminus\mathcal S\) is connected. Moreover, in light of \eqref{h1},
the vector fields \(\nabla y^i\) are
parallel on \(M^n\setminus\mathcal S\). It follows that the functions $g^{ij}:=g(\nabla y^i,\nabla y^j)$
are constant on \(M^n\setminus\mathcal S\); note that connectedness is used here. Since \(g^{ij}\to\delta^{ij}\) on
the asymptotically flat end, we obtain
\begin{equation}
        g(\nabla y^i,\nabla y^j)=\delta^{ij}
        \quad\text{on }M^n\setminus\mathcal S .
\end{equation}
Hence, the map
\begin{equation}
        \Psi=(y^1,\ldots,y^n): M^n \rightarrow\mathbb{R}^n,
\end{equation}
yields
\begin{equation}
        g=\sum_{i=1}^n dy^i\otimes dy^i
        =\Psi^* \delta
        \quad\text{on }M^n\setminus\mathcal S .
\end{equation}
In particular, $\Psi|_{M^n\setminus\mathcal S}$
is a smooth local isometry onto its image.

We now show that $\Psi$ is surjective. Since \(\Psi(x)=x+o(r)\) on the unique asymptotically flat end, \(\Psi\) is
proper. Moreover, on each sufficiently large coordinate sphere \(S_R\subset
\mathcal E\), the map
\begin{equation}
        x\mapsto \frac{\Psi(x)}{|\Psi(x)|}
\end{equation}
is homotopic to the standard radial map \(x\mapsto x/|x|\). Hence the
induced boundary map has mod \(2\) degree one. By the relative degree
formula, the proper continuous map $\Psi:M^n\to\mathbb R^n$
has mod \(2\) topological degree one. This does not require orientability of
\(M\) or \(C^1\)-regularity of \(\Psi\) along \(\mathcal S\). Thus $\Psi$ is onto.

We next prove that \(\Psi\) is injective. Set
\begin{equation}
        A:=\mathbb R^n\setminus\Psi(\mathcal S),
        \qquad
        \mathcal U:=\Psi^{-1}(A),
\end{equation}
and note that $\mathcal U\subset M^n\setminus\mathcal S$.
Thus, the restricted map \(\Psi:\mathcal U\to A\) is a smooth local isometry.
We first observe that \(A\) is connected. Indeed, after the vanishing of the
Hessians, the functions \(y^i\) have uniformly bounded gradients with
respect to a fixed smooth background metric. Hence \(\Psi\) is 
Lipschitz on \(M^n\). Therefore
\begin{equation}
        \dim_{\mathcal H}\Psi(\mathcal S)
        \le
        \dim_{\mathcal H}\mathcal S
        <n-1 .
\end{equation}
By the dimension-theoretic separation theorem, \(\Psi(\mathcal S)\) does not
disconnect \(\mathbb R^n\). Thus $A=\mathbb R^n\setminus\Psi(\mathcal S)$
is connected.

We claim that the restricted map is a global isometry. To see this, recall that $\Psi:\mathcal U\to A$ is a proper local homeomorphism. Moreover, since \(\Psi\) is surjective, this restriction map is onto. Therefore it 
is a covering map. On the asymptotically flat end, the functions \(y^i\)
agree with the asymptotic coordinates up to lower-order terms. Hence, after
passing sufficiently far out in the end, \(\Psi\) is one-to-one and its image
contains the complement of a large compact set in \(\mathbb R^n\). Thus the
covering has one sheet near infinity. Since \(A\) is connected, the number of
sheets of the covering is constant. Consequently, $\Psi:\mathcal U\to A$
is a one-sheeted covering, and yields a global isometry.

It remains to rule out the possibility that two distinct points of $M^n$ map to a single point of
\(\Psi(\mathcal S)\). Let
\begin{equation}
        B:=M^n\setminus\mathcal U=\Psi^{-1}(\Psi(\mathcal S)).
\end{equation}
Note that \(B\) is closed since \(\Psi(\mathcal S)\) is compact, as it is the image of the compact set
\(\mathcal S\) and \(\Psi\) is continuous.  Moreover, $\dim_{\mathcal H} B<n-1$.
Indeed, \(B\cap\mathcal S\subset\mathcal S\), while on
\(M^n\setminus\mathcal S\) the map \(\Psi\) is a local diffeomorphism; hence,
locally on the regular set, the preimage of \(\Psi(\mathcal S)\) has the
same Hausdorff dimension as \(\Psi(\mathcal S)\), which is strictly less than
\(n-1\). Since \(g\) is uniformly equivalent to a smooth background metric, removing
a closed set of Hausdorff dimension strictly less than \(n-1\) does not
change the metric completion. Thus the metric completion of
\((\mathcal U,g)\) is \((M^n,d_g)\). Similarly, the metric completion of
\((A,\delta)\) is \((\mathbb R^n,d_\delta)\). The isometry $\Psi:\mathcal U\to A$
therefore extends uniquely to an isometry between the metric completions,
\begin{equation}\label{isometry}
        \Psi:(M^n,d_g)\to(\mathbb R^n,d_\delta).
\end{equation}
This extension agrees with the original continuous map \(\Psi\). This, of course, implies global injectivity,
since if \(p,q\in M^n\) satisfy \(\Psi(p)=\Psi(q)\), then $d_g(p,q)=d_\delta(\Psi(p),\Psi(q))=0$.
 
We now have that \(\Psi:M^n \rightarrow\mathbb{R}^n\) is a continuous bijection; in fact, as noted above $\Psi$ is Lipschitz with respect to the background metric $g_0$, and the same is true of its inverse. Moreover, the extended map
\eqref{isometry} is an isometry, so its inverse is also an isometry and is therefore continuous.
Hence \(\Psi\) is a homeomorphism, and
\begin{equation}
        \Psi|_{M^n\setminus\mathcal S}:
        M^n\setminus\mathcal S
        \longrightarrow
        \mathbb R^n\setminus\Psi(\mathcal S)
\end{equation}
is a smooth isometry.

The case of multiple ends may be treated with minor modifications.
We briefly indicate the steps here. If \(\mathcal E_0\) is the zero-mass end, then the harmonic
functions \(y^i\) may be chosen so that
\begin{equation}
        y^i-x^i\to0 \quad\text{on }\mathcal E_0,
        \qquad
        y^i\to0 \quad\text{on every other end}.
\end{equation}
Once it is established that \(|\nabla^2 y^i|_g=0\) on \(M^n\setminus\mathcal S\), the quantities
\(g(\nabla y^i,\nabla y^j)\) are constant on the connected set
\(M^n \setminus\mathcal S\). Their limits are \(\delta^{ij}\) on
\(\mathcal E_0\), but would be \(0\) on any other end, yielding a contradiction to the presence
of additional ends. Thus the multiple-end case reduces to the single-end case considered above.
\end{plainproof}

\begin{example}[Failure of smooth removability under the \(L^\infty\) hypothesis]
\label{example:flat-Linfty-nonsmooth}
The metric-space rigidity conclusion cannot in general be strengthened to
smooth removability of the singular set under only an \(L^\infty\) assumption
on the metric. To show this we provide an example.

Let $M^n=\mathbb R^n$ and $\mathcal S=\{0\}$.
Choose a smooth diffeomorphism
\begin{equation}
        \phi:S^{n-1}\to S^{n-1},
\end{equation}
which is isotopic to the identity but is not the restriction of a linear
orthogonal transformation. Let \(\phi_s\), \(s\in[0,1]\), be a smooth isotopy
from the identity to \(\phi\). Choose a smooth nonnegative cut-off function
\(\chi:[0,\infty)\to[0,1]\) such that \(\chi=1\) for \(r\le1\) and
\(\chi=0\) for \(r\ge2\). Define a map
\begin{equation}
        \Psi:\mathbb R^n\to\mathbb R^n
\end{equation}
by \(\Psi(0)=0\) and, for \(x=(r,\theta)\), \(r>0\), \(\theta\in S^{n-1}\),
\begin{equation}
        \Psi(r,\theta)=r\,\phi_{\chi(r)}(\theta).
\end{equation}
Then \(\Psi\) is a bi-Lipschitz homeomorphism of \(\mathbb R^n\), is a smooth
diffeomorphism on \(\mathbb R^n\setminus\{0\}\), and agrees with the identity
map outside a compact set. However, \(\Psi\) is not \(C^1\) at the origin:
if \(D\Psi(0)\) existed, then
\begin{equation}
        \lim_{r\to0}\frac{\Psi(r,\theta)}{r}=\phi(\theta)
\end{equation}
would have to be the restriction to \(S^{n-1}\) of a linear map, contrary to
the choice of \(\phi\).

Now define
\begin{equation}
        g:=\Psi^*\delta
        \quad\text{on }\mathbb R^n\setminus\{0\}.
\end{equation}
Then \(g\) is smooth and flat on \(M^n\setminus\mathcal S\), is uniformly
equivalent to the Euclidean metric, and hence defines an \(L^\infty\)-metric
on \(M^n\). Moreover, since \(\Psi=\mathrm{Id}\) outside a compact set, the
metric \(g\) is asymptotically flat with zero ADM mass, and
\begin{equation}
        R_g=0
        \quad\text{on }M^n\setminus\mathcal S.
\end{equation}
The singular set satisfies $\dim_{\mathcal M}\mathcal S=0$.
Furthermore,
\begin{equation}
        \Psi:
        (\mathbb R^n\setminus\{0\},g)
        \longrightarrow
        (\mathbb R^n\setminus\{0\},\delta)
\end{equation}
is a smooth isometry, and \(\Psi\) extends to a homeomorphism of the metric
completions.

Nevertheless, \(g\) does not extend smoothly across the origin in the
original smooth structure. Indeed, near the origin
\begin{equation}
        g=dr^2+r^2\phi^*g_{S^{n-1}} .
\end{equation}
If \(g\) extended smoothly, as a Riemannian metric at
the origin in the given smooth structure, the limiting inner product at
the origin would determine the standard round metric on infinitesimal
spheres. This would force
\begin{equation}
        \phi^*g_{S^{n-1}}=g_{S^{n-1}},
\end{equation}
so that \(\phi\) is an isometry of the round sphere, contradicting the choice
of \(\phi\). Thus the singularity is removable in the metric-space sense, but
not necessarily in the smooth sense.
\end{example}

\bibliographystyle{plain}
\bibliography{ref}

@unpublished{Wang-Wang-Xie,
	author = {Jian Wang and Jinmin Wang and Zhizhang Xie},
	eprint = {2606.21325},
	note = {arXiv:2606.21325},
	title = {${L}^\infty$-metrics on tori and Schoen's conjecture},
	url = {https://arxiv.org/pdf/2606.21325.pdf},
	year = {2026}}

@unpublished{Bi-Zhu,
	author = {Yuchen Bi and Jintian Zhu},
	eprint = {2606.20528},
	note = {arXiv:2606.20528},
	title = {Positive Scalar Curvature Obstructions via Singular Dimension Descent},
	url = {https://arxiv.org/pdf/2606.20528.pdf},
	year = {2026}}

@article{chodosh2024generalized,
  title={Generalized soap bubbles and the topology of manifolds with positive scalar curvature},
  author={Chodosh, Otis and Li, Chao},
  journal={Annals of Mathematics},
  volume={199},
  number={2},
  pages={707--740},
  year={2024},
  publisher={Department of Mathematics, Princeton University Princeton, New Jersey, USA}
}

@article {bartnik1986,
    AUTHOR = {Bartnik, Robert},
     TITLE = {The mass of an asymptotically flat manifold},
   JOURNAL = {Comm. Pure Appl. Math.},
  FJOURNAL = {Communications on Pure and Applied Mathematics},
    VOLUME = {39},
      YEAR = {1986},
    NUMBER = {5},
     PAGES = {661--693},
      ISSN = {0010-3640,1097-0312},
   MRCLASS = {58G30 (46E35 53C80 58D30 83C30)},
MRREVIEWER = {M.\ A. H. MacCallum},
       DOI = {10.1002/cpa.3160390505},
       URL = {https://doi.org/10.1002/cpa.3160390505},
}

@article {Chrusciel,
    AUTHOR = {Chru\'sciel, Piotr},
    TITLE = {Boundary conditions at spatial infinity from a {H}amiltonian
              point of view},
 BOOKTITLE = {Topological properties and global structure of space-time
              ({E}rice, 1985)},
    JOURNAL = {NATO Adv. Sci. Inst. Ser. B: Phys.,},
    VOLUME = {138},
    YEAR = {1986},
    PAGES = {49--59},   
}

@book{Lee,
  title={Geometric {R}elativity},
  author={Lee, Dan},
  volume={201},
  year={2021},
  publisher={American Mathematical Society}
}

@article{F,
	Author = {D. {Fischer-Colbrie}},
	Journal = {{Invent. Math.}},
	Pages = {121--132},
	Title = {{On complete minimal surfaces with finite Morse index in three manifolds.}},
	Volume = {82},
	Year = {1985}}

@article{Gromov1,
	Author = {Gromov, Misha},
	Journal = {Geometric and Functional Analysis},
	Number = {3},
	Pages = {645-726},
	Title = {Metric inequalities with scalar curvature},
	Volume = {28},
	Year = {2018}}

@book{Han,
	Author = {Han, Qing and Lin, Fanghua},
	Publisher = {American Mathematical Soc.},
	Title = {Elliptic partial differential equations},
	Volume = {1},
	Year = {2011}}

@book{GT,
	Author = {Gilbarg, David and Trudinger, Neil},
	Publisher = {Springer},
	Title = {Elliptic partial differential equations of second order},
	Year = {2015}}

@book{S,
	Author = {Stallings, John},
	Publisher = {Yale University Press},
	Title = {Group theory and three-dimensional manifolds},
	Year = {1972}}

@inproceedings{And,
	Author = {Anderson, Michael},
	Booktitle = {Annales scientifiques de l'{\'E}cole Normale Sup{\'e}rieure},
	Organization = {Elsevier},
	Pages = {89-105},
	Title = {Curvature estimates for minimal surfaces in $3 $-manifolds},
	Volume = {18},
	Year = {1985}}

@article{Donaldson-Sun,
 author = {Donaldson, Simon and Sun, Song},
 title = {Gromov-{Hausdorff} limits of {K{\"a}hler} manifolds and algebraic geometry},
 fjournal = {Acta Mathematica},
 journal = {Acta Math.},
 issn = {0001-5962},
 volume = {213},
 number = {1},
 pages = {63--106},
 year = {2014},
 language = {English},
 doi = {10.1007/s11511-014-0116-3},
 zbMATH = {6381225},
 Zbl = {1318.53037}
}

@unpublished{DaiWangWangWei2024RCD,
	author = {Xianzhe Dai and Changliang Wang and Lihe Wang and Guofang Wei},
	eprint = {2412.09185},
	note = {arXiv:2412.09185},
	title = {Singular metrics with nonnegative scalar curvature and RCD},
	year = {2024}}

@unpublished{BiHaoHeShiZhu26,
	author = {Yuchen Bi and Tianze Hao and Shihang He and Yuguang Shi and Jintian Zhu},
	eprint = {2603.02769},
	note = {arXiv:2603.02769},
	title = {A proof for the Riemannian positive mass theorem up to dimension 19},
	url = {https://arxiv.org/pdf/2603.02769.pdf},
	year = {2026}}

@unpublished{WangXie2024SphereSubsetsLinfty,
	author = {Jinmin Wang and Zhizhang Xie},
	eprint = {2407.21312},
	note = {arXiv:2407.21312},
	title = {Scalar curvature rigidity of spheres with subsets removed and {$L^\infty$} metrics},
	url = {https://arxiv.org/pdf/2407.21312.pdf},
	year = {2024}}

@article {MR657523,
    AUTHOR = {Gr\"uter, Michael and Widman, Kjell-Ove},
     TITLE = {The {G}reen function for uniformly elliptic equations},
   JOURNAL = {Manuscripta Math.},
  FJOURNAL = {Manuscripta Mathematica},
    VOLUME = {37},
      YEAR = {1982},
    NUMBER = {3},
     PAGES = {303--342},
      ISSN = {0025-2611,1432-1785},
   MRCLASS = {35J25 (31B35)},
  MRNUMBER = {657523},
MRREVIEWER = {V.\ M.\ Isakov},
       DOI = {10.1007/BF01166225},
       URL = {https://doi.org/10.1007/BF01166225},
}

@article {MR161019,
    AUTHOR = {Littman, W. and Stampacchia, G. and Weinberger, H. F.},
     TITLE = {Regular points for elliptic equations with discontinuous
              coefficients},
   JOURNAL = {Ann. Scuola Norm. Sup. Pisa Cl. Sci. (3)},
  FJOURNAL = {Annali della Scuola Normale Superiore di Pisa. Classe di
              Scienze. Serie III},
    VOLUME = {17},
      YEAR = {1963},
     PAGES = {43--77},
      ISSN = {0391-173X},
   MRCLASS = {35.42},
  MRNUMBER = {161019},
MRREVIEWER = {C.\ B.\ Morrey, Jr.},
}

@article{SchoenYau1979PMT,
  author  = {Schoen, Richard and Yau, Shing-Tung},
  title   = {On the proof of the positive mass conjecture in general relativity},
  journal = {Communications in Mathematical Physics},
  volume  = {65},
  number  = {1},
  year    = {1979},
  pages   = {45--76}
}

@article{SchoenYau1981PMTII,
  author  = {Schoen, Richard and Yau, Shing-Tung},
  title   = {Proof of the positive mass theorem. {II}},
  journal = {Communications in Mathematical Physics},
  volume  = {79},
  number  = {2},
  year    = {1981},
  pages   = {231--260}
}

@article{Witten1981,
  author  = {Witten, Edward},
  title   = {A new proof of the positive energy theorem},
  journal = {Communications in Mathematical Physics},
  volume  = {80},
  number  = {3},
  year    = {1981},
  pages   = {381--402}
}

@incollection{SchoenYau2019MinimalSingularities,
  author    = {Schoen, Richard and Yau, Shing-Tung},
  title     = {Positive scalar curvature and minimal hypersurface singularities},
  booktitle = {Surveys in Differential Geometry 2019. Differential Geometry, Calabi--Yau Theory, and General Relativity. Part 2},
  series    = {Surveys in Differential Geometry},
  volume    = {24},
  publisher = {International Press},
  address   = {Boston, MA},
  year      = {2022},
  pages     = {441--480},
  doi       = {10.4310/SDG.2019.v24.n1.a10}
}

@article{Miao2002,
  author  = {Miao, Pengzi},
  title   = {Positive mass theorem on manifolds admitting corners along a hypersurface},
  journal = {Advances in Theoretical and Mathematical Physics},
  volume  = {6},
  number  = {6},
  year    = {2002},
  pages   = {1163--1182},
  doi     = {10.4310/ATMP.2002.v6.n6.a4}
}

@article{McFeronSzekelyhidi2012,
  author  = {McFeron, Donovan and Sz{\'e}kelyhidi, G{\'a}bor},
  title   = {On the positive mass theorem for manifolds with corners},
  journal = {Communications in Mathematical Physics},
  volume  = {313},
  number  = {2},
  year    = {2012},
  pages   = {425--443},
  doi     = {10.1007/s00220-012-1498-8}
}

@article{Lee2013,
  author  = {Lee, Dan},
  title   = {A positive mass theorem for Lipschitz metrics with small singular sets},
  journal = {Proceedings of the American Mathematical Society},
  volume  = {141},
  number  = {11},
  year    = {2013},
  pages   = {3997--4004},
  doi     = {10.1090/S0002-9939-2013-11672-2}
}

@article{LeeLeFloch2015,
  author  = {Lee, Dan and LeFloch, Philippe},
  title   = {The positive mass theorem for manifolds with distributional curvature},
  journal = {Communications in Mathematical Physics},
  volume  = {339},
  number  = {1},
  year    = {2015},
  pages   = {99--120},
  doi     = {10.1007/s00220-015-2414-9}
}

@article{HirschMiao2020,
  author  = {Hirsch, Sven and Miao, Pengzi},
  title   = {A positive mass theorem for manifolds with boundary},
  journal = {Pacific Journal of Mathematics},
  volume  = {306},
  number  = {1},
  year    = {2020},
  pages   = {185--201},
  doi     = {10.2140/pjm.2020.306.185}
}

@article{MantoulidisMiaoTam2020,
  author  = {Mantoulidis, Christos and Miao, Pengzi and Tam, Luen-Fai},
  title   = {Capacity, quasi-local mass, and singular fill-ins},
  journal = {Journal f{\"u}r die reine und angewandte Mathematik},
  volume  = {768},
  year    = {2020},
  pages   = {55--92},
  doi     = {10.1515/crelle-2019-0040}
}

@article{LeeLesourdUnger2023,
  author  = {Lee, Dan and Lesourd, Martin and Unger, Ryan},
  title   = {Density and positive mass theorems for incomplete manifolds},
  journal = {Calculus of Variations and Partial Differential Equations},
  volume  = {62},
  year    = {2023},
  number  = {194},
  doi     = {10.1007/s00526-023-02516-4}
}

@article{LiMantoulidis2019,
  author  = {Li, Chao and Mantoulidis, Christos},
  title   = {Positive scalar curvature with skeleton singularities},
  journal = {Mathematische Annalen},
  volume  = {374},
  number  = {1--2},
  year    = {2019},
  pages   = {99--131},
  doi     = {10.1007/s00208-018-1753-1}
}

@article{Kazaras2024,
  author  = {Kazaras, Demetre},
  title   = {Desingularizing positive scalar curvature 4-manifolds},
  journal = {Mathematische Annalen},
  volume  = {390},
  year    = {2024},
  pages   = {4951--4972},
  doi     = {10.1007/s00208-024-02829-5}
}

@unpublished{CecchiniFrenckZeidler2024,
	author = {Simone Cecchini and Georg Frenck and Rudolf Zeidler},
	eprint = {2407.20163},
	note = {arXiv:2407.20163},
	title = {Positive scalar curvature with point singularities},
	url = {https://arxiv.org/pdf/2407.20163.pdf},
	year = {2024}}

@article{DaiSunWang2025,
  author  = {Dai, Xianzhe and Sun, Yukai and Wang, Changliang},
  title   = {The positive mass theorem for asymptotically flat manifolds with isolated conical singularities},
  journal = {Science China Mathematics},
  volume  = {68},
  year    = {2025},
  pages   = {1671--1686},
  doi     = {10.1007/s11425-024-2325-6}
}

@article{LesourdUngerYau2024,
  author  = {Lesourd, Martin and Unger, Ryan and Yau, Shing-Tung},
  title   = {The positive mass theorem with arbitrary ends},
  journal = {Journal of Differential Geometry},
  volume  = {128},
  number  = {1},
  year    = {2024},
  pages   = {257--293},
  doi     = {10.4310/jdg/1721075263}
}

@unpublished{ChodoshMantoulidisSchulzeWang2025,
	author = {Otis Chodosh and Christos Mantoulidis and Felix Schulze and Zhihan Wang},
	eprint = {2506.12852},
	note = {arXiv:2506.12852},
	title = {Generic regularity for minimizing hypersurfaces in dimension 11},
	url = {https://arxiv.org/pdf/2506.12852.pdf},
	year = {2025}}

@unpublished{BrendleWang2026,
	author = {Simon Brendle and Yipeng Wang},
	eprint = {2604.08473},
	note = {arXiv:2604.08473},
	title = {A dimension descent scheme for the positive mass theorem in arbitrary dimension},
	url = {https://arxiv.org/pdf/2604.08473.pdf},
	year = {2026}}

@unpublished{Lohkamp2016HigherDimensionalPMTII,
	author = {Joachim Lohkamp},
	eprint = {1612.07505},
	note = {arXiv:1612.07505},
	title = {The Higher Dimensional Positive Mass Theorem {II}},
	url = {https://arxiv.org/pdf/1612.07505.pdf},
	year = {2016}}

@unpublished{Lohkamp2006,
	author = {Joachim Lohkamp},
	eprint = {math/0608795},
	note = {arXiv:math/0608795},
	title = {The Higher Dimensional Positive Mass Theorem {I}},
	url = {https://arxiv.org/pdf/math/0608795.pdf},
	year = {2006}}

@unpublished{ChodoshMantoulidisSchulze2023,
	author = {Otis Chodosh and Christos Mantoulidis and Felix Schulze},
	eprint = {2302.02253},
	note = {arXiv:2302.02253},
	title = {Generic regularity for minimizing hypersurfaces in dimensions 9 and 10},
	url = {https://arxiv.org/pdf/2302.02253.pdf},
	year = {2023}}

@unpublished{BurkhardtGuim2024Smoothing,
	author = {Paula Burkhardt-Guim},
	eprint = {2406.04564},
	note = {arXiv:2406.04564},
	title = {Smoothing {$L^\infty$} {R}iemannian metrics with nonnegative scalar curvature outside of a singular set},
	url = {https://arxiv.org/pdf/2406.04564.pdf},
	year = {2024}}

@article{ShiTam2002,
  author  = {Shi, Yuguang and Tam, Luen-Fai},
  title   = {Positive mass theorem and the boundary behaviors of compact manifolds with nonnegative scalar curvature},
  journal = {Journal of Differential Geometry},
  volume  = {62},
  number  = {1},
  year    = {2002},
  pages   = {79--125},
  doi     = {10.4310/jdg/1090425530}
}

@article{Smale1993,
  author  = {Smale, Nathan},
  title   = {Generic regularity of homologically area minimizing hypersurfaces in eight-dimensional manifolds},
  journal = {Communications in Analysis and Geometry},
  volume  = {1},
  number  = {2},
  year    = {1993},
  pages   = {217--228},
  doi     = {10.4310/CAG.1993.v1.n2.a2}
}

@unpublished{BrendleWang2026Spacetime,
	author = {Simon Brendle and Yipeng Wang},
	eprint = {2604.18561},
	note = {arXiv:2604.18561},
	title = {On the spacetime positive energy theorem in arbitrary dimension},
	url = {https://arxiv.org/pdf/2604.18561.pdf},
	year = {2026}}

@unpublished{HirschKhuriLesourdZhang2026,
	author = {Sven Hirsch and Marcus Khuri and Martin Lesourd and Yiyue Zhang},
	eprint = {2604.24746},
	note = {arXiv:2604.24746},
	title = {The Hyperboloidal and Spacetime Positive Mass Theorem in All Dimensions},
	url = {https://arxiv.org/pdf/2604.24746.pdf},
	year = {2026}}

@unpublished{Tsang2026InitialDataArbitraryEnds,
	author = {Tin-Yau Tsang},
	eprint = {2604.26978},
	note = {arXiv:2604.26978},
	title = {Positive mass theorem for initial data sets with arbitrary ends},
	url = {https://arxiv.org/pdf/2604.26978.pdf},
	year = {2026}}

@article{Bray2001,
  author  = {Bray, Hubert},
  title   = {Proof of the {Riemannian} {Penrose} inequality using the positive mass theorem},
  journal = {Journal of Differential Geometry},
  volume  = {59},
  number  = {2},
  year    = {2001},
  pages   = {177--267},
  doi     = {10.4310/jdg/1090349428}
}

@article{ChengLeeTam2022,
  author  = {Cheng, Man-Chuen and Lee, Man-Chun and Tam, Luen-Fai},
  title   = {Singular metrics with negative scalar curvature},
  journal = {International Journal of Mathematics},
  volume  = {33},
  number  = {7},
  year    = {2022},
  pages   = {2250047},
  doi     = {10.1142/S0129167X22500471}
}

@unpublished{DaiSunWang2024ConicalPSC,
	author = {Xianzhe Dai and Yukai Sun and Changliang Wang},
	eprint = {2412.02941},
	note = {arXiv:2412.02941},
	title = {Positive scalar curvature and isolated conical singularity},
	url = {https://arxiv.org/pdf/2412.02941.pdf},
	year = {2024}}

@article{ShiWangWeiZhu2021FillIn,
  author        = {Shi, Yuguang and Wang, Wenlong and Wei, Guodong and Zhu, Jintian},
  title         = {On the fill-in of nonnegative scalar curvature metrics},
  journal       = {Mathematische Annalen},
  volume        = {379},
  number        = {1--2},
  pages         = {235--270},
  year          = {2021},
  doi           = {10.1007/s00208-020-02087-1},
  eprint        = {1907.12173},
  archivePrefix = {arXiv},
  primaryClass  = {math.DG}
}

@article{Zhu2021Width,
  author        = {Zhu, Jintian},
  title         = {Width estimate and doubly warped product},
  journal       = {Transactions of the American Mathematical Society},
  volume        = {374},
  number        = {2},
  pages         = {1497--1511},
  year          = {2021},
  doi           = {10.1090/tran/8263},
  eprint        = {2003.01315},
  archivePrefix = {arXiv},
  primaryClass  = {math.DG}
}

@article{NaberValtorta2019Varifolds,
  author  = {Naber, Aaron and Valtorta, Daniele},
  title   = {The singular structure and regularity of stationary and minimizing varifolds},
  journal = {Annals of Mathematics},
  volume  = {190},
  number  = {2},
  pages   = {413--565},
  year    = {2019},
  eprint  = {1505.03428},
  archivePrefix = {arXiv},
  primaryClass = {math.DG}
}

@article{Eichmair2009PlateauMOTS,
  author        = {Eichmair, Michael},
  title         = {The Plateau problem for marginally outer trapped surfaces},
  journal       = {Journal of Differential Geometry},
  volume        = {83},
  number        = {3},
  pages         = {551--583},
  year          = {2009},
  doi           = {10.4310/jdg/1264601035},
  eprint        = {0711.4139},
  archivePrefix = {arXiv},
  primaryClass  = {math.DG}
}

@article{CheegerNaber2013QuantitativeStratification,
  author  = {Cheeger, Jeff and Naber, Aaron},
  title   = {Quantitative stratification and the regularity of harmonic maps and minimal currents},
  journal = {Communications on Pure and Applied Mathematics},
  volume  = {66},
  number  = {6},
  pages   = {965--990},
  year    = {2013},
  doi     = {10.1002/cpa.21446}
}

@article{BartnikChrusciel2005DiracBoundary,
  author  = {Bartnik, Robert and Chru{\'s}ciel, Piotr },
  title   = {Boundary value problems for {Dirac}-type equations},
  journal = {Journal f{\"u}r die reine und angewandte Mathematik},
  volume  = {579},
  pages   = {13--73},
  year    = {2005},
  doi     = {10.1515/crll.2005.2005.579.13},
  eprint  = {math/0307278},
  archivePrefix = {arXiv}
}

@unpublished{MazurowskiYao2026ContinuousMetrics,
	author = {Liam Mazurowski and Xuan Yao},
	eprint = {2606.19123},
	note = {arXiv:2606.19123},
	title = {A Positive Mass Theorem for Continuous Metrics},
	url = {https://arxiv.org/pdf/2606.19123.pdf},
	year = {2026}}

@book{SchoenYau1994Lectures,
  author    = {Schoen, Richard and Yau, Shing-Tung},
  title     = {Lectures on Differential Geometry},
  publisher = {International Press},
  address   = {Boston},
  year      = {1994}
}

@book{HurewiczWallman1941DimensionTheory,
  author    = {Hurewicz, Witold and Wallman, Henry},
  title     = {Dimension Theory},
  series    = {Princeton Mathematical Series},
  volume    = {4},
  publisher = {Princeton University Press},
  address   = {Princeton},
  year      = {1941}
}

@book{Engelking1978DimensionTheory,
  author    = {Engelking, Ryszard},
  title     = {Dimension Theory},
  publisher = {North-Holland},
  address   = {Amsterdam},
  year      = {1978}
}

@article {LeeTam,
    AUTHOR = {Lee, Man-Chun and Tam, Luen-Fai},
     TITLE = {Continuous metrics and a conjecture of {S}choen},
   JOURNAL = {Trans. Amer. Math. Soc.},
  FJOURNAL = {Transactions of the American Mathematical Society},
    VOLUME = {378},
      YEAR = {2025},
    NUMBER = {3},
     PAGES = {1531--1550},
      ISSN = {0002-9947,1088-6850},
   MRCLASS = {53C18 (53C20)},
  MRNUMBER = {4866343},
MRREVIEWER = {Huaiyu\ Zhang},
       DOI = {10.1090/tran/9379},
       URL = {https://doi-org.proxy.library.stonybrook.edu/10.1090/tran/9379},
}

@article {JSZ,
    AUTHOR = {Jiang, Wenshuai and Sheng, Weimin and Zhang, Huaiyu},
     TITLE = {Removable singularity of positive mass theorem with continuous
              metrics},
   JOURNAL = {Math. Z.},
  FJOURNAL = {Mathematische Zeitschrift},
    VOLUME = {302},
      YEAR = {2022},
    NUMBER = {2},
     PAGES = {839--874},
      ISSN = {0025-5874,1432-1823},
   MRCLASS = {53C20},
  MRNUMBER = {4480213},
MRREVIEWER = {David\ J.\ Wraith},
       DOI = {10.1007/s00209-022-03081-w},
       URL = {https://doi-org.proxy.library.stonybrook.edu/10.1007/s00209-022-03081-w},
}

\end{document}